\def\A{\mathcal{A}}
\def\C{\mathcal{C}}
\def\E{\mathcal{E}}
\def\F{\mathcal{F}}
\def\K{{\mathcal{K}}}
\def\M{\mathcal{M}}
\def\N{\mathbb{N}}
\def\R{\mathbb{R}}
\def\U{\mathcal{U}}
\def\W{{\mathcal{W}}}
\def\Z{{\mathbb{Z}}}
\theoremstyle{plain}
\newtheorem{thm}{Theorem}[section]
\newtheorem{lem}{Lemma}[section]
\newtheorem{prop}{Proposition}[section]
\theoremstyle{definition}
\newtheorem{assmp}{Assumption}[section]
\newtheorem{dfn}{Definition}[section]
\newtheorem{exmp}{Example}[section]
\newtheorem{obsv}{Observation}[section]
\newtheorem{rem}{Remark}[section]
\numberwithin{equation}{section}
\title{Relaxed Large Economies with Infinite-\hspace{0pt}Dimensional Commodity Spaces: The Existence of Walrasian Equilibria\thanks{This research was conducted while Khan held the position of Visiting Research Fellow at the Australian National University, February 15--April 15, 2016, and he thanks the department for its hospitality, and Patrick Beissner and Bogdan Klishchuk for stimulating conversation. A preliminary version of the paper was presented at the ``Fifth Asian Conference on Nonlinear Analysis and Optimization'',  Niigata, Japan on August 4, 2016. The final  version was completed during Sagara's visit to Johns Hopkins University, August 13--21, 2016; it has benefitted from the careful reading and helpful comments of two anonymous referees. This research is supported by JSPS KAKENHI Grant No.\,26380246 from the Ministry of Education, Culture, Sports, Science and Technology, Japan.}}
\date{\today}
\author{M. Ali Khan \\
{\small Department of Economics, The Johns Hopkins University} \\
{\small Baltimore, MD 21218, United States} \\
{\small e-mail: akhan@jhu.edu}
\and \\
Nobusumi Sagara\thanks{Corresponding author.} \\
{\small Department of Economics, Hosei University} \\
{\small 4342, Aihara, Machida, Tokyo 194--0298, Japan} \\
{\small e-mail: nsagara@hosei.ac.jp}}
\begin{document}
\maketitle
\setcounter{page}{0}
\thispagestyle{empty}
\clearpage

\begin{abstract} 
\noindent
Whereas \lq\lq convexification by aggregation'' is a well-understood procedure in mathematical economics,  \lq\lq convexification by
 randomization" has largely been limited to theories of statistical decision-making, optimal control and  non-cooperative games.  In this paper, in the context of classical Walrasian general equilibrium theory, we offer a comprehensive treatment of {\it relaxed economies} and their {\it relaxed Walrasian equilibria}:
our results pertain to a setting  with a  finite or 
a continuum of agents, and a continuum of commodities modeled either as an ordered separable Banach space or as an $L^\infty$-space. 
As a substantive consequence, we demonstrate that the convexity hypothesis
 can be removed from the original large economy under the saturation hypothesis, and that existing results in the antecedent literature can be 
effortlessly  recovered.
\end{abstract}

\noindent\textbf{Keywords:} Relaxed large economy; Walrasian equilibrium; Saturated measure space; Lyapunov convexity theorem; Purification principle; Relaxed control. \\

\noindent \textbf{JEL classification:} C62, D41, D51.
\tableofcontents

\section{Introduction}
Large economies were  introduced by \citet{au66}  as a prototype of perfect competition, and he demonstrated the existence of a Walrasian equilibrium  in the setting of a finite-dimensional commodity space  and a continuum of agents modeled as a nonatomic finite measure space. In his theorem, Aumann dispensed with the assumption of  convexity of preferences: an insight rather   remarkable for its time.  Since then, there have been several attempts to extend the theorem  to a setting with an infinite-dimensional commodity space, and they have all stumbled on the  well-known failure of the Lyapunov convexity theorem in infinite dimensions, and thereby failed to subdue the possible nonconvexity of the aggregate demand set. This is the reason why convexity assumptions on preferences are pervasive and inevitable even under the nonatomicity hypothesis on the set of agents,  as in  \cite{ky91,no97} for ordered separable Banach spaces,  and in  \cite{be91} for $l^\infty.$  This has led to efforts to strengthen the hypothesis along the intuition that there be ``many more agents than commodities":  in addition to    \cite{mdr03,ry91}, see  \cite{po97} for its formalization of the intuition as a condition on the nonatomic disintegration of the population measure of agents. 

However, recent work has established the  validity of  the Lyapunov convexity theorem, and as its corollary, the convexity property of the integral of a multifunction,    under the  reasonable assumption that the underlying  measure space of agents is  a {\it saturated} space. Both  separable Banach spaces and the  dual of separable Banach spaces have been considered, and indeed, this saturation property has been shown to be both  necessary and sufficient for the results; see \cite{ks13,ks15,ks16,po08,sy08}. It is thus natural then that one looks  for a generalization of Aumann's theorem under the assumption of a saturated measure space of agents by exploiting  the  (exact) convexity of the 
aggregate  demand set in the setting of an infinite-dimensional commodity space even   when the individual demand sets are not convex.  Such a result has now been satisfactorily executed by  \citet{le13}, and he also handles the difficulty arising from the failure of the joint continuity of the valuation functional.\footnote{As is by now well-known and well-understood, this was the third mathematical difficulty emphasized by  \citet[pp.\,224--225]{be91} when \lq\lq there are both infinitely many commodities and infinitely many participants in an economy.''  The other two difficulties that were identified concerned the facts that  (i)  \lq\lq there does not exist an infinite dimensional version of Fatou's lemma", and that (ii) \lq\lq budget sets in $L^\infty$ are not typically norm bounded and hence weak-star compact, even when they are defined by price systems in $L^1$.'' For a comprehensive discussion of these issues in the context of economies with a finite set of agents, as well as additional difficulties arising from the non-emptiness of the interior of the positive cone of a commodity space, see \citet{mz91}. \label{fn:diff} } However, it bears emphasis that  the result in \cite{le13} does not apply to economies  with a finite set of agents, but only to those with a continuum.\footnote{To be precise, the result applies to economies with atoms only under the convexity hypothesis on their preferences.}

In this paper, we take an approach alternative  to \lq\lq convexification by aggregation", by drawing on \lq\lq convexification  by randomization'',  a completely different operation from aggregation, and one that is also valid for economies with a finite set of agents. This procedure, and its terminology of \lq\lq relaxed controls", is  well-established in optimal control theory, and explored in, by now classical, work  of \citet{mc67,wa72,yo69}. To be sure, this  is  not an altogether novel direction, and it has been pursued with varying emphases in \citet{pt84}, \citet{no92}, and \citet{ba08}.\footnote{\citet{pt84} have incentive compatibility in economies with a finite set of agents as  their basic thrust; \citet{no92} is concerned with correlated equilibria; and \citet{ba08} is after a synthesis of  the Nash mixed-strategy theorem in games with a continuum of agents and of 
 general equilibrium theory with externalities and price-dependent preferences. He assumes, as he must, the hypotheses of convexity or single-valuedness of individual demands; but see Footnote \ref{fn1} below. \label{fn0}}   This work takes up the notion of  ``relaxation'' of the relevant solution concept by focusing on a randomized choice by the optimizing agent.  As specifically illustrated in \cite{pt84}, this randomization device is recognized to be more or less artificial, but that it copes with the nonconvex constraints which stem from idiosyncratic shocks to each agent.\footnote{Contrary to \cite{cs83}, randomness under consideration is different from extrinsic uncertainty that is unrelated to preferences and endowments of an economy, which results in state-dependent equilibria under the convexity hypothesis.} In this paper, our sole concern is Walrasian general equilibrium theory, and more specifically, on Aumann's theorem with non-convex preferences, and we  incorporate the refinements of the relaxation technique as offered in \cite{sa16}. 

The procedure for the relaxation of economies that is pursued in this paper is as follows.
\begin{itemize}
\item The preferences of each agent possesses a utility representation on a common consumption set $X$ formalized as a 
Polish subset of a Banach space. 
\item Utility functions on $X$ are then extended to the set $\Pi(X)$ of probability measures on $X$, and are thereby  an affine extension on $\Pi(X)$. Extended preferences on $\Pi(X)$ are consistent with the expected utility hypothesis. 
\item Each probability measure in $\Pi(X)$ is regarded as a randomized commodity (a lottery) over $X$. Given a market price, each agent can purchase the barycenter of a probability measure, which is a convex combination of commodities in $X$ with respect to a probability measure in $\Pi(X)$ under the budget constraint.   
\item Barycentric commodities under the budget constraint, so to speak, are evaluated in terms of  expected utilities and constitute the relaxed demand set of each agent. It  is thereby defined as a closed convex subset of $\Pi(X)$.\footnote{Since the preferences of each agent are not assumed to be convex, it should be noted that the individual demand set for the original economy lacks  convexity.} 
\item Relaxed allocations are well-defined in a way that the aggregate of the barycentric commodities of each agent does not exceed the total endowment. Relaxed Walrasian equilibria for relaxed economies are thereby formulated in a consistent manner. 
\item  Dirac measures in $\Pi(X)$  reduce to the usual notion of a commodity in $X,$ and consequently, Walrasian equilibria for the original economy are identified with relaxed (purified) Walrasian equilibria for the relaxed economy. 
\end{itemize}

We then offer for classical  Walrasian general equilibrium theory a supplementation to the usual convexification  method: under the saturation hypothesis, we can always construct a Walrasian equilibrium for the original economy from a relaxed Walrasian equilibrium for the relaxed economy. The point is that the existence of the latter is easily established on account of the fact that the relaxed economy is {\it already} a convexification of the original economy. This is done through a \lq\lq purification principle", as in  \cite{ks14b,sa16},  a powerful tool whose utility in 
non-cooperative game theory and statistical decision theory is well-understood.       As to the difficulty of  the joint continuity of the valuation functional for  price-commodity pairs in infinite dimensions when one attempts to apply the fixed point theorem,\footnote{We note the use of   the infinite-dimensional version of the Gale--Nikaido lemma and Michael's selection theorem in  \citet{ya85}. We also note that this difficulty of joint continuity of the valuation functional does not arise when one is working in a finite-dimensional space. Thus, in \cite{ba08}, even though the commodity space of probability measures on the finite-dimensional commodity space is infinite dimensional, the valuation functional is jointly continuous by virtue of the price space being a finite-dimensional Euclidean space.}  we adapt the technique employed in \cite{le13,po97} to relaxed demand sets. In short summary, we can remove the convexity hypothesis  from \cite{ky91}, and recover   the existence result in \cite{le13,po97,ry91} under the saturation hypothesis, all  in the context of a separable Banach space. Furthermore, in $L^\infty$-spaces, we can also remove the convexity hypothesis from \cite{be91} and  derive the existence of Walrasian equilibria with free disposal under the saturation hypothesis. 
 
As applications, we offer two examples. We illustrate how our existence result yields the existence of Pareto optimal, envy-free allocations in large economies with infinite-dimensional commodity spaces; this sharpens the classical result of \cite{va74}. Curiously, envy-freeness is naturally interpreted as incentive compatibility in random economies where each agent incurs an idiosyncratic shock that  characterizes his/her type along the lines of \cite{pt84}. We also demonstrate that our existence result is valid for economies with indivisible commodities along the lines of \cite{ky81}; this presents an alternative approach to the existence result on economies with indivisible commodities investigated in \cite{dkm01} and \cite{sv84} via the alternative convexification technique.

\section{Preliminaries}  We develop four  ideas in three subsections,: relaxed controls in optimal control theory, the purification principle now also pervasive in application to the theory of non-atomic games, the saturation property and Gelfand integration of functions taking values in the space of essentially bounded measurable functions.

\subsection{Relaxed Controls}
We denote by $\Pi(X)$ the set of probability measures on a Polish space $X$ furnished with the Borel $\sigma$-algebra $\mathrm{Borel}(X)$. We endow $\Pi(X)$ with the \textit{topology of weak convergence} of probability measures, which is the coarsest topology on $\Pi(X)$ for which the integral functional $P\mapsto \int v dP$ on $\Pi(X)$ is continuous for every bounded continuous function $v:X\to \R$. Then $\Pi(X)$ is also a Polish space; see \citet[Theorem 15.15]{ab06}. Let $(T,\Sigma,\mu)$ be a finite measure space. (Throughout the paper, we always assume that it is complete.) By $\M(T,X)$ we denote the space of measurable functions from $T$ to $X$ and by $\mathcal{R}(T,X)$ the space of measurable functions from $T$ to $\Pi(X)$. Each element in $\M(T,X)$ is called a \textit{control} and that in $\mathcal{R}(T,X)$ is called a \textit{relaxed control} (a \textit{Young measure}, a \textit{stochastic kernel}, or a \textit{transition probability}), which is a probability measure-valued control. For every function $\lambda:T\to \Pi(X)$, the real-\hspace{0pt}valued function $t\mapsto \lambda(t)(C)$ is measurable for every $C\in \mathrm{Borel}(X)$ if and only if $\lambda$ is measurable; see \citet[Lemma 2]{po09}. By $\Delta(X)$, we denote the set of Dirac measures on $X$, i.e., $\delta_x\in \Delta(X)$ whenever for every $C\in \mathrm{Borel}(X)$: $\delta_x(C)=1$ if $x\in C$ and $\delta_x(C)=0$ otherwise. Each control $f\in \M(T,X)$ is identified with the Dirac measure valued control $\delta_{f(\cdot)}\in \mathcal{R}(T,X)$ satisfying $\delta_{f(t)}\in \Delta(X)$ for every $t\in T$.

A real-valued function $u:T\times X\to \R$ is a \textit{Carath\'eodory function} if $t\mapsto u(t,x)$ is measurable for every $x\in X$ and $x\mapsto u(t,x)$ is continuous for every $t\in T$. The Carath\'eodory function $u$ is jointly measurable; see \citet[Lemma 4.51]{ab06}. A Carath\'eodory function $u$ is \textit{integrably bounded} if there exists $\varphi\in L^1(\mu)$ such that $|u(t,x)|\le \varphi(t)$ for every $(t,x)\in T\times X$. Denote by $\C^1(T\times X,\mu)$ the space of integrably bounded Carath\'eodory functions on $T\times X$. For each $u\in \C^1(T\times X,\mu)$, define the integral functional $I_u:\mathcal{R}(T,X)\to \R$ by $I_u(\lambda)=\iint u(t,x)\lambda(t,dx)d\mu$. The \textit{weak topology} on $\mathcal{R}(T,X)$ is defined as the coarsest topology for which every integral functionals $I_u$ is continuous for every $u\in \C^1(T\times X,\mu)$. If $T$ is a singleton, then the set $\mathcal{R}(T,X)$ coincides with the set $\Pi(X)$. In this case $\C^1(T\times X,\mu)$ coincides with the space $C_b(X)$ of bounded continuous functions on $X$ and the weak topology of $\mathcal{R}(T,X)$ is the topology of weak convergence of probability measures in $\Pi(X)$. Denote by $\overline{\K}^{\,\mathit{w}}$ the weak closure of $\K\subset \mathcal{R}(T,X)$.

\subsection{The Purification Principle in Saturated Measure Spaces} 
A finite measure space $(T,\Sigma,\mu)$ is said to be \textit{essentially countably generated} if its $\sigma$-\hspace{0pt}algebra can be generated by a countable number of subsets together with the null sets; $(T,\Sigma,\mu)$ is said to be \textit{essentially uncountably generated} whenever it is not essentially countably generated. Let $\Sigma_S=\{ A\cap S\mid A\in \Sigma \}$ be the $\sigma$-\hspace{0pt}algebra restricted to $S\in \Sigma$. Denote by $L^1_S(\mu)$ the space of $\mu$-integrable functions on the measurable space $(S,\Sigma_S)$ whose element is identified with a restriction of a function in $L^1(\mu)$ to $S$. An equivalence relation $\sim$ on $\Sigma_S$ is given by $A\sim B \Leftrightarrow \mu(A\triangle B)=0$, where $A\triangle B$ is the symmetric difference of $A$ and $B$ in $\Sigma$. The collection of equivalence classes is denoted by $\Sigma(\mu)=\Sigma/\sim$ and its generic element $\widehat{A}$ is the equivalence class of $A\in \Sigma$. We define the metric $\rho$ on $\Sigma(\mu)$ by $\rho(\widehat{A},\widehat{B})=\mu(A\triangle B)$. Then $(\Sigma(\mu),\rho)$ is a complete metric space (see \citet[Lemma 13.13]{ab06} or \citet[Lemma III.7.1]{ds58}) and $(\Sigma(\mu),\rho)$ is separable if and only if $L^1(\mu)$ is separable (see \citet[Lemma 13.14]{ab06}). The \textit{density} of $(\Sigma(\mu),\rho)$ is the smallest cardinal number of the form $|\U|$, where $\U$ is a dense subset of $\Sigma(\mu)$. 

\begin{dfn}
A finite measure space $(T,\Sigma,\mu)$ is \textit{saturated} if $L^1_S(\mu)$ is nonseparable for every $S\in \Sigma$ with $\mu(S)>0$. We say that a finite measure space has the \textit{saturation property} if it is saturated.
\end{dfn}

Saturation implies nonatomicity and several equivalent definitions for saturation are known; see \cite{fk02,fr12,hk84,ks09}. One of the simple characterizations of the saturation property is as follows. A finite measure space $(T,\Sigma,\mu)$ is saturated if and only if $(S,\Sigma_S,\mu)$ is essentially uncountably generated for every $S\in \Sigma$ with $\mu(S)>0$. The saturation of finite measure spaces is also synonymous with the uncountability of the density of $\Sigma_S(\mu)$ for every $S\in \Sigma$ with $\mu(S)>0$; see \citet[331Y(e)]{fr12}. An germinal notion of saturation already appeared in \cite{ka44,ma42}. The significance of the saturation property lies in the fact that it is necessary and sufficient for the weak compactness and the convexity of the Bochner integral of a multifunction as well as the Lyapunov convexity theorem in Banach spaces; see \cite{ks13,ks15,ks16,po08,sy08}. 

Let $E$ be a Banach space and $L^1(\mu,E)$ be the space of Bochner integrable functions from $T$ to $E$. We say that a function $\Phi:T\times X\to E$ is \textit{integrably bounded} if there exists $\varphi\in L^1(\mu)$ such that $\| \Phi(t,x) \|\le \varphi(t)$ for every $(t,x)\in T\times X$. Hence, $\Phi(\cdot,x)\in L^1(\mu,E)$ for every $x\in X$ whenever $\Phi$ is integrably bounded and measurable, and $E$ is separable. Except for Subsections \ref{subsec3} and \ref{subsec4}, and Section \ref{appdx2}, the integration of $E$-valued functions with respect to the finite measure $\mu$ and probability measures in $\Pi(X)$ is always supposed to be in the Bochner sense. 

The following result is an immediate consequence of \citet[Theorem 5.1]{ks14b}, whose proof hinges on the Lyapunov convexity theorem in separable Banach spaces obtained in \cite{ks13} under the saturation hypothesis.

\begin{prop}[purification principle]
\label{PP1}
Let $(T,\Sigma,\mu)$ be a saturated finite measure space, $E$ be a separable Banach space, and $X$ be a compact Polish space. If $\Phi:T\times X\to E$ is an integrably bounded measurable function such that $\Phi(t,\cdot):X\to E$ is continuous in the weak topology of $E$ for every $t\in T$ and $U:T\twoheadrightarrow X$ is a multifunction with $\mathrm{gph}\,U\in \Sigma\otimes \mathrm{Borel}(X)$, then for every $\lambda\in \mathcal{R}(T,X)$ with $\lambda(t)(U(t))=1$ a.e.\ $t\in T$, there exists $f\in \M(T,X)$ with $f(t)\in U(t)$ a.e. $t\in T$ such that
$$
\int_T\int_X\Phi(t,x)\lambda(t,dx)d\mu=\int_T\Phi(t,f(t))d\mu.
$$
\end{prop}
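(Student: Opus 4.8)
The plan is to reduce the statement to the Lyapunov-type convexity theorem for Bochner integrals of multifunctions in separable Banach spaces, which holds precisely because $(T,\Sigma,\mu)$ is saturated (this is the content of \cite{ks13}, and of the purification in \cite{ks14b}); the bound $\|\Phi(t,x)\|\le\varphi(t)$ with $\varphi\in L^1(\mu)$ keeps every integral below well-defined. First I would introduce the \emph{barycentric function} $g:T\to E$ given by $g(t)=\int_X\Phi(t,x)\lambda(t,dx)$, the Bochner barycenter of the image measure. For each $t$ this is well-defined: $\Phi(t,\cdot)$ is weakly continuous on the Polish space $X$ and $E$ is separable, so $x\mapsto\Phi(t,x)$ is strongly measurable and bounded by $\varphi(t)$, hence $\lambda(t)$-Bochner integrable. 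The map $g$ is itself strongly measurable: for each $e^\ast\in E^\ast$ the scalar map $t\mapsto\langle g(t),e^\ast\rangle=\int_X\langle\Phi(t,x),e^\ast\rangle\lambda(t,dx)$ is measurable because it integrates a Carath\'eodory function against the measurable transition probability $\lambda$, and weak measurability yields strong measurability by separability of $E$. Thus $g\in L^1(\mu,E)$ with $\|g(t)\|\le\varphi(t)$, and since $\lambda(t)(U(t))=1$ a.e., the barycenter lies in the weakly closed convex hull $g(t)\in\overline{\mathrm{co}}^{\,\mathit{w}}\,\Phi(t,U(t))$ for a.e.\ $t$.

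Second, I would pass to the multifunction $G:T\twoheadrightarrow E$, $G(t)=\Phi(t,U(t))$. Its graph is the projection to $T\times E$ of $\{(t,x,e):(t,x)\in\mathrm{gph}\,U,\ e=\Phi(t,x)\}$, a set in $\Sigma\otimes\mathrm{Borel}(X)\otimes\mathrm{Borel}(E)$; by the projection theorem and the completeness of $\mu$, $\mathrm{gph}\,G$ is measurable, so $G$ is an integrably bounded measurable multifunction. Invoking the convexity theorem under saturation, the Aumann integral $\int_T G\,d\mu$ is convex and weakly compact, and $\int_T G\,d\mu=\int_T\overline{\mathrm{co}}^{\,\mathit{w}}\,G\,d\mu$. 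Because $g$ is an integrable selection of $\overline{\mathrm{co}}^{\,\mathit{w}}\,G$, we get $\int_T g\,d\mu\in\int_T\overline{\mathrm{co}}^{\,\mathit{w}}\,G\,d\mu=\int_T G\,d\mu$, so by the definition of the Aumann integral there is a measurable selection $h$ of $G$, i.e.\ $h(t)\in\Phi(t,U(t))$ a.e., with $\int_T h\,d\mu=\int_T g\,d\mu=\int_T\int_X\Phi(t,x)\lambda(t,dx)d\mu$.

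Third comes the lifting step, recovering $f$ from $h$. I would set $V(t)=\{x\in U(t):\Phi(t,x)=h(t)\}$, which is nonempty for a.e.\ $t$ by construction of $h$. Its graph $\{(t,x):x\in U(t),\ \Phi(t,x)=h(t)\}$ belongs to $\Sigma\otimes\mathrm{Borel}(X)$, since $\mathrm{gph}\,U$ is measurable and $(t,x)\mapsto\Phi(t,x)-h(t)$ is jointly measurable with $h$ measurable. By the Aumann--von Neumann measurable selection theorem, for which the completeness of $(T,\Sigma,\mu)$ is used, $V$ admits a measurable selection $f\in\M(T,X)$. Then $f(t)\in U(t)$ and $\Phi(t,f(t))=h(t)$ a.e., giving $\int_T\Phi(t,f(t))d\mu=\int_T h\,d\mu$, the required identity.

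The step I expect to be the crux is the second one: ensuring that the barycenter $g$, which a priori is only a selection of the \emph{convexified} multifunction $\overline{\mathrm{co}}^{\,\mathit{w}}\,G$, is nonetheless realized as $\int_T\Phi(\cdot,f(\cdot))\,d\mu$ for an honest selection $f(t)\in U(t)$. This is exactly where saturation is indispensable---without it $\int_T G\,d\mu$ need not be convex and its equality with $\int_T\overline{\mathrm{co}}^{\,\mathit{w}}\,G\,d\mu$ can fail, so the purification breaks down. The surrounding measurability bookkeeping (strong measurability of $g$, and measurability of $\mathrm{gph}\,G$ and $\mathrm{gph}\,V$) is routine but leans essentially on the separability of $E$, the Polish structure of $X$, and the completeness of $\mu$.
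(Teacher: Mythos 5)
The paper gives no self-contained proof of Proposition \ref{PP1}: it is quoted as an immediate consequence of Theorem 5.1 of \cite{ks14b}, so your attempt is in effect a reconstruction of that cited proof. Your skeleton (barycenter $g$, Aumann integral of $G(t)=\Phi(t,U(t))$, Filippov-type selection) is the natural one, and your first and third steps are sound. The gap is exactly at the step you yourself identify as the crux: the equality $\int_T G\,d\mu=\int_T\overline{\mathrm{co}}^{\,\mathit{w}}G\,d\mu$ is false for the class of multifunctions you are handling. The saturation-based convexity and compactness theorems of \cite{ks13,po08,sy08} require closed (indeed weakly compact) values, whereas $G(t)=\Phi(t,U(t))$ need not be closed, because $U$ is only assumed to have a measurable graph. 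Concretely, take any finite measure space $(T,\Sigma,\mu)$ with $\mu(T)>0$, $E=\R$, $X=[0,1]$, $U(t)=(0,1]$, and $\Phi(t,x)=x$: then $\int_T G\,d\mu=(0,\mu(T)]$ while $\int_T\overline{\mathrm{co}}^{\,\mathit{w}}G\,d\mu=[0,\mu(T)]$, so the inclusion $\int_T\overline{\mathrm{co}}^{\,\mathit{w}}G\,d\mu\subset\int_T G\,d\mu$ that your argument needs fails (saturated or not). Proposition \ref{PP1} itself is not contradicted here, since every admissible barycenter satisfies $g(t)>0$ a.e.; the point is that passing from $\lambda$ to an arbitrary selection of $\overline{\mathrm{co}}^{\,\mathit{w}}G$ discards precisely the information that $\lambda(t)$ puts \emph{full mass on} $U(t)$. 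Nor is the gap patchable by taking closures: replacing $U(t)$ by $\overline{U(t)}$ makes $G$ weakly compact valued, but then the selection you recover only satisfies $f(t)\in\overline{U(t)}$, not $f(t)\in U(t)$ as the proposition demands.

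The proof behind the paper's citation avoids this loss by never leaving the level of relaxed controls: the Lyapunov machinery under saturation is applied to the convex set $\{\sigma\in\mathcal{R}(T,X)\mid \sigma(t)(U(t))=1 \text{ a.e.}\}$, equivalently to vector measures carried by $\mathrm{gph}\,U$, whose extreme points are the Dirac-valued controls $\delta_{f(\cdot)}$ with $f(t)\in U(t)$ a.e. --- and here the constraint is preserved \emph{exactly}, since $\delta_x(U(t))=1$ if and only if $x\in U(t)$, with no closure operation intervening. A secondary, more repairable, flaw in your write-up: the projection theorem does not put $\mathrm{gph}\,G$ in $\Sigma\otimes\mathrm{Borel}(E)$, because the projection from $T\times X\times E$ onto $T\times E$ is not a projection onto a space carrying a complete measure; such projections are in general only analytic over $\Sigma\otimes\mathrm{Borel}(E)$. (One can salvage Effros-type measurability of $G$ by projecting sets of the form $\{(t,x)\in\mathrm{gph}\,U\mid \Phi(t,x)\in V\}$ all the way down to $T$, where completeness of $\mu$ does apply, but this does not rescue the main step.)
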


A control-theoretic interpretation of Proposition \ref{PP1} means that any ``relaxed'' control system $t\mapsto\hat{\Phi}(t,\lambda(t)):=\int\Phi(t,x)\lambda(t,dx)$ operated by $\lambda\in \mathcal{R}(T,X)$ consistent with the control set $U(t)$ is realized by adopting a ``purified'' control system $t\mapsto\Phi(t,f(t))$ operated by $f\in \M(T,X)$ with the feasibility constraint $f(t)\in U(t)$ in such a way that its Bochner integral over $T$ is preserved with $\int\hat{\Phi}(t,\lambda(t))d\mu=\int\Phi(t,f(t))d\mu$. An application of Proposition \ref{PP1} to nonconvex variational problems with infinite-dimensional control systems is explored in \cite{ks14b}. 

\begin{rem}
\label{rem1}
For the case with $E=\R^n$, Proposition \ref{PP1} holds under the nonatomicity hypothesis, which is a well-known result in control theory attributed to \citet[Theorem IV.3.14]{wa72}; see also \citet[Theorem 2.5]{al72}. In particular, when $X$ is a finite or countably infinite set, Warga's result corresponds to the classical result of \citet{dww51a}; see also \cite{kr09,krs06}. The case for $E=\R^\N$ with $X$ a compact Polish space is covered in \citet[Theorem 2.2]{ls06}, \citet[Theorem 2.2]{ls09}, and \citet[Theorem 2]{po09} under the saturation hypothesis. As well as applications in optimal control theory along the lines of \cite{al72,ba84,bl73,ks14b,sa16,sb78,wa72}, the purification principle of this type also justifies the elimination of randomness in statistical decision theory as in   \cite{ba85,dww51b,fp06,gh05}, and the purification of mixed strategies for games with incomplete information with finite players, as in \cite{ast13,kr09,krs06,ls06,mw85,no14,rr82}. 
\end{rem}

\subsection{Gelfand Integrals in $L^\infty$}
\label{subsec3}
Let $(\Omega,\F,\nu)$ be a $\sigma$-finite measure space. A function $f:T\to L^\infty(\nu)$ is \textit{weakly$^*\!$ scalarly measurable} if the scalar function $\langle \varphi,f(\cdot) \rangle$ on $T$ is measurable for every $\varphi\in L^1(\nu)$, where the duality between $L^1(\nu)$ and $L^\infty(\nu)$ is given by $\langle \varphi,\psi \rangle=\int\varphi\psi d\nu$ for $\varphi\in L^1(\nu)$ and $\psi\in L^\infty(\nu)$. We say that weakly$^*\!$ scalarly measurable functions $f$ and $g$ are \textit{weakly$^*\!$ scalarly equivalent} if $\langle \varphi,f(t)-g(t) \rangle=0$ for every $\varphi\in L^1(\nu)$ a.e.\ $t\in T$ (the exceptional $\mu$-\hspace{0pt}null set depending on $\varphi$). We say that a weakly$^*\!$ scalarly measurable function $f:T\to L^\infty(\nu)$ is \textit{weakly$^*\!$ scalarly integrable} if the scalar function $\langle \varphi,f(\cdot) \rangle$ is integrable for every $\varphi\in L^1(\nu)$. A weakly$^*\!$ scalarly measurable function $f$ is \textit{Gelfand integrable} over $A\in \Sigma$ if there exists $\psi_A\in L^\infty(\nu)$ such that $\langle \varphi,\psi_A \rangle=\int_A\langle \varphi,f(t) \rangle d\mu$ for every $\varphi\in L^1(\nu)$. The element $\psi_A$ is called the \textit{Gelfand integral} (or the \textit{weak$^*\!$ integral}) of $f$ over $A$ and denoted by $\int_Afd\mu$. Every weakly$^*\!$ scalarly integrable function is weakly$^*\!$ integrable; see \citet[Theorem 11.52]{ab06}. Denote by $G^1(\mu,L^\infty(\nu))$ the equivalence classes of Gelfand integrable functions with respect to weakly$^*\!$ scalarly equivalence.  

Equipped with the notion of Gelfand integration, we turn to the development of a straightforward variant of Proposition \ref{PP1}. It is  a special case of \citet[Theorem 3.3]{sa16} to a setting where the integration of $L^\infty(\nu)$-valued functions with respect to the finite measure $\mu$ and probability measures in $\Pi(X)$ is always supposed to be in the sense of a Gelfand integral. 

\begin{prop}[purification principle in $L^\infty$]
\label{PP2}
Proposition \ref{PP1} is valid in the sense of Gelfand integrals when the separable Banach space E is replaced by $L^\infty(\nu)$ endowed with the weak$^*$-topology, where $(\Omega,\F,\nu)$ is a countably generated $\sigma$-finite measure space. 
\end{prop}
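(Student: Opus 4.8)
The plan is to reduce the claim to Proposition \ref{PP1} by exploiting the fact that, because $(\Omega,\F,\nu)$ is countably generated and $\sigma$-finite, the predual $L^1(\nu)$ is norm separable. A Gelfand integral identity in $L^\infty(\nu)$ is nothing but the assertion that the scalar pairings $\langle\varphi,\cdot\rangle$ agree for every $\varphi\in L^1(\nu)$, and by continuity of these pairings it suffices to test against a countable dense family. I would therefore fix a countable dense set $\{\varphi_n\}_{n\in\N}$ of $L^1(\nu)$ and encode all of these test functionals simultaneously into a single auxiliary map valued in the separable Banach space $c_0$, to which Proposition \ref{PP1} applies verbatim.

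Concretely, set $a_n=2^{-n}/(1+\|\varphi_n\|_1)>0$ and define $\Psi:T\times X\to c_0$ by $\Psi(t,x)=(a_n\langle\varphi_n,\Phi(t,x)\rangle)_{n\in\N}$. I would check the three hypotheses of Proposition \ref{PP1}. Integrable boundedness is immediate: if $\|\Phi(t,x)\|_{L^\infty(\nu)}\le\varphi_0(t)$ with $\varphi_0\in L^1(\mu)$, then $a_n|\langle\varphi_n,\Phi(t,x)\rangle|\le 2^{-n}\varphi_0(t)$, so $\|\Psi(t,x)\|_{c_0}\le\varphi_0(t)$ and in particular $\Psi(t,x)\in c_0$. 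Measurability follows coordinatewise: each $(t,x)\mapsto\langle\varphi_n,\Phi(t,x)\rangle$ is a Carath\'eodory function, hence jointly measurable, so $\Psi$ is Borel measurable into the separable space $c_0$. For weak continuity of $\Psi(t,\cdot)$, fix $t$ and let $x_k\to x$ in $X$; each coordinate converges because $\Phi(t,\cdot)$ is weak$^*$ continuous and $\varphi_n\in L^1(\nu)$, while the whole sequence stays norm-bounded by $\varphi_0(t)$. Coordinatewise convergence together with this uniform bound forces weak convergence in $c_0$: for any $g=(g_m)\in\ell^1=(c_0)^*$, dominated convergence (domination by $\varphi_0(t)|g_m|$) yields $\langle g,\Psi(t,x_k)\rangle\to\langle g,\Psi(t,x)\rangle$. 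Since $X$ is metrizable this gives continuity of $\Psi(t,\cdot)$ into $(c_0,\text{weak})$.

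Proposition \ref{PP1} then produces $f\in\M(T,X)$ with $f(t)\in U(t)$ a.e.\ and $\int_T\int_X\Psi(t,x)\lambda(t,dx)d\mu=\int_T\Psi(t,f(t))d\mu$ as a Bochner identity in $c_0$. Applying the $n$-th coordinate functional $e_n^*\in\ell^1$, which commutes with Bochner integration, and cancelling $a_n\ne0$, I obtain for every $n$ that $\int_T\int_X\langle\varphi_n,\Phi(t,x)\rangle\lambda(t,dx)d\mu=\int_T\langle\varphi_n,\Phi(t,f(t))\rangle d\mu$. Both $t\mapsto\int_X\Phi(t,x)\lambda(t,dx)$ and $t\mapsto\Phi(t,f(t))$ are weak$^*$ scalarly integrable (their pairings with any $\varphi$ are dominated by $\|\varphi\|_1\varphi_0$), hence Gelfand integrable; writing $\psi_\lambda$ and $\psi_f\in L^\infty(\nu)$ for the two Gelfand integrals, the displayed equalities read $\langle\varphi_n,\psi_\lambda-\psi_f\rangle=0$ for all $n$. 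Since $\psi_\lambda-\psi_f$ is a norm-continuous functional on $L^1(\nu)$ and $\{\varphi_n\}$ is dense, this extends to all $\varphi\in L^1(\nu)$, giving $\psi_\lambda=\psi_f$, which is precisely the asserted Gelfand identity.

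I expect the main obstacle to be the passage from the countable family back to all of $L^1(\nu)$, i.e.\ ensuring that a single purification handles the continuum of scalar constraints indexed by $\varphi\in L^1(\nu)$. This is exactly where the countable-generation (equivalently, separability of $L^1(\nu)$) hypothesis is indispensable; without it the reduction to a $c_0$-valued problem---and hence to Proposition \ref{PP1}---would fail, mirroring the role of separability of $E$ there. The only other point requiring care is the verification that coordinatewise weak$^*$ convergence plus the uniform integrable bound upgrades to genuine weak convergence in the auxiliary separable space, so that the weak-continuity hypothesis of Proposition \ref{PP1} is legitimately met.
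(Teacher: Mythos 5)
Your proof is correct, but it is not the paper's route: the paper offers no self-contained argument for Proposition \ref{PP2} at all, disposing of it as a special case of \citet[Theorem 3.3]{sa16}, whose proof works directly in the dual space via the Lyapunov convexity/purification machinery for Gelfand integrals under saturation. You instead reduce Proposition \ref{PP2} to Proposition \ref{PP1} itself: since $(\Omega,\F,\nu)$ is countably generated and $\sigma$-finite, $L^1(\nu)$ is norm separable, and you encode a countable dense family of test functionals $\{\varphi_n\}$ into a single auxiliary map $\Psi=(a_n\langle\varphi_n,\Phi\rangle)_n$ with values in the separable Banach space $c_0$; the weights make $\Psi$ integrably bounded, Pettis' theorem gives joint measurability from the coordinatewise Carath\'eodory property, and the standard fact that bounded coordinatewise convergence is weak convergence in $c_0$ delivers the weak continuity hypothesis. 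The Bochner identity in $c_0$ from Proposition \ref{PP1} then yields the countably many scalar identities, and norm density of $\{\varphi_n\}$ in $L^1(\nu)$ plus norm continuity of $\varphi\mapsto\langle\varphi,\psi_\lambda-\psi_f\rangle$ upgrades them to the full Gelfand identity; the only steps you leave implicit (existence of the iterated Gelfand integrals and measurability of $t\mapsto\int_X\langle\varphi,\Phi(t,x)\rangle\lambda(t,dx)$) are standard Young-measure facts already embedded in the paper's framework. What your approach buys is a genuinely self-contained deduction showing that the $L^\infty$ case requires no measure-theoretic input beyond separability of the predual, and it generalizes verbatim to the dual of any separable Banach space with its weak$^*$ topology; what the paper's citation buys is economy, since the dual-space purification theorem of \citet{sa16} is developed independently and its companion results (e.g.\ Lemma 3.1 there) are invoked elsewhere in the paper anyway, notably in the proof of Theorem \ref{WE3}.
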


\section{Relaxed Large Economies}
\subsection{Relaxation of Large Economies}
The set of agents is given by a complete finite measure space $(T,\Sigma,\mu)$. The commodity space is given by a separable Banach space $E$. The preference relation ${\succsim}(t)$ of each agent $t\in T$ is a complete, transitive binary relation on a common consumption set $X\subset E$, which induces the preference map $t\mapsto {\succsim}(t)\subset X\times X$. We denote by $x\,{\succsim}(t)\,y$ the relation $(x,y)\in {\succsim}(t)$. The indifference and strict relations are defined respectively by $x\,{\sim}(t)\,y$ $\Leftrightarrow$ $x\,{\succsim}(t)\,y$ and $y\,{\succsim}(t)\,x$, and by $x\,{\succ}(t)\,y$ $\Leftrightarrow$ $x\,{\succsim}(t)\,y$ and $x\,{\not\sim}(t)\,y$. Each agent possesses an initial endowment $\omega(t)\in X$, which is the value of a Bochner integrable function $\omega:T\to E$. The economy $\E$ consists of the primitives $\E=\{ (T,\Sigma,\mu),X,\succsim,\omega \}$. 

The standing assumption on $\E$ is described as follows. 

\begin{assmp}
\label{assmp1}
\begin{enumerate}[(i)]
\item $X$ is a weakly compact subset of $E$. 
\item ${\succsim}(t)$ is a weakly closed subset of $X\times X$ for every $t\in T$. 
\item For every $x,y\in X$ the set $\{ t\in T\mid x\,{\succsim}(t)\,y \}$ is in $\Sigma$.  
\end{enumerate}
\end{assmp}

The weak compactness assumption in condition (i) is made in \cite{ky91,le13,mdr03,no97,po97,ry91} for the uncommon consumption set of each agent. Since $E$ is separable, the weakly compact set $X\subset E$ is metrizable for the weak topology (see \citet[Theorem V.6.3]{ds58}), and hence, the common consumption set $X$ is a compact Polish space. The preference relation ${\succsim}(t)$ is said to be \textit{continuous} if it satisfies condition (ii). The measurability of the preference mapping in condition (iii) is introduced in \cite{au69}. 

It follows from \citet[Proposition 1]{au69} that there exists a Carath\'{e}odory function $u:T\times X\to \R$ such that\footnote{While \cite{au69} treated the case where $X$ is the nonnegative orthant of a finite-dimensional Euclidean space, the proof is obviously valid as it stands for the case where $X$ is a separable metric space.}  
\begin{equation}
\label{rp1}
\forall x,y\in X\ \forall t\in T: x\,{\succsim}(t)\,y \Longleftrightarrow u(t,x)\ge u(t,y). 
\end{equation} 
Moreover, this representation in terms of Carath\'eodory functions is unique up to strictly increasing, continuous transformations in the following sense: If $F:T\times \R\to \R$ is a function such that $t\mapsto F(t,r)$ is measurable and $r\mapsto F(t,r)$ is strictly increasing and continuous, then $x\,{\succsim}(t)\,y \Leftrightarrow F(t,u(t,x))\ge F(t,u(t,y))$, where $(t,x)\mapsto F(t,u(t,x))$ is a Carath\'eodory function. In the sequel, we may assume without loss of generality that the preference map $t\mapsto {\succsim}(t)$ is represented by a Carath\'eodory function $u$ that is unique up to strictly increasing, continuous transformations. 

Following \cite{sa16}, we introduce the notion of ``relaxation'' of preferences for large economies. Given a continuous preference ${\succsim}(t)$ on $X$, its continuous affine extension ${\succsim}_\mathcal{R}(t)$ to $\Pi(X)$ is obtained by convexifying (randomizing) the individual utility function $u(t,\cdot)$ in such a way 
\begin{equation}
\label{rp2}
\forall P,Q\in \Pi(X)\ \forall t\in T: P\,{\succsim}_\mathcal{R}(t)\,Q \stackrel{\text{def}}{\Longleftrightarrow}  \int_X u(t,x)dP\ge \int_X u(t,x)dQ.
\end{equation}
The continuous extension ${\succsim}_{\mathcal{R}}(t)$ of ${\succsim}(t)$ from $X$ to the \textit{relaxed consumption set} $\Pi(X)$ is called a \textit{relaxed preference relation} on $\Pi(X)$. Thus, the restriction of ${\succsim_\mathcal{R}}(t)$ to $\Delta(X)$ coincides with ${\succsim}(t)$ on $X$. Indifference relation ${\sim}_\mathcal{R}(t)$ and strict relation ${\succ}_\mathcal{R}(t)$ are defined in a way analogous to the above. The extension formula \eqref{rp2} conforms to the relaxation technique explored in \cite{mc67,wa72,yo69}. It is noteworthy that relaxed preferences also conform to the ``expected utility hypothesis'' and the continuous function $u(t,\cdot)$ corresponds to the ``von Neumann--Morgenstern utility function'' for ${\succsim}_\mathcal{R}(t)$. That is, ${\succsim}_\mathcal{R}(t)$ is a continuous preference relation on $\Pi(X)$ satisfying the ``independence axiom'' introduced in \cite{vm53}. 
\begin{description}
\item[(Independence)] For every $P,Q,R\in \Pi(X)$ and $\alpha\in [0,1]$: $P\,{\sim}_\mathcal{R}(t)\,Q$ implies $\alpha P+(1-\alpha)R\,{\sim}_\mathcal{R}(t)\,\alpha Q+(1-\alpha)R$. 
\end{description}
Conversely, for every $t\in T$ any continuous binary relation on $\Pi(X)$ satisfying the independence axiom is representable in terms of the continuous von Neumann--Morgenstern utility function $u(t,\cdot)$ for which \eqref{rp2} is satisfied; see \citet[Theorem 3]{gr72}. Furthermore, this representation is unique up to positive affine transformations. 

Denote by $\E_\mathcal{R}=\{ (T,\Sigma,\mu),\Pi(X),{\succsim}_\mathcal{R},\delta_{\omega(\cdot)} \}$ the  \textit{relaxed economy} induced by the original economy $\E=\{ (T,\Sigma,\mu),X,\succsim,\omega \}$, where the initial endowment $\omega(t)\in X$ of each agent is identified with a Dirac measure $\delta_{\omega(t)}\in \Delta(X)$, and hence, $\delta_{\omega(\cdot)}\in \mathcal{R}(T,X)$.

\subsection{Relaxed Demand Sets}
Given a price $p\in E^*\setminus \{ 0 \}$, for each agent $t\in T$, as usual we define the budget set by $B(t,p)=\{ x\in X\mid \langle p,x \rangle\le \langle p,\omega(t) \rangle \}$ and the demand set by $D(t,p)=\{ x\in X\mid x\,{\succsim}(t)\,y \ \forall y\in B(t,p) \}$. Let $\imath_X$ be the identity map on $X$. Similarly, the \textit{relaxed budget set} of each agent is defined by
$$
B_\mathcal{R}(t,p)=\left\{ P\in \Pi(X) \mid \int_X \langle p,\imath_X(x) \rangle dP\le \langle p,\omega(t) \rangle \right\}
$$ 
and the \textit{relaxed demand set} is given by
$$
D_\mathcal{R}(t,p)=\{ P\in B_\mathcal{R}(t,p) \mid P\,{\succsim}_\mathcal{R}(t)\,Q \ \forall Q\in B_\mathcal{R}(t,p) \}. 
$$
We denote by $\int\imath_XdP$ the Bochner integral of $\imath_X$ with respect to the probability measure $P\in \Pi(X)$. Since $\int\langle p,\imath_X(x) \rangle dP=\langle p,\int\imath_XdP \rangle$ in view of the Bochner integrability of $\imath_X$, the ``barycentric commodity'' $\int\imath_XdP$ of $P\in B_\mathcal{R}(t,p)$ is in $X$ whenever $X$ is convex (which we do not assume), and affordable under the relaxed budget constraint, and the relaxed commodity $P$ is evaluated in terms of the expected utility represented in \eqref{rp2}. 

A remarkable, but natural connection between the market behavior of each agent in the original economy and that in the relaxed economy is that the maximization of expected utility subject to the relaxed budget constraint is ``consistent'' with the deterministic utility maximization subject to the budget constraint. Specifically, we have the following characterization on the relaxed demand set.  

\begin{prop}
\label{thm1}
Let $(T,\Sigma,\mu)$ be a finite measure space and $E$ be a separable Banach space. Suppose that the economy $\E$ satisfies Assumption \ref{assmp1}. Then for every $p\in E^*\setminus \{ 0 \}$ and $t\in T$: $P\in D_\mathcal{R}(t,p)$ if and only if $P(D(t,p))=1$. 
\end{prop}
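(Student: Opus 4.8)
The plan is to reduce the statement to the expected-utility representation \eqref{rp2}. Since ${\succsim}_\mathcal{R}(t)$ is represented by the affine functional $P\mapsto\int_X u(t,x)\,dP$, membership $P\in D_\mathcal{R}(t,p)$ means exactly that $P$ maximizes $\int_X u(t,x)\,dP$ over $B_\mathcal{R}(t,p)$. Set $m_t:=\max_{y\in B(t,p)}u(t,y)$; the maximum is attained because $B(t,p)=X\cap\{x\mid\langle p,x\rangle\le\langle p,\omega(t)\rangle\}$ is weakly compact (a weakly closed subset of the weakly compact set $X$) and $u(t,\cdot)$ is continuous for the weak topology on $X$. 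By \eqref{rp1}, $D(t,p)$ is the set of budget-feasible maximizers, $D(t,p)=\{x\in B(t,p)\mid u(t,x)=m_t\}$; in particular $D(t,p)\subseteq B(t,p)$ and $u(t,\cdot)\equiv m_t$ on $D(t,p)$. The proposition thus becomes a statement about where the affine functional attains its maximum over $B_\mathcal{R}(t,p)$ and about the concentration of the maximizers on $\{x\mid u(t,x)=m_t\}$.

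The decisive ingredient, which I expect to be the main obstacle, is the bound
\[
\int_X u(t,x)\,dQ\le m_t\qquad\text{for every }Q\in B_\mathcal{R}(t,p),
\]
together with the description of its equality case. Its content is that randomizing over the relaxed budget set cannot raise expected utility above the deterministic budget-constrained optimum $m_t$. The natural route is to use that a budget-feasible lottery places all its mass on $B(t,p)$---the operative reading of the relaxed budget constraint---so that $\int_X u(t,x)\,dQ=\int_{B(t,p)}u(t,x)\,dQ\le m_t$, with equality, by continuity of $u(t,\cdot)$, exactly when $Q(\{x\mid u(t,x)=m_t\})=1$, i.e.\ $Q(D(t,p))=1$. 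I would isolate this as a lemma: it is the single place where the support of a feasible lottery and the budget constraint interact, and it is precisely this concentration conclusion that later renders the purification Proposition \ref{PP1} applicable with $U(t)=D(t,p)$.

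Granting the lemma, both implications are short. For ``if'', suppose $P(D(t,p))=1$; as $D(t,p)\subseteq B(t,p)$ the lottery $P$ lies in $B_\mathcal{R}(t,p)$, and $u(t,\cdot)\equiv m_t$ on $D(t,p)$ gives $\int_X u(t,x)\,dP=m_t$, so the bound shows $P$ to be a maximizer, $P\in D_\mathcal{R}(t,p)$. For ``only if'', suppose $P\in D_\mathcal{R}(t,p)$; testing optimality against $\delta_{x^\ast}$ for a maximizer $x^\ast\in D(t,p)$ (note $\delta_{x^\ast}\in B_\mathcal{R}(t,p)$) gives $\int_X u(t,x)\,dP\ge u(t,x^\ast)=m_t$, while the bound gives the reverse inequality; hence $\int_X u(t,x)\,dP=m_t$, and the equality case of the lemma yields $P(D(t,p))=1$. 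The background verifications---attainment of $m_t$, weak compactness of $B(t,p)$, and weak continuity of $u(t,\cdot)$ from Assumption \ref{assmp1} through the metrizability of the weak topology on $X$---are routine, and I would dispatch them first.
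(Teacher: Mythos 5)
Your overall skeleton (reduce to the representation \eqref{rp2}, test optimality against Dirac measures $\delta_y$ with $y\in B(t,p)$ to get $\int u(t,x)\,dP\ge m_t$, where $m_t=\max_{y\in B(t,p)}u(t,y)$) matches the paper's proof. But the step you yourself identify as decisive is false, and it cannot be repaired. It is not true that $Q\in B_\mathcal{R}(t,p)$ implies $Q(B(t,p))=1$: the relaxed budget set is defined by $\int_X\langle p,\imath_X(x)\rangle\,dQ\le\langle p,\omega(t)\rangle$, a constraint on \emph{expected} expenditure (equivalently, on the cost of the barycenter $\int\imath_X\,dQ$), not on the support of $Q$ --- indeed, allowing lotteries whose support leaves the deterministic budget set is the whole point of the relaxation. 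Concretely, take $E=\R$, $X=\{0,1,2\}$, $p=1$, $\omega(t)=1$, so $B(t,p)=\{0,1\}$: the lottery $Q=\tfrac12\delta_0+\tfrac12\delta_2$ has expected expenditure $1\le\langle p,\omega(t)\rangle$, hence $Q\in B_\mathcal{R}(t,p)$, while $Q(B(t,p))=\tfrac12$. Worse, the lemma you wanted this step to deliver is itself false under Assumption \ref{assmp1} alone: with $u(t,0)=u(t,1)=0$ and $u(t,2)=1$ (a continuous, closed, measurable preference on this weakly compact $X$), the same $Q$ gives $\int_X u(t,x)\,dQ=\tfrac12>0=m_t$. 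So the bound $\int_X u(t,x)\,dQ\le m_t$ over $B_\mathcal{R}(t,p)$ is not a consequence of the stated hypotheses, and since both of your implications invoke it, the argument collapses. (A secondary slip: even granting full mass on $B(t,p)$, the equality case yields $Q(\{x\in X\mid u(t,x)=m_t\})=1$, and this set need not coincide with $D(t,p)$, since $u(t,\cdot)$ may attain the value $m_t$ outside the budget set.)

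For comparison, the paper's proof never restricts the support of budget-feasible lotteries. It obtains the bound from a pointwise dichotomy: $u(t,\cdot)=m_t$ on $D(t,p)$ and $u(t,\cdot)<m_t$ on $X\setminus D(t,p)$, so that $\int_X u(t,x)\,dQ\le m_t$ for \emph{every} $Q\in\Pi(X)$, with equality exactly when $Q(D(t,p))=1$. In other words, the work your support claim was meant to do is done there by the assertion that no commodity in $X$ --- affordable or not --- yields utility above the budget-constrained optimum $m_t$. You should be aware that this is precisely where the proposition is delicate: in the example above that dichotomy also fails ($2$ is unaffordable yet $u(t,2)>m_t$), and there $D_\mathcal{R}(t,p)=\{\tfrac12\delta_0+\tfrac12\delta_2\}$, whose unique element puts mass $\tfrac12$ on the set of budget-feasible maximizers. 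So the substance of the statement lives exactly in the interaction you tried to dispose of by rereading the budget constraint as a support restriction; any correct argument has to confront lotteries that mix cheap, low-utility points with expensive, high-utility ones, rather than define them away.
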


\subsection{Relaxed Walrasian Equilibria}
To deal with the equilibrium concept with or without free disposal simultaneously, following \citet[Chapter 8]{mo06}, we introduce ``market constraints'' for the definition of (relaxed) allocations.  

\begin{dfn}
\label{Ball}
Let $C$ be a nonempty subset of $E$. 
\begin{enumerate}[(i)]
\item An element $f\in L^1(\mu,E)$ is an \textit{allocation} for $\E$ if it satisfies:
$$
\int_Tf(t)d\mu-\int_T\omega(t)d\mu\in C \quad\text{and $f(t)\in X$ a.e. $t\in T$}.
$$
\item An element $\lambda\in \mathcal{R}(T,X)$ is a \textit{relaxed allocation} for $\E_\mathcal{R}$ if it satisfies: 
$$
\int_T\int_X\imath_X(x)\lambda(t,dx)d\mu-\int_T\omega(t)d\mu\in C. 
$$   
\end{enumerate}
\end{dfn}

In particular, when $C=\{ 0 \}$, the definition reduces to the (relaxed) allocations ``without'' \textit{free disposal}; when $-C$ is a convex cone and $E$ is endowed with the cone order $\le$ defined by $x\le y \Leftrightarrow y-x\in -C$, the definition reduces to the (relaxed) allocations ``with'' free disposal. Denote by $\A(\E)$ the set of allocations for $\E$ and by $\A(\E_\mathcal{R})$ the set of relaxed allocations for $\E_\mathcal{R}$. If $\lambda$ is a relaxed allocation for $\E_\mathcal{R}$ such that $\lambda(t)=\delta_{f(t)}\in \Delta(X)$ for every $t\in T$ and $f\in L^1(\mu,E)$, then it reduces to the usual feasibility constraint $\int fd\mu-\int \omega d\mu\in C$ for $\E$. This means that $\A(\E)\subset \A(\E_\mathcal{R})$. 

\begin{dfn}
\begin{enumerate}[(i)]
\item A price-allocation pair $(p,f)\in (E^*\setminus \{ 0 \})\times \A(\E)$ is a \textit{Walrasian equilibrium} for $\E$ if a.e.\ $t\in T$: $f(t)\in B(t,p)$ and $f(t)\,{\succsim}(t)\,x$ for every $x\in B(t,p)$.
\item A price-relaxed allocation pair $(p,\lambda)\in (E^*\setminus \{ 0 \})\times \A(\E_\mathcal{R})$ is a \textit{relaxed Walrasian equilibrium} for $\E_\mathcal{R}$ if a.e.\ $t\in T$: $\lambda(t)\in B_\mathcal{R}(t,p)$ and $\lambda(t)\,{\succsim}_\mathcal{R}(t)\,P$ for every $P\in B_\mathcal{R}(t,p)$.
\end{enumerate}
\end{dfn}
\noindent
Denote by $\W(\E)$ the set of Walrasian allocations for $\E$ and by $\W(\E_\mathcal{R})$ the set of relaxed Walrasian allocations for $\E_\mathcal{R}$. 

Any Walrasian equilibrium for the original economy is regarded as a ``purified'' relaxed Walrasian equilibrium for the relaxed economy. Under the saturation hypothesis, the converse result holds as well. That is, any relaxed Walrasian equilibrium for the relaxed economy can be purified as a Walrasian equilibrium for the original economy.  

\begin{prop}
\label{eqv1}
Let $(T,\Sigma,\mu)$ be a finite measure space and $E$ be a separable Banach space. Suppose that the economy $\E$ satisfies Assumption \ref{assmp1}. If $(p,f)$ is a Walrasian equilibrium for $\E$, then $(p,\delta_{f(\cdot)})$ is a relaxed Walrasian equilibrium for $\E_\mathcal{R}$. Conversely, if $(p,\lambda)$ is a relaxed Walrasian equilibrium for $\E_\mathcal{R}$, then there exists a Walrasian equilibrium $(p,f)$ for $\E$ such that $\lambda(t)\,{\sim}_\mathcal{R}(t)\,\delta_{f(t)}$ a.e.\ $t\in T$ whenever $(T,\Sigma,\mu)$ is saturated. 
\end{prop}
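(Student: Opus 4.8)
The plan is to move between the deterministic demand set $D(t,p)$ and the relaxed demand set $D_\mathcal{R}(t,p)$ through Proposition \ref{thm1}, invoking the purification principle (Proposition \ref{PP1}) only for the converse. For the forward implication, given a Walrasian equilibrium $(p,f)$ for $\E$, I would first note that $\delta_{f(\cdot)}\in \A(\E_\mathcal{R})$, since the barycenter of $\delta_{f(t)}$ is $f(t)$ and $\A(\E)\subset \A(\E_\mathcal{R})$. For a.e.\ $t$ the equilibrium conditions read $f(t)\in B(t,p)$ and $f(t)\,{\succsim}(t)\,y$ for all $y\in B(t,p)$, i.e.\ $f(t)\in D(t,p)$, so $\delta_{f(t)}(D(t,p))=1$; by Proposition \ref{thm1} this gives $\delta_{f(t)}\in D_\mathcal{R}(t,p)$, which unwinds precisely to relaxed budget feasibility ($\delta_{f(t)}\in B_\mathcal{R}(t,p)$) together with relaxed optimality ($\delta_{f(t)}\,{\succsim}_\mathcal{R}(t)\,Q$ for all $Q\in B_\mathcal{R}(t,p)$). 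Hence $(p,\delta_{f(\cdot)})$ is a relaxed Walrasian equilibrium.

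For the converse, suppose $(p,\lambda)$ is a relaxed Walrasian equilibrium and $(T,\Sigma,\mu)$ is saturated. Relaxed optimality gives $\lambda(t)\in D_\mathcal{R}(t,p)$ a.e., so Proposition \ref{thm1} yields $\lambda(t)(D(t,p))=1$ a.e. I would then apply Proposition \ref{PP1} with the separable Banach space $E$, the compact Polish space $X$ (Assumption \ref{assmp1}(i) makes $X$ weakly compact and metrizable), the map $\Phi=\imath_X$ (integrably bounded since $X$ is norm bounded, and weakly continuous since it is the identity), and the multifunction $U(t)=D(t,p)$. To verify $\mathrm{gph}\,U\in \Sigma\otimes \mathrm{Borel}(X)$ I would use the Carath\'eodory representation $u$: the budget correspondence has measurable graph (as $x\mapsto \langle p,x\rangle$ is weakly continuous and $t\mapsto \langle p,\omega(t)\rangle$ is measurable), $m(t)=\max_{y\in B(t,p)}u(t,y)$ is measurable by the measurable maximum theorem \citep[Theorem 18.19]{ab06}, and $D(t,p)$ is the superlevel set $\{x:u(t,x)\ge m(t)\}$ of a jointly measurable function. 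Since $\lambda(t)(U(t))=1$ a.e., Proposition \ref{PP1} produces $f\in \M(T,X)$ with $f(t)\in D(t,p)$ a.e.\ and $\int_T\int_X\imath_X(x)\,\lambda(t,dx)\,d\mu=\int_T f(t)\,d\mu$.

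It then remains to check that $(p,f)$ is a Walrasian equilibrium with the asserted indifference. Boundedness and measurability give $f\in L^1(\mu,E)$, and the displayed barycenter identity combined with relaxed feasibility of $\lambda$ yields $\int_T f\,d\mu-\int_T\omega\,d\mu\in C$, so $f\in \A(\E)$. From $f(t)\in D(t,p)$ I obtain $f(t)\,{\succsim}(t)\,y$ for every $y\in B(t,p)$; for individual budget feasibility I would feed $f(t)\in D(t,p)$ back through Proposition \ref{thm1} to get $\delta_{f(t)}\in D_\mathcal{R}(t,p)\subset B_\mathcal{R}(t,p)$, whence $\langle p,f(t)\rangle\le \langle p,\omega(t)\rangle$, i.e.\ $f(t)\in B(t,p)$. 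Thus $(p,f)$ is a Walrasian equilibrium. Finally, $\lambda(t)$ and $\delta_{f(t)}$ both lie in $D_\mathcal{R}(t,p)$, the set of maximizers of $Q\mapsto \int_X u(t,x)\,dQ$ over $B_\mathcal{R}(t,p)$; all its elements attain the same maximal expected utility, so $\lambda(t)\,{\sim}_\mathcal{R}(t)\,\delta_{f(t)}$ a.e.

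I expect the main obstacle to be the converse, and within it the two genuinely non-formal points: establishing $\mathrm{gph}\,D(\cdot,p)\in \Sigma\otimes \mathrm{Borel}(X)$ so that Proposition \ref{PP1} is applicable, and arranging that the purified control be drawn from $D(t,p)$ so that the single Bochner-integral constraint of Proposition \ref{PP1} simultaneously secures market feasibility (the barycenter identity, which is essential in the no-free-disposal case $C=\{0\}$) while Proposition \ref{thm1} recovers individual budget feasibility from the selection alone. Saturation of $(T,\Sigma,\mu)$ enters exactly here, as it is precisely what licenses the purification step.
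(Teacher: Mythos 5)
Your proposal is correct and follows essentially the same route as the paper's proof: Proposition \ref{thm1} to pass between $D(t,p)$ and $D_\mathcal{R}(t,p)$, the measurable maximum theorem to get $\mathrm{gph}\,D(\cdot,p)\in \Sigma\otimes\mathrm{Borel}(X)$, and Proposition \ref{PP1} with $\Phi=\imath_X$ and $U(t)=D(t,p)$ for the purification step under saturation. The only differences are presentational: the paper argues the forward direction by contradiction rather than citing Proposition \ref{thm1} directly, and it leaves implicit the final verifications (budget feasibility, market feasibility, and the indifference $\lambda(t)\,{\sim}_\mathcal{R}(t)\,\delta_{f(t)}$) that you spell out.
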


Another significant aspect on saturation is the density property of allocations and Walrasian allocations.

\begin{prop}[density property]
\label{dens1}
Let $(T,\Sigma,\mu)$ be a saturated finite measure space and $E$ be a separable Banach space. Suppose that the economy $\E$ satisfies Assumption \ref{assmp1}. Then $\A(\E_\mathcal{R})=\overline{\A(\E)}^{\mathit{\,w}}$ and $\W(\E_\mathcal{R})=\overline{\W(\E)}^{\mathit{\,w}}$.
\end{prop}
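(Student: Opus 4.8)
The plan is to establish each equality by a pair of inclusions: for the ``dense'' direction I would use the purification principle (Proposition~\ref{PP1}) to match a relaxed object exactly, on finitely many test functions, by a Dirac-valued one, and for the ``closed'' direction I would verify weak closedness of the relaxed side. Throughout I identify $f\in\A(\E)$ (resp.\ $f\in\W(\E)$) with $\delta_{f(\cdot)}\in\mathcal{R}(T,X)$, so that both equalities are statements in the weak topology of $\mathcal{R}(T,X)$; recall $\A(\E)\subset\A(\E_\mathcal{R})$ by definition and $\W(\E)\subset\W(\E_\mathcal{R})$ by the first assertion of Proposition~\ref{eqv1}. For $\A(\E_\mathcal{R})=\overline{\A(\E)}^{\,\mathit{w}}$, I first note that the barycentre map $\beta\colon\lambda\mapsto\int_T\int_X\imath_X(x)\,\lambda(t,dx)\,d\mu$ is continuous from the weak topology of $\mathcal{R}(T,X)$ into the weak topology of $E$, since for each $\varphi\in E^*$ the function $u_\varphi(t,x)=\langle\varphi,x\rangle$ is weakly continuous in $x$ on the weakly compact $X$ and integrably bounded over the finite measure $\mu$, whence $u_\varphi\in\C^1(T\times X,\mu)$ and $\varphi\circ\beta=I_{u_\varphi}$. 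Taking $C$ weakly closed (as in the two relevant cases, $C=\{0\}$ and $-C$ a closed convex cone), $\A(\E_\mathcal{R})=\beta^{-1}(\int_T\omega\,d\mu+C)$ is weakly closed, giving $\overline{\A(\E)}^{\,\mathit{w}}\subset\A(\E_\mathcal{R})$. For the reverse inclusion I fix $\lambda\in\A(\E_\mathcal{R})$ and a basic weak neighborhood cut out by $u_1,\dots,u_k\in\C^1(T\times X,\mu)$, and apply Proposition~\ref{PP1} to $\Phi=(\imath_X,u_1,\dots,u_k)\colon T\times X\to E\times\R^k$ with $U\equiv X$; as $\Phi$ is integrably bounded, measurable and weakly continuous in $x$, saturation yields $f\in\M(T,X)$ with
\[
\int_Tf\,d\mu=\int_T\int_X\imath_X(x)\,\lambda(t,dx)\,d\mu,\qquad \int_Tu_i(t,f(t))\,d\mu=I_{u_i}(\lambda)\ \ (i=1,\dots,k).
\]
The first identity makes $f$ an allocation and the remaining ones place $\delta_{f(\cdot)}$ exactly on $\lambda$ for each $u_i$, so $\delta_{f(\cdot)}\in\A(\E)$ meets the neighborhood and $\lambda\in\overline{\A(\E)}^{\,\mathit{w}}$.

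The dense inclusion $\W(\E_\mathcal{R})\subset\overline{\W(\E)}^{\,\mathit{w}}$ proceeds identically, with the demand set now playing the role of the feasibility constraint in the purification. Given $\lambda\in\W(\E_\mathcal{R})$ with a supporting price $p$, relaxed optimality gives $\lambda(t)\in D_\mathcal{R}(t,p)$ a.e., and Proposition~\ref{thm1} turns this into $\lambda(t)(D(t,p))=1$ a.e. Because $t\mapsto B(t,p)$ is measurably parametrized through $\omega$ and $u$ is a Carath\'eodory function, a measurable maximum theorem yields $\mathrm{gph}\,D(\cdot,p)\in\Sigma\otimes\mathrm{Borel}(X)$, so Proposition~\ref{PP1} applies with $U(t)=D(t,p)$ and the same $\Phi$ to produce $f\in\M(T,X)$ with $f(t)\in D(t,p)$ a.e.\ matching $\lambda$ exactly on $u_1,\dots,u_k$. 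Individual optimality of $f$ at $p$ is then immediate and feasibility is inherited as above, so $(p,f)$ is a Walrasian equilibrium for $\E$ and $\delta_{f(\cdot)}\in\W(\E)$ lies in the chosen neighborhood; hence $\lambda\in\overline{\W(\E)}^{\,\mathit{w}}$.

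The main obstacle is the closed inclusion $\overline{\W(\E)}^{\,\mathit{w}}\subset\W(\E_\mathcal{R})$, i.e.\ the weak closedness of $\W(\E_\mathcal{R})$. Unlike $\A(\E_\mathcal{R})$, this set is not the preimage of a closed set under a single continuous map, because the supporting price is existentially quantified. My plan is to normalize prices (by scale invariance of equilibria) to a weak$^*$-compact set $P\subset E^*\setminus\{0\}$ and to write $\W(\E_\mathcal{R})$ as the image under the projection onto the $\lambda$-coordinate of the equilibrium graph $\Gamma=\{(p,\lambda)\in P\times\mathcal{R}(T,X)\mid (p,\lambda)\text{ is a relaxed Walrasian equilibrium}\}$; since $P$ is compact, weak closedness of $\W(\E_\mathcal{R})$ would follow once $\Gamma$ is shown to be closed. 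Checking that the budget and optimality conditions persist under a joint limit $(p_\alpha,\lambda_\alpha)\to(p,\lambda)$ is precisely where the failure of joint continuity of the valuation functional $(p,x)\mapsto\langle p,x\rangle$ in infinite dimensions intervenes---the third of the difficulties recalled in the introduction. I would resolve it by transplanting the device of \cite{le13,po97} to the relaxed demand correspondence $D_\mathcal{R}(\cdot,\cdot)$, where the affine (expected-utility) structure and the convexity of each $D_\mathcal{R}(t,p)$ make the required upper semicontinuity more tractable than for the original demand. Granting closedness of $\Gamma$, projection along the compact factor $P$ closes $\W(\E_\mathcal{R})$, and together with $\W(\E)\subset\W(\E_\mathcal{R})$ this yields the second equality.
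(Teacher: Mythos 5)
Your density halves are, step for step, the paper's own proof: fix $\lambda_0$ and a basic weak neighborhood determined by $u_1,\dots,u_k\in\C^1(T\times X,\mu)$, purify $\Phi=(\imath_X,u_1,\dots,u_k)$ via Proposition \ref{PP1} with $U\equiv X$ for allocations, and with $U(t)=D(t,p)$ for Walrasian allocations after converting relaxed optimality into $\lambda_0(t)(D(t,p))=1$ a.e.\ by Proposition \ref{thm1} (graph measurability of $D(\cdot,p)$ via the measurable maximum theorem, as in the proof of Proposition \ref{eqv1}). So on the inclusions $\A(\E_\mathcal{R})\subset\overline{\A(\E)}^{\,\mathit{w}}$ and $\W(\E_\mathcal{R})\subset\overline{\W(\E)}^{\,\mathit{w}}$ you and the paper coincide exactly.

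The divergence is in the reverse inclusions, and the comparison is instructive: the paper's proof consists solely of the two density arguments and never addresses the weak closedness of $\A(\E_\mathcal{R})$ or $\W(\E_\mathcal{R})$, even though (given density and $\A(\E)\subset\A(\E_\mathcal{R})$, $\W(\E)\subset\W(\E_\mathcal{R})$) the stated equalities are equivalent to precisely those closedness assertions. Your barycenter-continuity argument for $\A(\E_\mathcal{R})$ is correct and cleanly supplies a step the paper passes over in silence, for the relevant choices of $C$. For $\W(\E_\mathcal{R})$ you leave closedness as a plan (``granting closedness of $\Gamma$''), so your write-up is incomplete there---but it is incomplete at exactly the point where the paper offers nothing at all, so you have not fallen short of the paper's argument; you have exposed a step it treats as immediate. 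Be warned, however, that your sketched repair cannot run under the hypotheses of Proposition \ref{dens1}: only Assumption \ref{assmp1} is in force, so $E$ carries no order structure, there is no weak$^*$-compact normalized price set available (the simplex $S^*=\{p\in E_+^*\mid \langle p,v\rangle=1\}$ with $v$ interior to $E_+$ belongs to the setting of Theorem \ref{RWE1}), equilibrium prices need not be positive, and the upper semicontinuity of the enlarged correspondence $\Gamma$ in Lemma \ref{lem2} uses Assumption \ref{assmp2}(ii) (the cheaper-point condition) and the nonsatiation conditions, none of which are assumed here. So the closedness half of the second equality remains open in your proposal just as it is tacit in the paper; what your argument (and the paper's) actually delivers is the two density inclusions together with the weak closedness of $\A(\E_\mathcal{R})$.
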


\begin{rem}
\label{rem}
It is \citet[Theorem IV.2.6]{wa72} who established the density theorem $\mathcal{R}(T,X)=\overline{\M(T,X)}^{\,\mathit{w}}$ for compact polish spaces under the nonatomicity hypothesis. As noted in \citet[Remark 6.1]{ks14b}, Proposition \ref{dens1} holds under the nonatomicity hypothesis whenever $E=\R^n$, in which case the classical Lyapunov convexity theorem is sufficient for the density property. For another variant of the density property with the finite-dimensional setting, see, e.g., \citet[Corollary 3]{ba84}, \citet[Proposition II.7]{bl73}, and \citet[Theorem 7 and Corollary 4]{sb78}.  
\end{rem}

\subsection{Existence of Walrasian Equilibria with Free Disposal}
\label{subsec1}
For a substantive validation of the equivalence in Proposition \ref{eqv1}, it suffices to demonstrate the existence of relaxed Walrasian equilibria for the relaxed economy $\E_\mathcal{R}$ instead of Walrasian equilibria for the original economy $\E$. Following \cite{ky91,le13,mdr03,no97,po97,ry91}, we consider (relaxed) Walrasian equilibria with free disposal in which the commodity space $E$ is an ordered separable Banach space such that the norm interior of the positive cone $E_+$ is nonempty. Denote by $E_+^*$ be the set of elements $x^*\in E^*$ with $\langle x^*,x \rangle\ge 0$ for every $x\in E_+$. An  element in $E_+^*\setminus \{ 0 \}$ is said to be \textit{positive}. A maximal element in $X$ for ${\succsim}(t)$ is called a \textit{satiation point} for ${\succsim}(t)$. Under Assumption \ref{assmp1}, satiation points for ${\succsim}(t)$ exist for every $t\in T$. 

\begin{assmp}
\label{assmp2}
\begin{enumerate}[(i)]
\item $X$ is a weakly compact subset of $E_+$. 
\item For every $t\in T$ there exists $z(t)\in X$ such that $\omega(t)-z(t)$ belongs to the norm interior of $E_+$. 
\item If $x\in X$ is a satiation point for ${\succsim}(t)$, then $x\ge \omega(t)$.  
\item If $x\in X$ is not a satiation point for ${\succsim}(t)$, then $x$ belongs to the weak closure of the upper contour set $\{ y\in X\mid y\,{\succ}(t)\,x \}$. 
\end{enumerate}
\end{assmp}
\noindent
Condition (ii) is due to \cite{ky91}, which guarantees that for every positive price the value of the initial endowment of each agent is strictly positive. Condition (iii) is introduced in \cite{po97} and imposed also in \cite{le13}. Condition (iv) is a variant of ``local nonsatiation'' originated in \cite{hi68} and is imposed also in \cite{le13,po97}.

We now present the first substantive result of this paper. 

\begin{thm}
\label{RWE1}
Let $(T,\Sigma,\mu)$ be a finite measure space and $E$ be an ordered separable Banach space such that the norm interior of $E_+$ is nonempty. Then for every economy $\E$ satisfying Assumptions \ref{assmp1} and \ref{assmp2}:
\begin{enumerate}[\rm (i)]
\item There exists a relaxed Walrasian equilibrium with free disposal for $\E_\mathcal{R}$ with a positive price. 
\item There exists a Walrasian equilibrium with free disposal for $\E$ with a positive price whenever $(T,\Sigma,\mu)$ is saturated. 
\end{enumerate}
\end{thm}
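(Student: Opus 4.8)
The plan is to prove (i) directly and then obtain (ii) as an immediate purification. Granting (i), let $(p,\lambda)$ be a relaxed Walrasian equilibrium with free disposal for $\E_\mathcal{R}$ with positive price $p$. Since we are in the free-disposal instance $C=-E_+$, Proposition \ref{eqv1} furnishes, under saturation, a Walrasian equilibrium $(p,f)$ for $\E$ with the \emph{same} price and $\lambda(t)\,{\sim}_\mathcal{R}(t)\,\delta_{f(t)}$ a.e. Because the purification underlying Proposition \ref{eqv1} preserves the aggregate barycenter, $\int_T f\,d\mu=\int_T\int_X\imath_X\,d\lambda(t)\,d\mu$, the membership $f\in\A(\E)$ (that is, $\int_T f\,d\mu-\int_T\omega\,d\mu\in -E_+$) is inherited, so $(p,f)$ is a free-disposal Walrasian equilibrium for $\E$ with positive price. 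Thus the entire burden falls on (i), and — this is the conceptual payoff — no saturation or even nonatomicity is needed there.

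For (i), the point is that relaxation has already done the work that a convexity hypothesis on preferences does in \cite{ky91,le13,po97}: by Proposition \ref{thm1} and the affinity of \eqref{rp2}, each relaxed demand set $D_\mathcal{R}(t,p)$ is a nonempty, convex, weakly closed — hence compact — subset of the compact metric space $\Pi(X)$, nonemptiness coming from Weierstrass applied to the continuous affine functional $P\mapsto\int_X u(t,x)\,dP$ on the compact convex set $B_\mathcal{R}(t,p)\ni\delta_{\omega(t)}$. Fixing $e$ in the norm interior of $E_+$, I would restrict prices to the convex price set $\mathcal{S}=\{p\in E_+^*\mid\langle p,e\rangle=1\}$, whose weak$^*$ compactness follows since the interior point bounds the norm of every $p\in\mathcal{S}$, whence Banach--Alaoglu applies. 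The existence problem then reduces to a Gale--Nikaido--Debreu fixed-point argument in which the indispensable convexity is supplied by relaxation rather than by aggregation. Explicitly, form the aggregate barycentric excess demand $\zeta(p)=\int_T d(t,p)\,d\mu-\int_T\omega\,d\mu$, where $d(t,p)=\{\int_X\imath_X\,dP\mid P\in D_\mathcal{R}(t,p)\}=\overline{\mathrm{co}}\,D(t,p)\subset E$, and verify that $\zeta(p)$ is nonempty (measurable maximum theorem), convex (the Aumann integral of a convex-valued correspondence is convex, with no appeal to nonatomicity), and weakly compact (its values lie in the weakly compact set $\overline{\mathrm{co}}\,X$), while Walras' law $\langle p,z\rangle\le 0$ for $z\in\zeta(p)$ follows by integrating the individual budget inequalities.

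The hard part is the upper hemicontinuity of $p\mapsto\zeta(p)$ from the weak$^*$ topology on $\mathcal{S}$ to the weak topology on $E$, as required by the fixed-point lemma. The obstruction is exactly the failure — emphasized in the Introduction — of joint continuity of the valuation $(p,P)\mapsto\langle p,\int_X\imath_X\,dP\rangle$: along a weak$^*$-convergent net of prices and a weakly convergent net of barycenters the pairing converges only separately, so the graph of $p\mapsto B_\mathcal{R}(t,p)$ need not be weak$^*$--weak closed and the associated demand correspondence need not be upper hemicontinuous. Lower hemicontinuity, by contrast, is secured by Assumption \ref{assmp2}(ii), which furnishes for each positive price a uniformly strictly cheaper point $z(t)$ with $\omega(t)-z(t)$ interior, yielding both the minimum-wealth condition and the strict positivity $\langle p,\omega(t)\rangle>0$ that makes the equilibrium price genuinely positive. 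To close the upper-hemicontinuity gap I would transcribe the argument of \cite{le13,po97} to the relaxed demand sets, invoking Assumption \ref{assmp2}(iv) to force non-satiated agents to exhaust their budgets and Assumption \ref{assmp2}(iii) to control satiated agents; these are precisely the structural ingredients on which the Lee--Podczeck treatment of the valuation discontinuity rests.

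With these properties in hand, an infinite-dimensional Gale--Nikaido--Debreu lemma (in the spirit of \citet{ya85}, via a Kakutani--Fan--Glicksberg fixed point on the weak$^*$ compact convex set $\mathcal{S}$) produces a positive price $p^*\in\mathcal{S}$ and a point $z^*\in\zeta(p^*)\cap(-E_+)$. Unwinding $z^*=\int_T\int_X\imath_X\,d\lambda^*(t)\,d\mu-\int_T\omega\,d\mu$ for a measurable selection $\lambda^*(t)\in D_\mathcal{R}(t,p^*)$ exhibits $(p^*,\lambda^*)$ as a relaxed Walrasian equilibrium with free disposal and positive price, giving (i). I expect the joint-continuity step to be the only genuine difficulty; everything else is the now-standard assembly of convex, compact, upper hemicontinuous data for a fixed-point theorem, with the crucial convexity handed to us for free by the passage to $\Pi(X)$.
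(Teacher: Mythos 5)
Your overall architecture --- normalized weak$^*$ compact price set, barycentric aggregate excess demand, the Gale--Nikaido lemma of \citet{ya85}, and purification via Proposition \ref{eqv1} for part (ii) --- is exactly that of the paper, and your treatment of (ii), including the observation that the purification preserves the aggregate barycenter so that free-disposal feasibility is inherited, is correct. The gap is at the step you yourself flag as the hard one, and it cannot be closed in the form you state it. You build $\zeta(p)$ from the genuine relaxed demand $D_\mathcal{R}(t,p)$ and propose to \emph{verify} its upper hemicontinuity by transcribing \cite{le13,po97}; but, as you concede two sentences earlier, $p\mapsto D_\mathcal{R}(t,p)$ need not be upper hemicontinuous at all, precisely because the budget constraint $\int\langle p_\alpha,\imath_X(x)\rangle dP_\alpha\le\langle p_\alpha,\omega(t)\rangle$ does not pass to the limit along a weak$^*$-convergent net of prices paired with a weakly convergent net of barycenters. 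The Lee--Podczeck device is not a proof of upper hemicontinuity of demand; it is a \emph{replacement} of the correspondence fed to the fixed-point theorem. The paper introduces the enlargement $\Gamma(t,p)=\{P\in\Pi(X)\mid P\,{\succsim}_\mathcal{R}(t)\,Q\ \ \forall Q\in B_\mathcal{R}(t,p)\}$, in which the candidate $P$ is \emph{not} required to be affordable, and proves (Lemma \ref{lem2}) that $p\mapsto\Gamma(t,p)$ is upper hemicontinuous: if the limit $P$ were outside $\Gamma(t,p)$, some affordable $Q$ would beat it, Assumption \ref{assmp2}(ii) makes that $Q$ strictly affordable, and then $\langle p_\alpha,\int\imath_X dQ\rangle<\langle p,\omega(t)\rangle$ for large $\alpha$ contradicts $P_\alpha\in\Gamma(t,p_\alpha)$. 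This argument evaluates the moving prices only against the \emph{fixed} measure $Q$, which is how it sidesteps the joint-continuity failure; it cannot be run for $D_\mathcal{R}$ itself.

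Consequently, Assumptions \ref{assmp2}(iii)--(iv) enter not where you place them (inside a continuity proof) but \emph{after} the fixed point. The Gale--Nikaido lemma is applied to $\xi(p)=\int I_\Gamma(t,p)d\mu-\int\omega\,d\mu$; Walras' law still holds for $\xi$ because $D_\mathcal{R}(t,p)\subset\Gamma(t,p)$ supplies at least one selection satisfying the budget inequality. This yields $p$ and $\lambda$ with $\lambda(t)\in\Gamma(t,p)$ a.e.\ and aggregate excess supply in $-E_+$. Then Lemma \ref{lem6} --- which is where (iii) and (iv) act, via the characterization of relaxed satiation points in Lemma \ref{lem5} --- shows that \emph{every} $P\in\Gamma(t,p)$ costs at least $\langle p,\omega(t)\rangle$; integrating and comparing with the aggregate inequality forces budget equality a.e., and since $D_\mathcal{R}(t,p)=\Gamma(t,p)\cap\{P\in\Pi(X)\mid\int\langle p,\imath_X(x)\rangle dP=\langle p,\omega(t)\rangle\}$, the selection $\lambda(t)$ lands in $D_\mathcal{R}(t,p)$ a.e. So your proposal names the right ingredients but assigns them the wrong roles: the argument must enlarge the demand correspondence first and recover budget exhaustion ex post, rather than prove a continuity property that the true demand correspondence does not possess. (A secondary point you elide: upper hemicontinuity of the \emph{integrated} correspondence also needs an argument --- the paper's Lemma \ref{lem4} does this via support functionals and Fatou's lemma --- though this part is routine once the integrand is upper hemicontinuous with weakly compact convex values.)
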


A sharp contrast to the literature on large economies, such as \cite{au66,hi74,ky91,le13,mdr03,no97,po97,ry91}, is that the saturation (or even the nonatomicity) hypothesis is unnecessary to guarantee the existence of relaxed Walrasian equilibria for the relaxed economies as well as the convexity hypothesis. Thus, whenever the set $T$ of agents is finite and $\mu$ is a counting measure, the first assertion of Theorem \ref{RWE1} reduces to the existence of relaxed Walrasian equilibria for a relaxed finite economies without convexity assumptions. This means that relaxed Walrasian equilibria always exist even though the original economy fails to possess Walrasian equilibria. Since the vernacular of \lq\lq relaxed" economies and \lq\lq relaxed" Walrasian equilibria is also used in \cite{ba08}, we invite the reader to compare Theorem \ref{RWE1} with the relevant result in \cite{ba08}.\footnote{\label{fn1}Indeed,  \citet{ba08} has the priority for the usage of this terminology in mathematical economics, but as mentioned in Footnote \ref{fn0}, rather than classical Walrasian general equilibrium theory, his concern is with a synthetic treatment that allows externalities and price-dependent preferences with a finite-dimensional commodity space.}

Given Proposition \ref{eqv1}, the second assertion of Theorem \ref{RWE1} simply drops the convexity hypothesis from \cite{ky91} under the saturation hypothesis and recovers the existence result of \cite{le13,po97,ry91} under the framework of economies with a common consumption set. Indeed, when $E$ is a finite-dimensional Euclidean space, the validity of the second assertion of Theorem \ref{RWE1} for nonatomic finite measure space of agents follows from Proposition \ref{PP1}; see Remark \ref{rem1}. Therefore, to repeat, if $(T,\Sigma,\mu)$ is a nonatomic finite measure space and $E=\R^n$, the existence of Walrasian equilibria with free disposal for $\E$ with a positive price is guaranteed under Assumptions \ref{assmp1} and \ref{assmp2}. On the other hand, when $X$ is a finite subset of $E_+$, the conditions on   (non)satiation points for ${\succsim}(t)$ are unnecessary for the existence result and Assumption \ref{assmp2} can be replaced by the following.  

\begin{assmp}
\label{assmp3}
\begin{enumerate}[\rm (i)]
\item $X$ is a finite subset of $E_+$.
\item For every $t\in T$ there exists $z(t)\in X$ such that $\omega(t)-z(t)$ belongs to the norm interior of $E_+$. 
\end{enumerate} 
\end{assmp}
\noindent 
This allows us to present the second substantive result of the paper.  

\begin{thm}
\label{RWE2}
Let $(T,\Sigma,\mu)$ be a finite measure space and $E$ be an ordered separable Banach space such that the norm interior of $E_+$ is nonempty. Then for every economy $\E$ satisfying Assumptions \ref{assmp1} and \ref{assmp3}: 
\begin{enumerate}[\rm (i)]
\item There exists a relaxed Walrasian equilibrium with free disposal for $\E_\mathcal{R}$ with a positive price.
\item There exists a Walrasian equilibrium with free disposal for $\E$ with a positive price whenever $(T,\Sigma,\mu)$ is saturated.
\end{enumerate}
\end{thm}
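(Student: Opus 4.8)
The plan is to obtain part (ii) as an immediate consequence of part (i) and to concentrate the real work on part (i). Indeed, if $(p,\lambda)$ is a relaxed Walrasian equilibrium with free disposal and positive price for $\E_\mathcal{R}$ and $(T,\Sigma,\mu)$ is saturated, then Proposition \ref{eqv1} furnishes a Walrasian equilibrium $(p,f)$ for $\E$ with $\lambda(t)\,{\sim}_\mathcal{R}(t)\,\delta_{f(t)}$ a.e.; the price $p$ is unchanged, hence still positive, and $(p,f)$ is by definition an equilibrium with free disposal because the purification underlying Proposition \ref{eqv1} preserves the aggregate barycentric commodity. Thus (ii) requires nothing beyond (i). I also record at the outset that neither saturation nor nonatomicity nor any convexity of the primitive preferences is needed for (i): relaxation already performs the convexification, since each $D_\mathcal{R}(t,p)$ is convex, the set of relaxed allocations concentrated on the relaxed demands is a convex subset of $\mathcal{R}(T,X)$, and its barycentric image is a linear image of a convex set---so the aggregate relaxed excess demand is convex-valued with no recourse to any Lyapunov-type argument.

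For part (i) I would first normalize prices. Fixing an interior point $e$ of $E_+$, set $\Delta=\{p\in E^*_+\mid \langle p,e\rangle=1\}$; the interiority of $e$ forces $\Delta$ to be norm-bounded, hence weak$^*$ compact by Alaoglu, and convex. Next I exploit the finiteness of $X=\{x_1,\dots,x_n\}$ together with $\omega(t)\in X$. Then $\Pi(X)$ is the standard simplex, the barycenter $\int\imath_XdP=\sum_i P(\{x_i\})x_i$ is a continuous affine map into the finite-dimensional set $\mathrm{conv}(X)\subset\mathrm{span}(X)$, and $D_\mathcal{R}(t,p)$ is precisely the nonempty, convex, compact solution set of the linear program $\max_{P}\sum_i P(\{x_i\})u(t,x_i)$ subject to $\sum_i P(\{x_i\})\langle p,x_i\rangle\le\langle p,\omega(t)\rangle$ over the simplex. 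Crucially, since $\omega(t)\in X$, this program depends on $p$ only through the finite vector $\phi(p)=(\langle p,x_1\rangle,\dots,\langle p,x_n\rangle)$, which is weak$^*$ continuous on $\Delta$; Assumption \ref{assmp3}(ii) gives $\langle p,\omega(t)\rangle>\langle p,z(t)\rangle\ge 0$ for every $p\in\Delta$, a Slater point yielding continuity and nonemptiness of the budget and demand correspondences. Standard measurable-selection arguments, using the Carath\'eodory structure of $u$ and the measurability of $\omega$, make $t\mapsto D_\mathcal{R}(t,p)$ measurable, so the aggregate barycentric excess demand $\zeta(p)=\{\int_T\int_X\imath_Xd\lambda(t)\,d\mu-\int_T\omega\,d\mu\mid \lambda(t)\in D_\mathcal{R}(t,p)\ \text{a.e.}\}\subset\mathrm{span}(X)$ is well defined, convex, compact, bounded, and satisfies the weak Walras law $\langle p,\xi\rangle\le 0$ for $\xi\in\zeta(p)$.

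The equilibrium is then produced by a fixed-point argument on the weak$^*$ compact convex set $\Delta$. Since $\zeta$ is convex- and compact-valued with values in the finite-dimensional $\mathrm{span}(X)$, I would verify that it is weak$^*$-to-norm upper hemicontinuous and apply an infinite-dimensional Gale--Nikaido--Debreu lemma in the spirit of \cite{le13,po97,ya85} to obtain $p^*\in\Delta$ with $\zeta(p^*)\cap(-C)\neq\emptyset$; the witnessing relaxed allocation $\lambda^*$, paired with $p^*$, is the desired relaxed Walrasian equilibrium with free disposal, and $p^*$ is positive because $\langle p^*,e\rangle=1>0$ and $p^*\in E^*_+$. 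The reason the satiation conditions of Assumption \ref{assmp2}(iii)--(iv) may be dropped here is that, with $X$ finite, the demand correspondence is automatically nonempty-, convex-valued and upper hemicontinuous without any local-nonsatiation device, so that free-disposal clearing together with the price normalization already delivers a positive equilibrium price.

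The main obstacle is exactly the one flagged in the introduction: the valuation $(p,P)\mapsto\int\langle p,\imath_X\rangle\,dP$ is only separately, not jointly, continuous---weak$^*$ in $p$ and weak in the barycenter---which is what ordinarily breaks the upper hemicontinuity of excess demand in infinite dimensions. Here the finiteness of $X$ is the lever that resolves it: the relevant valuations collapse to the finitely many weak$^*$ continuous functionals $p\mapsto\langle p,x_i\rangle$ and the barycenter is continuous on the finite simplex, so $p\mapsto D_\mathcal{R}(t,p)$ is weak$^*$ upper hemicontinuous for each fixed $t$. The delicate remaining point is to propagate this to the aggregate, i.e.\ to show that weak$^*$ convergent $p_k\to p^*$ forces upper hemicontinuity of $\zeta$. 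I would settle this by adapting the device of \cite{le13,po97}: dominated convergence applied to the wealth functions $t\mapsto\langle p_k,\omega(t)\rangle$---legitimate because $\omega\in L^1(\mu,E)$ and $\Delta$ is norm-bounded---combined with the fibrewise upper hemicontinuity of the linear programs and the weak compactness of $\mathcal{R}(T,X)$ yields the required closedness of $\zeta$ and closes the fixed-point step.
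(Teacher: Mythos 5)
Your proposal is correct and follows essentially the same route as the paper: part (ii) is deduced from part (i) via the purification in Proposition \ref{eqv1}, and part (i) exploits the finiteness of $X$ to restore joint continuity of the valuation functional (hence weak$^*$ upper semicontinuity of $D_\mathcal{R}(t,\cdot)$, with no need for the enlarged correspondence $\Gamma$ or any satiation conditions), then aggregates the barycentric demands and invokes the infinite-dimensional Gale--Nikaido lemma of \citet[Theorem 3.1]{ya85} on the weak$^*$ compact normalized price set. The only cosmetic difference is that you phrase the finiteness argument through the finitely many weak$^*$ continuous functionals $p\mapsto\langle p,x_i\rangle$ and a linear program over the simplex, whereas the paper observes that the relative weak topology on a finite $X$ is discrete---the same observation in different clothing.
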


We now conclude this subsection with two applications.

\begin{exmp}[envy-freeness/incentive compatibility]
An allocation $f\in \A(\E)$ is said to be \textit{envy-free} if $f(t)\,{\succsim}(t)\,f(t')$ for a.e.\ $t,t'\in T$. Let $\bar{\omega}(t)=\int\omega d\mu/\mu(T)$ and assume that $\bar{\omega}(t)\in X$ for every $t\in T$. If Assumptions \ref{assmp1} and \ref{assmp2} are satisfied for the economy $\overline{\E}=\{ (T,\Sigma,\mu),{\succsim},X,\bar{\omega} \}$ with the same initial endowment among agents, then Theorem \ref{RWE1} guarantees that $\overline{\E}$ possesses a Walrasian equilibrium that is also Pareto optimal and envy-free; see \cite{va74}. When $T$ is regarded as the set of random shocks drawn from the probability measure $\mu$, where each element $t\in T$ is an idiosyncratic shock that characterizes the type of agent, the envy-free condition is reduced to the ``truth revelation principle'', i.e., the ``incentive compatibility'' condition studied in \cite{pt84}. This reduces to, and implies,  the existence of Walrasian equilibria with incentive compatibility for economies with private information under the saturation hypothesis.  
\end{exmp}

\begin{exmp}[indivisible commodities]
Suppose that there are $n$ indivisible commodities each of which can be consumed in integer units and that the common consumption set of such commodities is finite for all agents. The resulting economy $\E$ with indivisible commodities is described in our framework as follows. Let $\Z_+$ be the set of nonnegative integers, $E$ the Euclidean space $\R^n$ with the Euclid norm, and  $X$ a finite subset of $\Z_+^n$. Let ${\succsim}(t)$ be a preference on $X$ represented by a Carath\'eodory function $u:T\times X\to \R$. Assume further that the endowment function $\omega:T\to \R^n$ is integrable such that $\omega(t)$ belongs to $X$ and each of its coordinates is a positive integer for every $t\in T$. Then Assumptions \ref{assmp1} and \ref{assmp3} are automatically satisfied, and Theorem \ref{RWE2} then guarantees that there exists a relaxed Walrasian equilibrium with free disposal for the relaxed economy $\E_\mathcal{R}$ with a positive price. In particular, if $(T,\Sigma,\mu)$ is nonatomic, then there exists a Walrasian equilibrium with free disposal for $\E$ with a positive price. The crucial difference of this consequence from the existence result in  \cite{ky81} is that it dispenses with the introduction of divisible commodities and  with the local nonsatiation of preferences, though at the cost of the finiteness of the consumption set. 
\end{exmp}

\subsection{Existence of Walrasian Equilibria on $L^\infty$}
\label{subsec4}
In this subsection, we turn to economies that are modeled with $L^\infty$ as a commodity. This extension is important for both substantive and technical reasons: substantively because it, and its dual,  was identified by \citet{be72} as the canonical space for Walrasian general equilibrium theory; technically because it leads to to shift the emphasis of analysis to the predual $L^1$ rather than the dual, and thereby from Bochner integration to Gelfand integration.  

Towards this end, let $(\Omega,\F,\nu)$ be a countably generated, $\sigma$-finite measure space. The norm dual of $L^\infty(\nu)$ is $\mathit{ba}(\nu)$, the space of finitely additive signed measures on $\F$ of bounded variation that vanishes on $\nu$-null sets with the duality given by $\langle \pi,\psi \rangle=\int\psi d\pi$ for $\pi\in \mathit{ba}(\nu)$ and $\psi\in L^\infty(\nu)$; see \citet[Theorem IV.8.14]{ds58}. We consider a (Gelfand) economy $\E^G=\{ (T,\Sigma,\mu),X,{\succsim},\omega \}$ for which the commodity space is $L^\infty(\nu)$ and the price space is $\mathit{ba}(\nu)$ with $\omega\in G^1(\mu,L^\infty(\nu))$ and $\omega(t)\in X$ for every $t\in T$ satisfying the following conditions. 

\begin{assmp}
\label{assmp4}
\begin{enumerate}[(i)]
\item $X$ is a weakly$^*\!$ compact subset of $L^\infty(\nu)$. 
\item ${\succsim}(t)$ is a weakly$^*$ closed subset of $X\times X$ for every $t\in T$. 
\item For every $x,y\in X$ the set $\{ t\in T\mid x\,{\succsim}(t)\,y \}$ is in $\Sigma$. 
\end{enumerate}
\end{assmp}

\noindent Since $L^1(\nu)$ is separable, the weak$^*$ compact set $X\subset L^\infty_+(\nu)$ is metrizable for the weak$^*$ topology of $L^\infty(\nu)$ (see \citet[Theorem V.5.1]{ds58}), and hence, the common consumption set $X$ is a compact Polish space. Therefore, the preference representation in \eqref{rp1} is valid for $\E^G$. Consequently, the preference representation \eqref{rp2} is also valid for its relaxed economy $\E^G_\mathcal{R}=\{ (T,\Sigma,\mu),\Pi(X),{\succsim}_\mathcal{R},\omega \}$. 

Next, we develop the analogue for Definition \ref{Ball}. 

\begin{dfn} 
Let $C$ be a nonempty subset of $L^\infty(\nu)$. 
\begin{enumerate}[(i)]
\item An element $f\in G^1(\mu,L^\infty(\nu))$ is an \textit{allocation} for $\E^G$ if it satisfies:
$$
\int_Tf(t)d\mu-\int_T\omega(t)d\mu\in C \quad\text{and $f(t)\in X$ a.e. $t\in T$}.
$$
\item An element $\lambda\in \mathcal{R}(T,X)$ is a \textit{relaxed allocation} for $\E^G_\mathcal{R}$ if it satisfies: 
$$
\int_T\int_X\imath_X(x)\lambda(t,dx)d\mu-\int_T\omega(t)d\mu\in C. 
$$   
\end{enumerate}
\end{dfn}
\noindent
Denote by $\A(\E^G)$ the set of Gelfand integrable allocations for $\E^G$ and by $\A(\E^G_\mathcal{R})$ the set of relaxed allocations for $\E^G_\mathcal{R}$. 

Given a price $\pi\in \mathit{ba}(\nu)\setminus \{ 0 \}$, we can define (relaxed) budget set and (relaxed) demand set for each agent as in the previous section. Thus, (relaxed) Walrasian equilibria with free disposal for $\E^G$ (resp.\ $\E^G_\mathcal{R}$) are introduced in an obvious way. 

\begin{dfn}
\begin{enumerate}[(i)]
\item A price-allocation pair $(\pi,f)\in (\mathit{ba}(\nu)\setminus \{ 0 \})\times \A(\E^G)$ is a \textit{Walrasian equilibrium} for $\E^G$ if a.e.\ $t\in T$: $f(t)\in B(t,\pi)$ and $f(t)\,{\succsim}(t)\,x$ for every $x\in B(t,\pi)$.
\item A price-relaxed allocation pair $(\pi,\lambda)\in (\mathit{ba}(\nu)\setminus \{ 0 \})\times \A(\E^G_\mathcal{R})$ is a \textit{relaxed Walrasian equilibrium} for $\E^G_\mathcal{R}$ if a.e.\ $t\in T$: $\lambda(t)\in B_\mathcal{R}(t,\pi)$ and $\lambda(t)\,{\succsim}_\mathcal{R}(t)\,P$ for every $P\in B_\mathcal{R}(t,\pi)$.
\end{enumerate}
\end{dfn}
\noindent
Denote by $\W(\E^G)$ the set of Walrasian allocations for $\E^G$ and by $\W(\E^G_\mathcal{R})$ the set of relaxed Walrasian allocations for $\E^G_\mathcal{R}$. 

It is clear now that Proposition \ref{thm1} is valid for $E=L^\infty(\nu)$ and $X\subset L^\infty(\nu)$ under Assumption \ref{assmp4}. Corresponding to Proposition \ref{eqv1}, we obtain the following characterization under the saturation hypothesis, whose proof is same with that of Proposition \ref{eqv1} if one simply replaces the Bochner integrals by Gelfand ones.

\begin{prop}
\label{eqv2}
Let $(T,\Sigma,\mu)$ be a saturated finite measure space and $(\Omega,\F,\nu)$ be a countably generated $\sigma$-finite measure space. Suppose that the economy $\E^G$ satisfies Assumption \ref{assmp4}. If $(\pi,f)$ is a Walrasian equilibrium for $\E^G$, then $(\pi,\delta_{f(\cdot)})$ is a relaxed Walrasian equilibrium for $\E^G_\mathcal{R}$. Conversely, if $(\pi,\lambda)$ is a relaxed Walrasian equilibrium for $\E^G_\mathcal{R}$, then there exists a Walrasian equilibrium $(\pi,f)$ for $\E^G$ such that $\lambda(t)\,{\sim}_\mathcal{R}(t)\,\delta_{f(t)}$ a.e.\ $t\in T$ whenever $(T,\Sigma,\mu)$ is saturated. 
\end{prop}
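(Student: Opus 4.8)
The plan is to transcribe the proof of Proposition~\ref{eqv1} line by line, replacing every Bochner integral by a Gelfand integral and invoking the $L^\infty$ purification principle of Proposition~\ref{PP2} in place of Proposition~\ref{PP1}. Since $(\Omega,\F,\nu)$ is countably generated, $L^1(\nu)$ is separable, so the weak$^*$ compact set $X$ is a compact Polish space for the weak$^*$ topology; hence the Carath\'eodory representations \eqref{rp1}--\eqref{rp2}, the relaxed demand sets, and Proposition~\ref{thm1} (the identity $P\in D_\mathcal{R}(t,\pi)\Leftrightarrow P(D(t,\pi))=1$) are all available for $\E^G$ exactly as stated in the text.

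For the forward direction I would simply verify the definitions. If $(\pi,f)$ is a Walrasian equilibrium for $\E^G$, then $\int_X\imath_X(x)\delta_{f(t)}(dx)=f(t)$ gives $\int_T\int_X\imath_X(x)\delta_{f(t)}(dx)d\mu=\int_Tf(t)d\mu$ in the Gelfand sense, so $\delta_{f(\cdot)}$ inherits the market-clearing constraint of Definition~\ref{Ball} and is a relaxed allocation; budget feasibility $\delta_{f(t)}\in B_\mathcal{R}(t,\pi)$ is immediate from $\langle\pi,f(t)\rangle\le\langle\pi,\omega(t)\rangle$; and since $f(t)\in D(t,\pi)$ forces $\delta_{f(t)}(D(t,\pi))=1$, Proposition~\ref{thm1} yields $\delta_{f(t)}\in D_\mathcal{R}(t,\pi)$, i.e.\ $\succsim_\mathcal{R}(t)$-maximality in $B_\mathcal{R}(t,\pi)$. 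This is precisely the assertion that $(\pi,\delta_{f(\cdot)})$ is a relaxed Walrasian equilibrium.

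For the converse I would purify. Given a relaxed Walrasian equilibrium $(\pi,\lambda)$, Proposition~\ref{thm1} turns $\lambda(t)\in D_\mathcal{R}(t,\pi)$ a.e.\ into $\lambda(t)(D(t,\pi))=1$ a.e. Setting $U(t):=D(t,\pi)$ and $\Phi(t,x):=\imath_X(x)$ regarded as an $L^\infty(\nu)$-valued map, $\Phi$ is integrably bounded (because the weak$^*$ compact $X$ is norm bounded) and $\Phi(t,\cdot)$ is weak$^*$-to-weak$^*$ continuous as the identity inclusion of $X$. Proposition~\ref{PP2}, applicable since $(T,\Sigma,\mu)$ is saturated and $(\Omega,\F,\nu)$ countably generated, then delivers $f\in\M(T,X)$ with $f(t)\in D(t,\pi)$ a.e.\ and
$$
\int_T\int_X\imath_X(x)\lambda(t,dx)d\mu=\int_Tf(t)d\mu
$$
as Gelfand integrals. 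I would then read off that $(\pi,f)$ is a Walrasian equilibrium: $f$ is measurable and bounded, hence lies in $G^1(\mu,L^\infty(\nu))$; the displayed identity together with $\lambda\in\A(\E^G_\mathcal{R})$ gives $\int_Tf\,d\mu-\int_T\omega\,d\mu\in C$, so $f\in\A(\E^G)$; applying Proposition~\ref{thm1} once more, $f(t)\in D(t,\pi)$ gives $\delta_{f(t)}\in D_\mathcal{R}(t,\pi)\subseteq B_\mathcal{R}(t,\pi)$, which forces $\langle\pi,f(t)\rangle\le\langle\pi,\omega(t)\rangle$, i.e.\ $f(t)\in B(t,\pi)$, while $f(t)\in D(t,\pi)$ gives $f(t)\,\succsim(t)\,x$ for all $x\in B(t,\pi)$. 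Finally $\lambda(t)$ and $\delta_{f(t)}$ are both $\succsim_\mathcal{R}(t)$-maximal over the common budget $B_\mathcal{R}(t,\pi)$, so completeness and transitivity give $\lambda(t)\,\sim_\mathcal{R}(t)\,\delta_{f(t)}$ a.e.

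The hard part will be the single hypothesis of Proposition~\ref{PP2} that is \emph{not} automatic here, namely $\mathrm{gph}\,U\in\Sigma\otimes\mathrm{Borel}(X)$ for $U=D(\cdot,\pi)$. In the Banach case of Proposition~\ref{eqv1} this is routine because $p\in E^*$ makes $\langle p,\cdot\rangle$ weakly continuous on $X$, so the budget correspondence is weakly compact valued with measurable graph and the measurable maximum theorem renders $v(t):=\sup_{y\in B(t,\pi)}u(t,y)$ measurable, whence $\mathrm{gph}\,D=\{(t,x):u(t,x)\ge v(t)\}$ is measurable. In the $L^\infty$ setting the price $\pi\in\mathit{ba}(\nu)$ need \emph{not} be weak$^*$ continuous on $L^\infty(\nu)$, so neither $x\mapsto\langle\pi,x\rangle$ on $X$ nor $t\mapsto\langle\pi,\omega(t)\rangle$ is covered by the ambient continuity and weak$^*$ scalar measurability, and this is exactly where the transcription is not literal. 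The plan is to recover Borel measurability of $\langle\pi,\cdot\rangle$ on the compact metric space $X$ (exploiting separability of $L^1(\nu)$ to approximate $\pi$ on $X$ and realize $\langle\pi,\cdot\rangle$ restricted to $X$ as a pointwise limit of weak$^*$ continuous functionals), which then makes $t\mapsto\langle\pi,\omega(t)\rangle=\langle\pi,\cdot\rangle\circ\omega$ measurable, the budget correspondence graph-measurable, and $v$ measurable via the measurable maximum theorem; with this in hand the remaining steps are the bookkeeping the text alludes to, replacing norm bounds and Bochner integrals by weak$^*$ scalar bounds and Gelfand integrals.
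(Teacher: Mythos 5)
Your overall architecture is exactly the paper's: the paper ``proves'' Proposition~\ref{eqv2} by declaring its proof to be the same as that of Proposition~\ref{eqv1} with Bochner integrals replaced by Gelfand ones, which is precisely your transcription --- forward direction via Proposition~\ref{thm1}, converse via Proposition~\ref{PP2} applied to $U=D(\cdot,\pi)$ and $\Phi=\imath_X$, plus the indifference bookkeeping at the end. To your credit, you noticed the one step where the transcription is \emph{not} literal: Proposition~\ref{PP2} requires $\mathrm{gph}\,D(\cdot,\pi)\in\Sigma\otimes\mathrm{Borel}(X)$, and for a general price $\pi\in\mathit{ba}(\nu)$ the functional $\langle \pi,\cdot\rangle$ is not weak$^*$ continuous, so neither the Carath\'eodory property of the budget constraint nor the measurability of $t\mapsto\langle\pi,\omega(t)\rangle$ comes for free; the paper passes over this in silence.

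However, your proposed repair of that step cannot work. You want to realize $\langle\pi,\cdot\rangle$ restricted to $X$ as a pointwise limit of a sequence $\langle\varphi_n,\cdot\rangle$ with $\varphi_n\in L^1(\nu)$, concluding Borel (Baire class~1) measurability. No such sequence exists in general, and indeed $\langle\pi,\cdot\rangle|_X$ need not be Borel at all. Take $\nu$ to be counting measure on $\mathbb{N}$ (countably generated, $\sigma$-finite), so $L^\infty(\nu)=\ell^\infty$; take $X=\{0,1\}^{\mathbb{N}}\subset\ell^\infty_+$, which is weak$^*$ compact and, in its weak$^*$ topology, homeomorphic to the Cantor space; and let $\pi\in\mathit{ba}_+(\nu)\setminus\{0\}$ be the limit along a free ultrafilter $\mathcal{U}$ on $\mathbb{N}$. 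Then $\langle\pi,\cdot\rangle|_X$ is the indicator function of $\mathcal{U}$ viewed as a subset of $\{0,1\}^{\mathbb{N}}$, which by the classical zero--one law argument is not Lebesgue measurable, hence not Borel, hence not a pointwise limit of continuous functions. (Approximation pointwise on all of $L^\infty(\nu)$ is likewise impossible for any purely finitely additive $\pi$: $\ell^\infty$ is a Grothendieck space, so a weak$^*$ convergent \emph{sequence} in $\mathit{ba}(\nu)$ converges weakly, and since $L^1(\nu)$ is weakly closed in $\mathit{ba}(\nu)$ the limit stays in $L^1(\nu)$. Lemma~\ref{lem7} asserts weak$^*$ density, which is a statement about nets; the paper's own Observation following that lemma slips on exactly this sequence--versus--net point.) Consequently, for genuinely finitely additive prices the sets $B(t,\pi)$ and $D(t,\pi)$ can fail to be Borel, the defining integral of $B_\mathcal{R}(t,\pi)$ need not even make sense, and no measurability argument can rescue the purification step. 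The honest resolution is to prove the proposition for countably additive prices $\pi\in L^1(\nu)$: then $\langle\pi,\cdot\rangle$ is weak$^*$ continuous, your transcription is literal and complete, and nothing is lost for the paper's purposes, since Theorem~\ref{WE3} invokes Proposition~\ref{eqv2} only after the Yosida--Hewitt argument has already delivered an equilibrium price lying in $L^1(\nu)$.
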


Furthermore, under the same hypothesis with Proposition \ref{eqv2}, the density property in $L^\infty(\nu)$ is also valid for the Gelfand integral setting, i.e., $\A(\E^G_\mathcal{R})=\overline{\A(\E^G)}^{\mathit{\,w}}$ and $\W(\E^G_\mathcal{R})=\overline{\W(\E^G)}^{\mathit{\,w}}$. The proof of this fact is same with that of Proposition \ref{dens1} if one simply replaces the Bochner integrals by Gelfand ones invoking Proposition \ref{PP2}.

Next, we turn to the analogues of our substantive Theorems \ref{RWE1} on the  existence of (relaxed) Walrasian equilibria in large economies with free disposal modeled on $L^\infty(\nu)$ as a commodity space. Since the norm interior of the positive cone $L^\infty_+(\nu)$ of $L^\infty(\nu)$ is nonempty, under the additional assumption below, we can recover every result in Subsection \ref{subsec1} for the case with $E=L^\infty(\nu)$ and $E^*=\mathit{ba}(\nu)$ with the suitable replacement of the weak topology by the weak$^*$ topology and the Bochner integrals by the Gelfand integrals. 

\begin{assmp}
\label{assmp5}
\begin{enumerate}[(i)]
\item $X$ is a weakly$^*\!$ compact subset of $L^\infty_+(\nu)$. 
\item For every $t\in T$ there exists $z(t)\in X$ such that $\omega(t)-z(t)$ belongs to the norm interior of $L^\infty_+(\nu)$. 
\item If $x\in X$ is a satiation point for ${\succsim}(t)$, then $x\ge \omega(t)$.  
\item If $x\in X$ is not a satiation point for ${\succsim}(t)$, then $x$ belongs to the weak$^*$ closure of the upper contour set $\{ y\in X\mid y\,{\succ}(t)\,x \}$. 
\end{enumerate}
\end{assmp}

While the norm dual $\mathit{ba}(\nu)$ of $L^\infty(\nu)$ is larger than $L^1(\nu)$, as emphasized in \citet{be72}, the price systems in $\mathit{ba}(\nu)$ lack a reasonable economic interpretation unless they belong to $L^1(\nu)$ (i.e., they are countably additive); see also \cite{mz91}. To derive positive equilibrium prices with free disposal in $L^1(\nu)$ for the relaxed economy from those in $\mathit{ba}(\nu)$, the Yosida--Hewitt decomposition of finitely additive measures is crucial in our framework, similar to \cite{be72,be91}. 

\begin{thm}
\label{WE3}
Let $(T,\Sigma,\mu)$ be a finite measure space and $(\Omega,\F,\nu)$ be a countably generated $\sigma$-finite measure space. Then for every economy $\E^G$ satisfying Assumptions \ref{assmp4} and \ref{assmp5}:
\begin{enumerate}[\rm (i)]
\item There exists a relaxed Walrasian equilibrium with free disposal for $\E^G_\mathcal{R}$ with a positive price in $L^1(\nu)$.
\item There exists a Walrasian equilibrium with free disposal for $\E^G$ with a positive price in $L^1(\nu)$ whenever $(T,\Sigma,\mu)$ is saturated. 
\end{enumerate}
\end{thm}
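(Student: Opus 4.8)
The plan is to prove part (i) in two stages and then read off part (ii) from the purification result already in hand. In the first stage I would produce a relaxed Walrasian equilibrium for $\E^G_\mathcal{R}$ whose price lives in the full norm dual $\mathit{ba}(\nu)\setminus\{0\}$; in the second stage I would pare that price down to its countably additive component, which lies in $L^1(\nu)$, and check that this component is still a positive equilibrium price. Part (ii) will then be an immediate appeal to Proposition \ref{eqv2}. Throughout, the weak topology is replaced by the weak$^*$ topology of $L^\infty(\nu)$ and Bochner integrals by Gelfand integrals, exactly as anticipated in the text preceding the statement.

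For the first stage I would run the same fixed-point (Gale--Nikaido--Debreu type) argument that underlies Theorem \ref{RWE1}(i), transported verbatim into the weak$^*$/Gelfand setting. This transport is legitimate: since $L^1(\nu)$ is separable, the weak$^*$ compact set $X$ of Assumption \ref{assmp4}(i) is metrizable and hence a compact Polish space, so $\Pi(X)$ is again a compact Polish space and the relaxed consumption set carries the same structure as in the Banach case. The decisive point is that the relaxed demand correspondence $\pi\mapsto D_\mathcal{R}(t,\pi)$ is, by its very construction, a closed convex-valued map into $\Pi(X)$; this convexity is precisely what lets the fixed-point argument proceed without any convexity hypothesis on ${\succsim}(t)$, and it is why the saturation (or even nonatomicity) hypothesis is not needed for part (i). I expect the one genuine technical hurdle here to be the failure of joint continuity of the valuation functional $(\pi,x)\mapsto\langle\pi,x\rangle$ on $(\mathit{ba}(\nu)\setminus\{0\})\times X$ in infinite dimensions, which is Bewley's third difficulty recalled in Footnote \ref{fn:diff}; I would surmount it by adapting the upper-hemicontinuity and selection technique of \cite{le13,po97} to the relaxed demand sets, using the interiority condition Assumption \ref{assmp5}(ii) to keep the relaxed budget correspondence lower-hemicontinuous and to force the equilibrium price to be positive.

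For the second stage, given a positive relaxed equilibrium price $\pi\in\mathit{ba}(\nu)\setminus\{0\}$, I would invoke the Yosida--Hewitt decomposition $\pi=\pi_c+\pi_p$ into a countably additive part $\pi_c$ and a purely finitely additive part $\pi_p$, following \cite{be72,be91}. The task is to show that $\pi_c$ alone is a positive equilibrium price. Assumption \ref{assmp5}(ii) gives $\langle\pi_c,\omega(t)\rangle>0$ for a.e.\ $t$, so $\pi_c\neq0$; the local-nonsatiation condition Assumption \ref{assmp5}(iv) together with the fact that all commodities sit in $L^\infty_+(\nu)$ is what guarantees that the purely finitely additive part contributes nothing to the binding budget constraints or to the expected-utility comparisons in equilibrium, so that each $\lambda(t)$ remains maximal in the $\pi_c$-budget set $B_\mathcal{R}(t,\pi_c)$. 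Because $\pi_c$ is countably additive of bounded variation and vanishes on $\nu$-null sets, the Radon--Nikodym theorem places it in $L^1(\nu)$, yielding part (i).

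Part (ii) is then immediate: since $(T,\Sigma,\mu)$ is saturated, Proposition \ref{eqv2} purifies the relaxed Walrasian equilibrium $(\pi_c,\lambda)$ for $\E^G_\mathcal{R}$ into a Walrasian equilibrium $(\pi_c,f)$ for $\E^G$ carrying the identical price $\pi_c\in L^1(\nu)$, which is exactly the assertion. I expect the Yosida--Hewitt step of the second stage to be the main obstacle, since it is precisely the point at which the special order structure of $L^\infty(\nu)$ and the free-disposal and local-nonsatiation hypotheses must be combined to discard the purely finitely additive part of the price without destroying individual optimality; the fixed-point existence of stage one, by contrast, is essentially a mechanical translation of the argument for Theorem \ref{RWE1}(i), once the weak$^*$ metrizability of $X$ and the Gelfand purification of Proposition \ref{PP2} are in place.
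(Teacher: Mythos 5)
Your overall architecture coincides with the paper's: your stage one is exactly the paper's intermediate Theorem \ref{RWE3} (the Gale--Nikaido argument run in the Gelfand/weak$^*$ setting with the enlarged demand correspondence of \cite{le13,po97}, via the analogues of Lemmas \ref{lem8} and \ref{lem9}), your stage two is the Yosida--Hewitt decomposition following \cite{be72,be91}, and your part (ii) is the appeal to Proposition \ref{eqv2}. The gap is in stage two, which you yourself flag as ``the main obstacle'' but then leave as an assertion. The claim that the purely finitely additive part ``contributes nothing to the binding budget constraints or to the expected-utility comparisons'' is precisely what has to be proved, and it is not automatic: replacing $\pi$ by its countably additive part $\pi_c$ lowers both the cost $\int\langle \cdot,\imath_X(x) \rangle dP$ of every lottery \emph{and} the wealth $\langle \cdot,\omega(t) \rangle$, so the budget sets $B_\mathcal{R}(t,\pi)$ and $B_\mathcal{R}(t,\pi_c)$ are not nested in either direction, and a lottery rejected as unaffordable at $\pi$ may well become affordable at $\pi_c$. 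Moreover, your argument that $\pi_c\ne 0$ is circular: Assumption \ref{assmp5}(ii) yields $\langle \pi_c,\omega(t) \rangle>0$ only \emph{after} one knows that $\pi_c$ is a nonzero positive functional; it cannot be the source of that fact (a priori $\pi$ could be purely finitely additive, making $\pi_c=0$).

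What is missing is the approximation construction the paper carries out. Yosida--Hewitt gives not only $\pi=\pi_1+\pi_2$ but also a sequence $\{ \Omega_n \}$ in $\F$ with $\Omega_n\subset \Omega_{n+1}$, $\pi_1(\Omega\setminus \Omega_n)\to 0$ and $\pi_2(\Omega_n)=0$. Given $P\,{\succ}_\mathcal{R}(t)\,\lambda(t)$, one sets $X_n=\{ \psi\in X\mid \psi=0 \text{ on } \Omega\setminus\Omega_n \}$, conditions $P$ on $X_n$ to obtain $P_n\to P$ in $\Pi(X)$, and uses the continuity of ${\succsim}_\mathcal{R}(t)$ to retain, for all large $n$, both $P_n\,{\succ}_\mathcal{R}(t)\,\lambda(t)$ and the strict budget violation at $\pi$. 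Since $X_n$ is weakly$^*$ closed and convex, the barycenter $\psi_n=\int\imath_XdP_n$ lies in $X_n$, whence $\langle \pi_2,\psi_n \rangle=0$ and $\langle \pi,\psi_n \rangle=\langle \pi_1,\psi_n \rangle$; combining this with $\int\imath_XdP\ge \psi_n$ and the positivity of $\pi_1$ yields $\int\langle \pi_1,\imath_X(x) \rangle dP\ge \langle \pi,\psi_n \rangle>\langle \pi,\omega(t) \rangle\ge \langle \pi_1,\omega(t) \rangle$, which simultaneously establishes the optimality of $\lambda(t)$ at the price $\pi_1$ and, because $\langle \pi,\omega(t) \rangle>0$, the fact that $\pi_1\ne 0$. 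Without this construction (or some substitute for it), your stage two does not go through --- and stage two is the only part of Theorem \ref{WE3}(i) that is not already contained in Theorem \ref{RWE3}.
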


The second assertion of Theorem \ref{WE3} removes the convexity and monotonicity of preferences from \cite{be72,be91} and introduces free disposability for Walrasian equilibria with the commodity space of $L^\infty(\nu)$.

\section{Concluding Summary} 
In this paper we have presented three results as our contribution to the existence question of classical Walrasian general equilibrium with a continuum of commodities and a finite or a continuum of agents. We have developed these results through four propositions emphasizing  \lq\lq convexification by randomization'' as opposed to \lq\lq convexification by aggregation'', and drawn on relaxation techniques pioneered, and now pervasive, in optimal control theory as well as in statistical decision-theory, and the theory of non-cooperative games.   To be sure, the substantive power of these techniques lies in their being supplemented by a purification principle that eliminates randomization in the testing of statistical hypothesis and in the replacement of mixed strategies by their \lq\lq equivalent'' pure strategies in both non-atomic and atomic game theory. All this being said, our thrust is squarely on classical Walrasian general equilibrium theory as formulated by Aumann in 1964--1966.
It is this focus that leads us to ignore a possible  third convexification procedure due to Hart--Hildenbrand--Kohlberg; see \cite{hi74} and its references. This approach   involves a substantive  shift from an anonymous to a non-anonymous form for both games and economies, and a technical shift from measurable functions to their induced distributions.  It substitutes a \lq\lq symmetrization principle" for the purification principle. We leave a consideration of this developing and rich literature for future work.

\appendix
\section{Appendix 1}
\subsection{Proof of Proposition \ref{thm1}}
Choose any $P\in D_\mathcal{R}(t,p)$. Given the preference representation \eqref{rp2}, if $P(D(t,p))<1$, then 
\begin{equation}
\label{eq1}
\int_Xu(t,x)dP=\int_{D(t,p)}u(t,x)dP+\int_{X\setminus D(t,p)}u(t,x)dP<\max_{y\in B(t,p)}u(t,y)
\end{equation}
because $u(t,x)=\max_{y\in B(t,p)}u(t,y)$ for every $x\in D(t,p)$ and $u(t,x)<\max_{y\in B(t,p)}u(t,y)$ for every $x\in X\setminus D(t,p)$. On the other hand, for every $y\in B(t,p)$ we have
$$
\int_Xu(t,x)dP=\max_{Q\in B_\mathcal{R}(t,p)}\int_Xu(t,x)dQ\ge \int_Xu(t,x)d\delta_y=u(t,y)
$$
in view of $\delta_y\in B_\mathcal{R}(t,p)$. Hence, we obtain a contradiction because of $\int u(t,x)dP\ge \max_{y\in B(t,p)}u(t,y)$. 

For the converse implication, suppose that $P(D(t,p))=1$. Since $\langle p,x\rangle\le \langle p,\omega(t) \rangle$ for every $x\in D(t,p)$, we have 
$$
\int_X\langle p,\imath_X(x) \rangle dP=\int_{D(t,p)}\langle p,\imath_X(x) \rangle dP\le \int_{D(t,p)}\langle p,\omega(t) \rangle dP=\langle p,\omega(t) \rangle. 
$$
Thus, if $P$ does not belong to $D_\mathcal{R}(t,p)$, then there exists $Q\in B_\mathcal{R}(t,p)$ such that $\int u(t,x)dQ>\int u(t,x)dP$. Note also that  
$$
\int_Xu(t,x)dP=\int_{D(t,p)}u(t,x)dP=\int_{D(t,p)}\max_{y\in B(t,p)}u(t,y)dP=\max_{y\in B(t,p)}u(t,y). 
$$
Furthermore, $Q(D(t,p))=1$ a.e.\ $t\in T$; for otherwise, we have $\int u(t,x)dQ<\max_{y\in B(t,p)}u(t,y)$ as derived in \eqref{eq1}, a contradiction. We thus obtain
\begin{align*}
\int_Xu(t,x)dQ=\int_{D(t,p)}u(t,x)dQ
& =\int_{D(t,p)}\max_{y\in B(t,p)}u(t,y)dQ=\max_{y\in B(t,p)}u(t,y). 
\end{align*}
This is a contradiction to the initial hypothesis. Therefore, $P\in D_\mathcal{R}(t,p)$. \qed

\subsection{Proof of Proposition \ref{eqv1}}
Pick any Walrasian equilibrium $(p,f)$ for $\E$. If the price-relaxed allocation pair $(p,\delta_{f(\cdot)})$ is not a relaxed Walrasian equilibrium for $\E_\mathcal{R}$, then there exists $A\in \Sigma$ of positive measure such that for every $t\in A$ there exists $P\in B_\mathcal{R}(t,p)$ with $P\,{\succ}_\mathcal{R}(t)\,\delta_{f(t)}$. Given the preference representation \eqref{rp2}, this means the inequality $\int u(t,x)dP>u(t,f(t))=\max_{y\in B(t,p)}u(t,y)$. We then have $P(D(t,p))=1$ for every $t\in A$; for otherwise, $\int u(t,x)dP<\max_{y\in B(t,p)}u(t,y)$ for some $t\in A$, a contradiction. On the other hand, the equalities
$$
\int_Xu(t,x)dP=\int_{D(t,p)}u(t,x)dP=\max_{y\in B(t,p)}u(t,y)
$$
for every $t\in T$ yield a contradiction to the above inequality. Therefore, $(p,\delta_{f(\cdot)})$ is a relaxed Walrasian equilibrium for $\E_\mathcal{R}$. 

Take any relaxed Walrasian equilibrium $(p,\lambda)$ for $\E_\mathcal{R}$. Let $\mathrm{gph}\,B(\cdot,p)$ (resp.\ $\mathrm{gph}\,D(\cdot,p))$ be the graph of the multifunction $B(\cdot,p):T\twoheadrightarrow X$ (resp.\ $D(\cdot,p):T\twoheadrightarrow X$) and denote by $\mathrm{Borel}(E,\mathit{w})$ the Borel $\sigma$-algebra generated by the weak topology of $E$. Since 
$$
D(t,p)=\left\{ x\in X\mid u(t,x)=\max_{y\in B(t,p)}u(t,y) \right\}
$$
with $\mathrm{gph}\,B(\cdot,p)\in \Sigma\otimes \mathrm{Borel}(E,\mathit{w})$, the measurable maximum theorem (see \citet[Proposition 3, p.\,60]{hi74}) guarantees that $\mathrm{gph}\,D(\cdot,p)\in \Sigma\otimes \mathrm{Borel}(E,\mathit{w})$. By Proposition \ref{thm1}, we have $\lambda(t)(D(t,p))=1$ a.e.\ $t\in T$. It follows from Proposition \ref{PP1} that there exists $f\in \mathcal{M}(T,X)\subset L^1(\mu,E)$ with $f(t)\in D(t,p)$ a.e.\ $t\in T$ such that $\int fd\mu=\iint\imath_X\lambda(t,dx)d\mu$. Therefore, $(p,f)$ is a Walrasian equilibrium for $\E$. Since $\lambda(t)(D(t,p))=1$ a.e.\ $t\in T$ by Theorem \ref{thm1}, we have $\int u(t,x)\lambda(t,dx)=\max_{y\in B(t,p)}u(t,y)=u(t,f(t))$ a.e.\ $t\in T$. Therefore, $\lambda(t)\,{\sim}_\mathcal{R}(t)\,\delta_{f(t)}$ a.e.\ $t\in T$. \qed

\subsection{Proof of Proposition \ref{dens1}}
Let $\lambda_0\in \A(\E_\mathcal{R})$ be arbitrarily and $\mathcal{N}_0$ be its any  neighborhood. By definition of the weak topology, there exists $u_1,\dots,u_k$ in $\C^1(T\times X,\mu)$ such that $|I_{u_i}(\nu)-I_{u_i}(\nu_0)|<1$, $i=1,\dots,k$ implies $\nu\in \mathcal{N}_0$. Define $\Phi:T\times X\to E\times \R^k$ in Proposition \ref{PP1} by $\Phi=(\imath_X,u_1,\dots,u_k)$. Then there exists $f\in \M(T,X)$ such that $\iint\Phi(t,x)\lambda(t,dx)d\mu=\int \Phi(t,f(t))d\mu$. This means that $f\in L^1(\mu,E)$, $\iint\imath_X(x)\lambda(t,dx)d\mu=\int fd\mu$, and $I_{u_i}(\nu_0)=I_{u_i}(\delta_{f})$ for $i=1,\dots,k$. Therefore, $f\in \A(\E)$ and $\delta_f\in \mathcal{N}_0$. Since the choice of $\lambda_0$ and $\mathcal{N}_0$ is arbitrary, $\A(\E)$ is dense in $\A(\E_\mathcal{R})$. Next, let $\lambda_0\in \W(\E_\mathcal{R})$ be arbitrarily. Since $\lambda_0\in D_\mathcal{R}(t,p)$ a.e.\ $t\in T$ for some $p\in E^*\setminus \{ 0 \}$, it follows from Proposition \ref{thm1} that $\lambda_0(t)(D(t,p))=1$ a.e.\ $t\in T$. Let $U(t)\equiv D(t,p)$ in Proposition \ref{PP1} and $\mathcal{N}_0$ be any neighborhood of $\lambda_0$. Then as in the above there exists $f\in \A(\E)$ with $f(t)\in D(t,p)$ a.e.\ $t\in T$ such that $I_{u_i}(\nu_0)=I_{u_i}(\delta_{f})$ for $i=1,\dots,k$. Therefore, $f\in \W(\E)$ and $\delta_f\in \mathcal{N}_0$, and hence, $\W(\E)$ is dense in $\W(\E_\mathcal{R})$. \qed

\subsection{Proof of Theorem \ref{RWE1}}
\label{subsec2}
The set of normalized price functionals is given by $S^*=\{ p\in E^*_+\mid \langle p,v \rangle=1 \}$, where $v\in E_+$ is taken from the norm interior of $E_+$. Then the  Banach--Alaoglu theorem guarantees that $S^*$ is weakly$^*\!$ compact; see \citet[p.\,1859]{mz91}. 

\begin{lem}
\label{lem1}
$D_\mathcal{R}:T\times S^*\twoheadrightarrow \Pi(X)$ is a compact, convex-valued multifunction with $\mathrm{gph}\,D_\mathcal{R}(\cdot,p)\in \Sigma\otimes\mathrm{Borel}(\Pi(X))$ for every $p\in S^*$. 
\end{lem}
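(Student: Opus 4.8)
The plan is to treat $D_{\mathcal R}(t,p)$, for fixed $p$, as the set of maximizers of an affine continuous functional over the relaxed budget set, and to read off all three properties from this description. First I would record that, since $X$ is a compact Polish space, $\Pi(X)$ is itself compact and Polish, and that the two functionals governing the problem,
$$
P\mapsto \int_X\langle p,\imath_X(x)\rangle\,dP \quad\text{and}\quad P\mapsto \int_X u(t,x)\,dP,
$$
are weakly continuous on $\Pi(X)$. Indeed $x\mapsto\langle p,x\rangle$ is weakly continuous and bounded on the norm-bounded weakly compact set $X$, hence lies in $C_b(X)$, and for fixed $t$ the integrably bounded Carath\'eodory function $u(t,\cdot)$ likewise lies in $C_b(X)$; continuity then follows from the very definition of the topology of weak convergence of probability measures.

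Second I would extract convexity and compactness. The relaxed budget set $B_{\mathcal R}(t,p)$ is the preimage of $(-\infty,\langle p,\omega(t)\rangle]$ under the first (affine, continuous) functional, hence a closed, convex subset of the compact space $\Pi(X)$, so it is compact and convex; it is nonempty because $\delta_{\omega(t)}\in B_{\mathcal R}(t,p)$. Since the objective $P\mapsto\int_X u(t,x)\,dP$ is affine and continuous, it attains its maximum on $B_{\mathcal R}(t,p)$, and its set of maximizers $D_{\mathcal R}(t,p)$ is a closed (hence compact) face of $B_{\mathcal R}(t,p)$, which is convex. This disposes of the compact- and convex-valuedness.

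The hard part will be the graph measurability of $t\mapsto D_{\mathcal R}(t,p)$ for fixed $p$. My plan is to invoke the measurable maximum theorem on $T\times\Pi(X)$, exactly as in the proof of Proposition \ref{eqv1}. For this I must verify two inputs. First, that $v(t,P):=\int_X u(t,x)\,dP$ is a Carath\'eodory function: continuity in $P$ was noted above, while measurability in $t$ for fixed $P$ follows from the joint measurability of $u$, its integrable bound $\varphi$, and Fubini's theorem applied to the finite measure $\mu\otimes P$. Second, that $B_{\mathcal R}(\cdot,p)$ has $\Sigma\otimes\mathrm{Borel}(\Pi(X))$-measurable graph, because $(t,P)\mapsto\int_X\langle p,\imath_X(x)\rangle\,dP-\langle p,\omega(t)\rangle$ is again Carath\'eodory — continuous in $P$ and measurable in $t$ by Bochner measurability of $\omega$ — hence jointly measurable, so the graph is the preimage of $(-\infty,0]$; being compact-valued over a complete $(T,\Sigma,\mu)$, it is a measurable correspondence. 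Feeding these into the measurable maximum theorem yields $\mathrm{gph}\,D_{\mathcal R}(\cdot,p)\in\Sigma\otimes\mathrm{Borel}(\Pi(X))$. I expect the only delicate point to be the joint-measurability verifications; a fully self-contained alternative would be to use Proposition \ref{thm1} to rewrite the graph as $\{(t,P):P(D(t,p))=1\}$ and then prove that $(t,P)\mapsto P(D(t,p))$ is jointly measurable by a monotone-class argument, starting from the already-established measurability of $\mathrm{gph}\,D(\cdot,p)$.
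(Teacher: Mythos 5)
Your proposal is correct and follows essentially the same route as the paper's proof: compactness and convexity of $D_\mathcal{R}(t,p)$ from the continuity and affinity of $P\mapsto\int_X u(t,x)\,dP$ on the compact convex set $B_\mathcal{R}(t,p)$, and graph measurability by showing that both $(t,P)\mapsto\int_X\langle p,\imath_X(x)\rangle\,dP-\langle p,\omega(t)\rangle$ and $(t,P)\mapsto\int_X u(t,x)\,dP$ are Carath\'eodory (hence jointly measurable) and then invoking the measurable maximum theorem of \citet[Proposition 3, p.\,60]{hi74}. The extra verifications you flag (Fubini for $t$-measurability, nonemptiness of the budget set via $\delta_{\omega(t)}$) are details the paper leaves implicit, not deviations.
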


\begin{proof}
The compactness and convexity of $D_\mathcal{R}(t,p)$ follows from the continuity and affinity of the relaxed utility function $P\mapsto \int u(t,x)dP$ in \eqref{rp2}. Fix $p\in S^*$ arbitrarily and define $\theta_p:T\times \Pi(X)\to \R$ by $\theta_p(t,P)=\int\langle p,\imath_X(x) \rangle dP-\langle p,\omega(t) \rangle$. Then $t\mapsto \theta_p(t,P)$ is measurable for every $P\in \Pi(X)$. Since $x\mapsto \langle p,\imath_X(x) \rangle$ is a bounded continuous function on $X$, the function $P\mapsto \theta_p(t,P)$ is continuous for every $t\in T$ in view of the definition of the topology of weak convergence of probability measures. Thus, $\theta_p$ is a Carath\'eodory function, and hence, it is jointly measurable. Denote by $\mathrm{Borel}(\Pi(X))$ the Borel $\sigma$-algebra of $\Pi(X)$. We then have   
\begin{align*}
\mathrm{gph}\,B_\mathcal{R}(\cdot,p)
& =\left\{ (t,P)\in T\times \Pi(X) \mid \theta_p(t,P)\le 0\right\}\in \Sigma\otimes\mathrm{Borel}(\Pi(X)).
\end{align*} 
For the sake notational simplicity, define $\tilde{u}:T\times \Pi(X)\to \R$ by $\tilde{u}(t,P)=\int u(t,x)dP$. Then $\tilde{u}$ is a Carath\'eodory function, and hence, it is jointly measurable. Given the representation of the relaxed preferences in \eqref{rp2}, we have 
\begin{align*}
D_\mathcal{R}(t,p)=\left\{ P\in B_\mathcal{R}(t,p) \mid \tilde{u}(t,P)=\max_{Q\in B_\mathcal{R}(t,p)}\tilde{u}(t,Q) \right\}.
\end{align*}
Hence, by the measurable maximum theorem (see \citet[Proposition 3, p.\,60]{hi74}), we have $\mathrm{gph}\,D_\mathcal{R}(\cdot,p)\in \Sigma\otimes\mathrm{Borel}(\Pi(X))$. 
\end{proof}

A difficulty might arise in the derivation of the upper semicontinuity of $p\mapsto D_\mathcal{R}(t,p)$ because of the failure of the joint continuity of the valuation functional $(p,P)\mapsto \int \langle p,\imath_X(x) \rangle dP$ on $S^*\times \Pi(X)$ whenever $S^*$ is endowed with the weak$^*\!$ topology of $E^*$, which is analogous to the well-known failure of the joint continuity of the valuational functional $(p,x)\mapsto \langle p,x \rangle$ on $S^*\times E$ whenever $E$ is endowed with the weak topology; e.g., see \cite{mz91}. To overcome the difficulty of the upper semicontinuity of $D_\mathcal{R}(t,\cdot):S^*\twoheadrightarrow \Pi(X)$, we ``enlarge'' the relaxed demand set by introducing the multifunction $\Gamma:T\times S^*\twoheadrightarrow \Pi(X)$ defined by 
$$
\Gamma(t,p)=\{ P\in \Pi(X) \mid P\,{\succsim}_\mathcal{R}(t)\,Q \ \forall Q\in B_\mathcal{R}(t,p) \} 
$$
adapting the device used in \cite{le13,po97} to the relaxation framework.\footnote{See also \cite{ky91} for another technique to evade the difficulty of joint continuity in the original economy.} By definition, $D_\mathcal{R}(t,p)\subset \Gamma(t,p)$ for every $(t,p)\in T\times S^*$. 

\begin{lem}
\label{lem2}
$\Gamma:T\times S^*\twoheadrightarrow \Pi(X)$ is a compact, convex-valued multifunction with $\mathrm{gph}\,\Gamma(\cdot,p)\in \Sigma\otimes\mathrm{Borel}(\Pi(X))$ for every $p\in S^*$ such that $p\mapsto \Gamma(t,p)$ is upper semicontinuous for the weak$^*\!$ topology of $S^*$ for every $t\in T$.
\end{lem}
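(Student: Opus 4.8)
The plan is to establish the three assertions separately, leaving the upper semicontinuity---the only genuinely delicate point---for last. Throughout I would work with the representation coming from \eqref{rp2}: writing $\tilde u(t,P)=\int u(t,x)dP$ as in the proof of Lemma \ref{lem1},
$$
\Gamma(t,p)=\Bigl\{P\in\Pi(X)\mid \tilde u(t,P)\ge \max_{Q\in B_\mathcal{R}(t,p)}\tilde u(t,Q)\Bigr\}.
$$
For convexity and compactness of the values, I would note that, since $u(t,\cdot)$ is bounded and continuous on the compact set $X$, the functional $\tilde u(t,\cdot)$ is affine and continuous on $\Pi(X)$ for the topology of weak convergence; hence $\Gamma(t,p)$, a super-level set of an affine continuous functional, is convex, and, being closed in the compact space $\Pi(X)$, is compact. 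Nonemptiness is immediate from $D_\mathcal{R}(t,p)\subset\Gamma(t,p)$.

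For the measurability of $\mathrm{gph}\,\Gamma(\cdot,p)$ I would reuse the argument of Lemma \ref{lem1} essentially verbatim. Fixing $p\in S^*$, Lemma \ref{lem1} gives $\mathrm{gph}\,B_\mathcal{R}(\cdot,p)\in\Sigma\otimes\mathrm{Borel}(\Pi(X))$, and $\tilde u$ is a Carath\'eodory, hence jointly measurable, function; so the measurable maximum theorem (\citet[Proposition 3, p.\,60]{hi74}) makes $t\mapsto m(t):=\max_{Q\in B_\mathcal{R}(t,p)}\tilde u(t,Q)$ measurable. Since $(t,P)\mapsto\tilde u(t,P)-m(t)$ is then jointly measurable, $\mathrm{gph}\,\Gamma(\cdot,p)=\{(t,P):\tilde u(t,P)-m(t)\ge0\}$ lies in $\Sigma\otimes\mathrm{Borel}(\Pi(X))$.

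The substance of the lemma is the upper semicontinuity, and here the plan is to show that $\mathrm{gph}\,\Gamma(t,\cdot)$ is closed in $S^*\times\Pi(X)$ and then invoke the closed-graph characterization of upper semicontinuity for correspondences into a compact space (\citet[Theorem 17.11]{ab06}); this applies because separability of $E$ makes $S^*$ weak$^*$ compact and metrizable while $\Pi(X)$ is a compact metric space, so I may argue with sequences. Fixing $t$, I would take $p_n\to p$ in the weak$^*$ topology and $P_n\to P$ in $\Pi(X)$ with $P_n\in\Gamma(t,p_n)$, and aim to show $\tilde u(t,P)\ge\tilde u(t,Q)$ for every $Q\in B_\mathcal{R}(t,p)$. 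The decisive simplification afforded by the enlarged set $\Gamma$ (as opposed to $D_\mathcal{R}$) is that the test involves only the budget value of a \emph{fixed} competitor $Q$: its barycenter $\int\imath_X dQ$ is a fixed element of $E$, so $\langle p_n,\int\imath_X dQ\rangle\to\langle p,\int\imath_X dQ\rangle$ and $\langle p_n,\omega(t)\rangle\to\langle p,\omega(t)\rangle$ by weak$^*$ convergence. Hence whenever $Q$ satisfies the budget constraint \emph{strictly} at $p$, one has $Q\in B_\mathcal{R}(t,p_n)$ for all large $n$, so $P_n\in\Gamma(t,p_n)$ forces $\tilde u(t,P_n)\ge\tilde u(t,Q)$, and passing to the limit via continuity of $\tilde u(t,\cdot)$ gives $\tilde u(t,P)\ge\tilde u(t,Q)$.

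The hard part, and the only obstruction, will be the frontier competitors $Q\in B_\mathcal{R}(t,p)$ with $\langle p,\int\imath_X dQ\rangle=\langle p,\omega(t)\rangle$, for which the strict inequality is unavailable. To dispose of these I would invoke the cheaper-point hypothesis Assumption \ref{assmp2}(ii): since $\omega(t)-z(t)$ lies in the norm interior of $E_+$, every positive $p\in S^*$ is strictly positive there, so $\langle p,z(t)\rangle<\langle p,\omega(t)\rangle$. Setting $Q_\alpha=(1-\alpha)Q+\alpha\,\delta_{z(t)}$ for $\alpha\in(0,1]$ and using affinity of the barycenter, I obtain $\langle p,\int\imath_X dQ_\alpha\rangle=(1-\alpha)\langle p,\omega(t)\rangle+\alpha\langle p,z(t)\rangle<\langle p,\omega(t)\rangle$, so each $Q_\alpha$ is a strictly interior competitor and the previous step yields $\tilde u(t,P)\ge\tilde u(t,Q_\alpha)$; letting $\alpha\downarrow0$, whence $Q_\alpha\to Q$ in $\Pi(X)$, and using continuity of $\tilde u(t,\cdot)$ gives $\tilde u(t,P)\ge\tilde u(t,Q)$. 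Since $Q\in B_\mathcal{R}(t,p)$ was arbitrary, $P\in\Gamma(t,p)$, closing the graph and delivering the upper semicontinuity.
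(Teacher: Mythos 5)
Your proof is correct and follows essentially the same route as the paper's: the compactness/convexity and graph-measurability arguments coincide, and your upper semicontinuity argument rests on the same two ingredients the paper uses, namely that weak$^*$ convergence of prices against the \emph{fixed} barycenter $\int\imath_X\,dQ$ makes strictly affordable competitors eventually affordable, and that Assumption \ref{assmp2}(ii) lets one perturb a frontier competitor toward $\delta_{z(t)}$ to reduce to the strictly affordable case. The only differences are cosmetic: you argue directly (closed graph via \citet[Theorem 17.11]{ab06} with sequences, justified by metrizability of $S^*$) and perturb at the end, whereas the paper argues by contradiction with nets and perturbs the better point $Q$ at the outset.
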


\begin{proof}
The fact that $\Gamma$ has compact convex values is obvious. To show the upper semicontinuity, fix $t\in T$ arbitrarily and let $\{ p_\alpha \}$ be a net in $S^*$ with $p_\alpha \to p$ weakly$^*\!$ and choose any $P_\alpha\in \Gamma(t,p_\alpha)$ for each $\alpha$ with $P_\alpha\to P$. We need to show that $P\in \Gamma(t,p)$. Suppose, to the contrary, that $P\not\in \Gamma(t,p)$. Then there exists $Q\in \Pi(X)$ such that $Q\,{\succ}_\mathcal{R}(t)\,P$ and $\int\langle p,\imath_X(x) \rangle dQ\le \langle p,\omega(t) \rangle$. It follows from the continuity of ${\succsim}_\mathcal{R}(t)$ and Assumption \ref{assmp2}(ii) that $Q$ is taken such that $Q\,{\succ}_\mathcal{R}(t)\,P$ and $\int\langle p,\imath_X(x) \rangle dQ<\langle p,\omega(t) \rangle$. Thus, for all sufficiently large $\alpha$, we have $Q\,{\succ}_\mathcal{R}(t)\,P$ and $\langle p_\alpha,\int\imath_XdQ \rangle=\int\langle p_\alpha,\imath_X(x) \rangle dQ<\langle p,\omega(t) \rangle$, which contradicts the fact that $P_\alpha\in \Gamma(t,p_\alpha)$. Therefore, $P\in \Gamma(t,p)$. Since 
\begin{align*}
\Gamma(t,p)=\left\{ P\in \Pi(X) \mid \tilde{u}(t,P)\ge \max_{Q\in B_\mathcal{R}(t,p)}\tilde{u}(t,Q) \right\}
\end{align*}
and the marginal function $t\mapsto \max_{Q\in B_\mathcal{R}(t,p)}\tilde{u}(t,Q)$ is measurable by the measurable maximum theorem, we have $\mathrm{gph}\,\Gamma(\cdot,p)\in \Sigma\otimes\mathrm{Borel}(\Pi(X))$ for every $p\in S^*$. 
\end{proof}

\begin{lem}
\label{lem3}
Define the multifunction $I_\Gamma:T\times S^*\twoheadrightarrow E$ by
$$
I_\Gamma(t,p)=\left\{ \int_X\imath_X(x) dP\mid P\in \Gamma(t,p) \right\}.
$$
Then $I_\Gamma$ is a weakly compact, convex-valued multifunction such that its range $I_\Gamma(T\times S^*)$ is bounded and $p\mapsto I_\Gamma(t,p)$ is upper semicontinuous for the weak$^*\!$ topology of $S^*$ and the norm topology of $E$ for every $t\in T$.
\end{lem}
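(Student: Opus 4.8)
The plan is to obtain the first three conclusions directly from the elementary properties of the barycenter map $b\colon\Pi(X)\to E$, $b(P)=\int_X\imath_X(x)\,dP$, and then to reduce the semicontinuity statement to Lemma~\ref{lem2}. First I would note that $b$ is affine, so that $I_\Gamma(t,p)=b(\Gamma(t,p))$ inherits convexity from the convexity of $\Gamma(t,p)$ established in Lemma~\ref{lem2}. Next, $b$ is continuous from $\Pi(X)$ with the topology of weak convergence into $E$ with its weak topology, because for each $\phi\in E^*$ the map $P\mapsto\langle\phi,b(P)\rangle=\int_X\langle\phi,\imath_X(x)\rangle\,dP$ is the integral of the bounded continuous function $x\mapsto\langle\phi,\imath_X(x)\rangle$ on the compact set $X$; since $\Gamma(t,p)$ is compact in $\Pi(X)$, its image $I_\Gamma(t,p)$ is weakly compact. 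Boundedness of the range is then immediate: as $X$ is weakly compact it is norm bounded by some $M$, whence $\|b(P)\|\le\int_X\|\imath_X(x)\|\,dP\le M$ for every $P\in\Pi(X)$, so $I_\Gamma(T\times S^*)$ lies in the weakly compact set $\overline{\mathrm{co}}(X)$. Note that no measurability of the graph is asserted here, so this part needs no further work.

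For the upper semicontinuity of $p\mapsto I_\Gamma(t,p)$ I would fix $t$ and argue through the net characterisation of upper semicontinuity for compact-valued maps. Lemma~\ref{lem2} gives that $p\mapsto\Gamma(t,p)$ is upper semicontinuous (weakly$^*$ on $S^*$, weak convergence on $\Pi(X)$) with compact values, and $\Pi(X)$ is itself compact since $X$ is a compact Polish space. Thus, given a net $p_\alpha\to p$ weakly$^*$ and $y_\alpha=b(P_\alpha)\in I_\Gamma(t,p_\alpha)$, I may pass to a subnet along which $P_\alpha\to P$ in $\Pi(X)$, and the upper semicontinuity of $\Gamma$ forces $P\in\Gamma(t,p)$. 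The weak continuity of $b$ then yields $y_\alpha\to b(P)\in I_\Gamma(t,p)$ in the weak topology of $E$, which already establishes upper semicontinuity into the weak topology.

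The hard part will be the promotion of this to the norm topology of $E$, and I expect it to be the crux of the lemma. The obstruction is genuine: $b$ is only weak-to-weak continuous, and a weakly convergent net of barycenters $b(P_\alpha)\to b(P)$ need not converge in norm, precisely because the weakly compact set $X$ is typically not norm compact. To close the gap I would try to lean on the structural hypotheses of Assumption~\ref{assmp2} rather than on $b$ alone: the cheaper-point condition~(ii) makes the budget correspondence and the value function $p\mapsto\max_{Q\in B_\mathcal{R}(t,p)}\tilde u(t,Q)$ well behaved and forces the maximizing value to be attained on the budget frontier, while the local nonsatiation condition~(iv) should prevent the enlarged demand $\Gamma(t,\cdot)$ from loading mass on ``isolated'' bundles that are weakly close but norm far from the limit. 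Concretely, I would verify the open-set definition directly: assuming for contradiction a norm-open $O\supseteq I_\Gamma(t,p)$ and a net with $y_\alpha\in I_\Gamma(t,p_\alpha)\setminus O$, I would use~(ii) and~(iv) to argue that the maximizing measures $P_\alpha$ concentrate, in norm, around the support of the limit measure $P$, so that $y_\alpha\to b(P)\in O$ in norm, contradicting $y_\alpha\notin O$. It is exactly this conversion of weak convergence of barycenters into norm convergence, powered by the order-theoretic and nonsatiation assumptions that rule out norm jumps, that I expect to demand the most care; the weakly compact containment $I_\Gamma(T\times S^*)\subseteq\overline{\mathrm{co}}(X)$ would serve as the compactness backbone throughout.
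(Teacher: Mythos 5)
Your handling of convexity, weak compactness, and boundedness is correct and coincides with the paper's, and your route to the semicontinuity statement---compactness of $\Pi(X)$, extraction of a subnet $P_\alpha\to P$, the closed graph of $\Gamma(t,\cdot)$ from Lemma~\ref{lem2}, and weak-to-weak continuity of the barycenter map $b$---is actually cleaner than the paper's own argument. The paper instead observes that the barycenters lie in $\overline{\mathrm{co}}\,X$ (citing \citet[Corollary II.2.8]{du77}), applies Choquet's theorem (\citet[Proposition 1.2]{ph01}) to produce a representing measure $P$ for the limit point $y$, and then reruns the budget-perturbation contradiction from Lemma~\ref{lem2}; your argument needs none of this extra machinery.

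The genuine gap is your final paragraph, and it stems from a misreading of what ``upper semicontinuous for the weak$^*\!$ topology of $S^*$ and the norm topology of $E$'' asks for. As the paper's proof makes explicit, the property to be verified is the closed-graph (net) property: if $p_\alpha\to p$ weakly$^*\!$, $y_\alpha\in I_\Gamma(t,p_\alpha)$, and $y_\alpha\to y$ \emph{in norm}, then $y\in I_\Gamma(t,p)$. Norm convergence of $y_\alpha$ is a \emph{hypothesis} on the net, not a conclusion to be reached; since every norm-convergent net is weakly convergent, this assertion is weaker than the weak-topology closedness you already established, and your own step finishes the proof in one line: along a subnet $P_\alpha\to P\in\Gamma(t,p)$, so $y_\alpha\to b(P)$ weakly, while also $y_\alpha\to y$ weakly, whence $y=b(P)\in I_\Gamma(t,p)$. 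There is no ``promotion to the norm topology'' left to do. Moreover, the argument you sketch for this phantom step would not work: Assumptions~\ref{assmp2}(ii) and (iv) are about budget perturbation and nonsatiation and cannot force the maximizing measures to concentrate in norm; in an infinite-dimensional $E$ the weakly compact set $X$ typically contains weakly-but-not-norm convergent sequences, and nothing prevents $\Gamma(t,\cdot)$ from loading mass on them, so no hypothesis of the paper converts weak convergence of barycenters into norm convergence. Fortunately none is needed: downstream, Lemma~\ref{lem4} only requires upper semicontinuity of the support functionals $p\mapsto s(x^*,I_\Gamma(t,p))$, and that follows precisely from your weak-topology closed graph together with the containment $I_\Gamma(T\times S^*)\subseteq\overline{\mathrm{co}}\,X$ in a weakly compact set.
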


\begin{proof}
It follows from the weak compactness of $X$ that $\sup_{x\in X}\| x \|\le a$ for some $a\ge 0$. Thus, $\sup_{P\in \Gamma(t,p)}\|\int\imath_XdP\|\le a$ for every $(t,p)\in T\times S^*$. Hence, $I_\Gamma(T\times S^*)$ is bounded. Since $\Gamma(t,p)$ is a convex subset of $\Pi(X)$ by Lemma \ref{lem2}, the convexity of $I_\Gamma(t,p)$ is obvious. To show the weak compactness of $\Gamma(t,p)$, fix $(t,p)\in T\times S^*$ arbitrarily and choose a net  $y_\alpha\in I_\Gamma(t,p)$ for each $\alpha$. Then there exists $P_\alpha\in \Gamma(t,p)$ such that $y_\alpha=\int\imath_XdP_\alpha$ for each $\alpha$. Since $\Gamma(t,p)$ is compact by Lemma \ref{lem2}, we can extract a subnet from $\{ P_\alpha \}$ (which we do not relabel) converging to $P\in \Gamma(t,p)$. Hence, the barycenter $\int\imath_XdP$ belongs to $I_\Gamma(t,p)$. It follows from the definition of the topology of the weak convergence of probability measures that for every $x^*\in E^*$, we have 
$$
\langle x^*,y_\alpha \rangle=\int_X\langle x^*,\imath_X(x) \rangle dP_\alpha \to \int_X\langle x^*,\imath_X(x) \rangle dP=\left\langle x^*,\int_X\imath_X(x)dP \right\rangle
$$
because $x\mapsto \langle x^*,\imath_X(x) \rangle$ is a bounded continuous function on $X$. This means that $y_\alpha\to \int\imath_XdP$ weakly in $E$. Thus, $I_\Gamma(t,p)$ is weakly compact. 

To show the upper semicontinuity, fix $t\in T$ arbitrarily and let $\{ p_\alpha \}$ be a net in $S^*$ with $p_\alpha \to p$ weakly$^*\!$ and choose any $y_\alpha\in I_\Gamma(t,p_\alpha)$ for each $\alpha$ with $y_\alpha\to y$ strongly in $E$. We need to show that $y\in I_\Gamma(t,p)$. Suppose, to the contrary, that $y\not\in I_\Gamma(t,p)$. Then for each $\alpha$ there exists $P_\alpha\in \Gamma(t,p_\alpha)$ such that $y_\alpha=\int\imath_XdP_\alpha$. Denote by $\overline{\mathrm{co}}\,X$ be the closed convex hull of $X$. Then the barycenters $\int\imath_XdP_\alpha$ belong to $\overline{\mathrm{co}}\,X$; see \citet[Corollary II.2.8]{du77}. Hence, we have $y\in \overline{\mathrm{co}}\,X$. It follows from Choquet's theorem (see \citet[Proposition 1.2]{ph01}) that there exists $P\in \Pi(X)$ such that $\langle x^*,y \rangle=\int\langle x^*,\imath_X(x) \rangle dP=\langle x^*,\int\imath_XdP \rangle$ for every $x^*\in E^*$. This means that $y=\int\imath_XdP$. In view of $y\not\in I_\Gamma(t,p)$, we have $P\not\in \Gamma(t,p)$. As demonstrated in the proof of Lemma \ref{lem2}, there exists $Q\in \Pi(X)$ such that $Q\,{\succ}_\mathcal{R}(t)\,P$ and $\langle p_\alpha,\int\imath_XdQ \rangle=\int\langle p_\alpha,\imath_X(x) \rangle dQ<\langle p,\omega(t) \rangle$ for all sufficiently large $\alpha$, which contradicts the fact that $P_\alpha\in \Gamma(t,p_\alpha)$. Therefore, $y\in I_\Gamma(t,p)$.
\end{proof}

What is significant in the next lemma is that the upper semicontinuity of $p\mapsto \int I_\Gamma(t,p)d\mu$ is preserved under integration without any assumption on the finite measure space $(T,\Sigma,\mu)$ due to the fact that the upper semicontinuous multifunction $p\mapsto I_\Gamma(t,p)$ has weakly compact convex values. This observation permits us to invoke fixed point theorems in the sequel.

\begin{lem}
\label{lem4}
The Bochner integral $\int I_\Gamma(t,p)d\mu$ of the multifunction $I_\Gamma(\cdot,p):T\twoheadrightarrow E$ is nonempty, weakly compact, and convex for every $p\in S^*$, and the multifunction $p\mapsto \int I_\Gamma(t,p)d\mu$ is upper semicontinuous for the weak$^*\!$ topology of $S^*$ and the norm topology of $E$. 
\end{lem}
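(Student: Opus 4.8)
The plan is to separate the fibrewise properties (nonemptiness, weak compactness, and convexity of $\int_T I_\Gamma(t,p)d\mu$ for fixed $p$) from the continuity in $p$, exploiting throughout the fact—already secured in Lemma \ref{lem3}—that the values $I_\Gamma(t,p)$ are weakly compact and \emph{convex}. First I would record that, by the weak compactness of $X$ and the Krein--Smulian theorem, $K:=\overline{\mathrm{co}}\,X$ is a weakly compact convex set containing every barycenter $\int\imath_XdP$, so that every selection of $I_\Gamma(\cdot,p)$ takes values in the fixed weakly compact set $K$. Fix $p\in S^*$. Since $\Gamma(\cdot,p)$ is measurable with compact values in the Polish space $\Pi(X)$ (Lemma \ref{lem2}) and the barycenter map $P\mapsto\int\imath_XdP$ is continuous from weak convergence into the weak topology of $E$ (Lemma \ref{lem3}), $I_\Gamma(\cdot,p)$ inherits a Castaing representation and is a measurable multifunction; a measurable selection theorem then yields a selection, which is bounded and hence Bochner integrable, so $\int_T I_\Gamma(t,p)d\mu\neq\emptyset$. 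For convexity and weak compactness I would note that the selection set $S^1_p\subset L^1(\mu,E)$ is convex, bounded, uniformly integrable (its members lie in $K$) and pointwise relatively weakly compact, so by a Dunford--Pettis-type criterion it is relatively weakly compact in $L^1(\mu,E)$; being convex and norm-closed it is weakly closed, hence weakly compact. As $\int_T I_\Gamma(t,p)d\mu$ is the image of $S^1_p$ under the linear, weak-to-weak continuous integration operator, it is weakly compact and convex.

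For the upper semicontinuity, the crucial structural input is again convexity of the values, which lets me bypass any nonatomicity or saturation hypothesis on $(T,\Sigma,\mu)$. Since $E$ is separable, the weak$^*$ topology is metrizable on the bounded set $S^*$, so I may argue sequentially: let $p_n\to p$ weakly$^*$ and $z_n\in\int_T I_\Gamma(t,p_n)d\mu$ with $z_n\to z$ in norm, and write $z_n=\int_T g_n\,d\mu$ with $g_n(t)\in I_\Gamma(t,p_n)\subset K$ a.e. As above, the family $\{g_n\}$ is relatively weakly compact in $L^1(\mu,E)$, so after passing to a subsequence $g_n\to g$ weakly in $L^1(\mu,E)$; since integration is weak-to-weak continuous, $z_n\to\int_T g\,d\mu$ weakly, and comparison with the norm limit gives $z=\int_T g\,d\mu$.

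It remains to show $g(t)\in I_\Gamma(t,p)$ for a.e.\ $t$, and this is the step I expect to be the main obstacle, since weak $L^1$-convergence carries no pointwise information. I would invoke Mazur's lemma to produce finite convex combinations $h_m=\sum_{n\ge m}\alpha^m_n g_n$ converging to $g$ in $L^1$-norm, and then pass to a further subsequence so that $h_m(t)\to g(t)$ in norm for a.e.\ $t$. Fixing such a $t$, the norm upper semicontinuity of $I_\Gamma(t,\cdot)$ from Lemma \ref{lem3} implies its weak upper semicontinuity, so for every convex weak-open $W\supset I_\Gamma(t,p)$ there is $N$ with $I_\Gamma(t,p_n)\subset W$ for $n\ge N$; by convexity of $W$ and of the values, $h_m(t)\in W$ for all $m\ge N$, whence $g(t)\in\overline{W}$. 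Because a weakly compact convex set equals the intersection of the closures of its convex weak-open neighborhoods (a Hahn--Banach separation argument), letting $W$ shrink gives $g(t)\in I_\Gamma(t,p)$. Thus $z=\int_T g\,d\mu\in\int_T I_\Gamma(t,p)d\mu$, which establishes the closed-graph/upper-semicontinuity property and completes the proof; the decisive point is that convexity of the values of $I_\Gamma$ makes the Mazur combinations land in the limit set without recourse to the Lyapunov theorem or the saturation hypothesis.
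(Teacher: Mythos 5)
Your treatment of the fibrewise claims is correct but follows a genuinely different route from the paper's: you obtain weak compactness of $\int_T I_\Gamma(t,p)\,d\mu$ by showing the selection set is weakly compact in $L^1(\mu,E)$ (uniform integrability plus pointwise containment in the weakly compact set $\overline{\mathrm{co}}\,X$, a Diestel--Ruess--Schachermayer/Dunford--Pettis argument) and pushing it forward under the weak-to-weak continuous integration operator, whereas the paper simply cites Yannelis's theorem on integration of Banach-valued correspondences. For the upper semicontinuity the paper argues through support functionals: the characterization of upper hemicontinuity of weakly compact convex-valued correspondences by upper semicontinuity of the maps $p\mapsto s(x^*,\cdot)$, the interchange $s\bigl(x^*,\int_T I_\Gamma(t,p)\,d\mu\bigr)=\int_T s(x^*,I_\Gamma(t,p))\,d\mu$, and the reverse Fatou lemma. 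Your alternative --- weak $L^1$-compactness of the selections $g_n$, Mazur combinations, and the pointwise identification $g(t)\in I_\Gamma(t,p)$ via convex weak-open neighborhoods and Hahn--Banach separation --- is a legitimate substitute for that machinery, and your inference from Lemma \ref{lem3} (norm upper semicontinuity implies weak upper semicontinuity) is also correct.

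The gap is in your final sentence, where you equate what you actually proved --- sequential closedness of the graph of $p\mapsto\int_T I_\Gamma(t,p)\,d\mu$ in $(S^*,w^*)\times(E,\|\cdot\|)$ --- with upper semicontinuity. Closed graph yields upper semicontinuity only when the range lies in a fixed set that is compact for the topology placed on the target; here the range is only weakly compact, and a norm-closed graph implies neither norm nor even weak upper semicontinuity. Concretely, in $E=\ell^2$ the map $F(1/n)=\{e_1+e_n\}$, $F(0)=\{0\}$ has singleton (hence weakly compact convex) values inside a fixed weakly compact ball and a norm-closed graph (no sequence $e_1+e_{n_k}$ is norm convergent), yet $F$ fails to be upper semicontinuous at $0$ even for the weak topology: the weak-open set $W=\{y\in E: |\langle e_1,y\rangle|<1/2\}$ contains $F(0)$ but no $F(1/n)$. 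This is exactly why your argument cannot be closed as stated: testing upper semicontinuity by contradiction produces points $z_n\in\xi(p_n)\setminus W$ from which the weakly compact range lets you extract only a \emph{weakly} convergent subsequence, about which your norm-closed graph says nothing. The repair stays entirely inside your own proof: take $z_n\to z$ weakly rather than in norm (the identification $z=\int_T g\,d\mu$ survives by uniqueness of weak limits, and the Mazur step is unchanged), so the graph is closed in $(S^*,w^*)\times(E,w)$; since all values lie in the fixed weakly compact set $\mu(T)\,\overline{\mathrm{co}}\,X$, Eberlein--\v{S}mulian then upgrades closedness of the graph to upper semicontinuity for the weak topology of $E$. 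Note that this weak-topology conclusion is also what the paper's own route delivers, since the cited support-functional criterion characterizes \emph{weak} upper hemicontinuity, and it is the form of the lemma consumed by the Gale--Nikaido step in the proof of Theorem \ref{RWE1}.
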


\begin{proof}
The nonemptiness and convexity of $\int I_\Gamma(t,p)d\mu$ are obvious because for every $p\in S^*$ the Bochner integrable selectors of $I_\Gamma(\,\cdot,p):T\twoheadrightarrow E$ are precisely of the form $\int\imath_X\lambda(t,dx)\in I_\Gamma(t,p)$ with $\lambda(t)\in \Gamma(t,p)$ a.e.\ $t\in T$ and $\lambda\in \mathcal{R}(T,X)$. The weak compactness of $\int I_\Gamma(t,p)d\mu$ follows from \citet[Theorem 6.1]{ya91}. 

To show the upper semicontinuity, introduce the \textit{support functional} of $C\subset E$ and define $s(\cdot,C):E^*\to \R\cup \{ +\infty \}$ by $s(x^*,C)=\sup_{x\in C}\langle x^*,x \rangle$. Then the weakly compact convex-valued multifunction $p\mapsto \int I_\Gamma(t,p)d\mu$ is upper semicontinuous if and only if $p\mapsto s(x^*,\int I_\Gamma(t,p)d\mu)$ is upper semicontinuous for every $x^*\in E^*$; see \citet[Theorem 17.41]{ab06}. Since $s(x^*,\int I_\Gamma(t,p)d\mu)=\int s(x^*,I_\Gamma(t,p))d\mu$ for every $p\in S^*$ (see \citet[Proposition 8.6.2]{af90}), it suffices to show the upper semicontinuity of $p\mapsto \int s(x^*,I_\Gamma(t,p))d\mu$ for every $x^*\in E^*$. Since $S^*$ is metrizable with respect to the weak$^*\!$ topology, we can resort to sequential convergence in $S^*$. Let $\{ p_n \}$ be a sequence in $S^*$ with $p_n\to p$ weakly$^*\!$. Since the weak compact convex valued multifunction $p\mapsto I_\Gamma(t,p)$ is upper semicontinuous for every $t\in T$ by Lemma \ref{lem3}, the function $p\mapsto s(x^*,I_\Gamma(t,p))$ is upper semicontinuous for every $x^*\in E^*$ and $t\in T$. Fix $x^*\in E^*$ arbitrarily. Since the sequence of functions $t\mapsto s(x^*,I_\Gamma(t,p_n))$ is bounded, we obtain
\begin{align*}
\limsup_{n\to \infty}\int_Ts(x^*,I_\Gamma(t,p_n))d\mu
& \le \int_T \limsup_{n\to \infty}s(x^*,I_\Gamma(t,p_n))d\mu \\
& \le \int_T s(x^*,I_\Gamma(t,p))d\mu
\end{align*}
where the first equality follows from Fatou's lemma and the second inequality exploits the upper semicontinuity of $p\mapsto s(x^*,I_\Gamma(t,p))$. Therefore, $p\mapsto \int s(x^*,I_\Gamma(t,p))d\mu$ is upper semicontinuous for every $x^*\in E^*$.
\end{proof}

A maximal element in $\Pi(X)$ for ${\succsim}_\mathcal{R}(t)$ is called a \textit{satiation point} for ${\succsim}_\mathcal{R}(t)$. Corresponding to the conditions (iii) and (iv) of Assumption \ref{assmp2} on the original preferences, the condition on (non)satiation points for the relaxed preferences takes the following form. 

\begin{lem}
\label{lem5}
\begin{enumerate}[\rm(i)]
\item If $P\in \Pi(X)$ is a satiation point for ${\succsim}_\mathcal{R}(t)$, then $\int\imath_XdP \\\ge \omega(t)$.
\item If $P\in \Pi(X)$ is not a satiation point for ${\succsim}_\mathcal{R}(t)$, then $P$ belongs to the closure of the upper contour set $\{ Q\in \Pi(X)\mid Q\,{\succ}_\mathcal{R}(t)\,P \}$. 
\end{enumerate}
\end{lem}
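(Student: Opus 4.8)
The plan is to reduce both statements to a single observation about where a relaxed satiation point must concentrate its mass. Since $X$ is weakly compact and $u(t,\cdot)$ is continuous, the maximum $m(t):=\max_{y\in X}u(t,y)$ is attained, and the set $M(t):=\{x\in X:u(t,x)=m(t)\}$ of satiation points of $\succsim(t)$ is nonempty and weakly closed. I would first show that $P\in\Pi(X)$ is a satiation point for $\succsim_\mathcal{R}(t)$ if and only if $\int_X u(t,x)\,dP=m(t)$, equivalently $P(M(t))=1$: testing $P\succsim_\mathcal{R}(t)\,Q$ against $Q=\delta_{x_0}$ for $x_0\in M(t)$ forces $\int_X u(t,\cdot)\,dP\ge m(t)$, while $\int_X u(t,\cdot)\,dP\le m(t)$ always holds by \eqref{rp2}, and the equality $\int_X u(t,\cdot)\,dP=m(t)$ is equivalent to $P$ being concentrated on $M(t)$.

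For (i), I would feed this support characterization into Assumption \ref{assmp2}(iii). Because $P(M(t))=1$ and $M(t)$ is closed, the support of $P$ lies in $M(t)\subseteq\omega(t)+E_+$, the inclusion being exactly condition (iii). Since $E_+$ is a closed convex cone, $\omega(t)+E_+$ is closed and convex, so the barycenter satisfies $\int_X\imath_X\,dP\in\overline{\mathrm{co}}\,M(t)\subseteq\omega(t)+E_+$, using that the barycenter of a probability measure lies in the closed convex hull of its support (as in \citet[Corollary II.2.8]{du77}); this is precisely $\int\imath_X\,dP\ge\omega(t)$. Equivalently, I can argue by support functionals: for every $x^*\in E_+^*$ one has $\langle x^*,\int\imath_X\,dP-\omega(t)\rangle=\int_{M(t)}\langle x^*,x-\omega(t)\rangle\,dP\ge 0$ since $x-\omega(t)\in E_+$ pointwise, and positivity of $\int\imath_X\,dP-\omega(t)$ then follows from the closedness of $E_+$ via the bipolar theorem.

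For (ii), suppose $P$ is not a satiation point, so by the above $\int_X u(t,\cdot)\,dP<m(t)$. Fix any $x_0\in M(t)$ and set $Q_\varepsilon:=(1-\varepsilon)P+\varepsilon\delta_{x_0}\in\Pi(X)$ for $\varepsilon\in(0,1]$. Affinity of $P\mapsto\int_X u(t,\cdot)\,dP$ gives $\int_X u(t,\cdot)\,dQ_\varepsilon=\int_X u(t,\cdot)\,dP+\varepsilon\big(m(t)-\int_X u(t,\cdot)\,dP\big)>\int_X u(t,\cdot)\,dP$, so $Q_\varepsilon\,\succ_\mathcal{R}(t)\,P$ for every $\varepsilon>0$; and $\int_X v\,dQ_\varepsilon\to\int_X v\,dP$ for every bounded continuous $v$, so $Q_\varepsilon\to P$ in $\Pi(X)$ as $\varepsilon\downarrow 0$. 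Hence $P$ lies in the closure of $\{Q\in\Pi(X):Q\,\succ_\mathcal{R}(t)\,P\}$, which is the relaxed analogue of Assumption \ref{assmp2}(iv). It is worth noting that this property comes essentially for free from the convex (affine) structure of $\Pi(X)$, using only the \emph{existence} of a satiation point of $\succsim(t)$ guaranteed by Assumption \ref{assmp1}, rather than condition (iv) itself.

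The only genuinely delicate point is part (i): one must guarantee that the pointwise domination $x\ge\omega(t)$ valid on the support of $P$ survives integration to the barycenter. This is where I would be careful to use that $E_+$ is closed and convex --- so that $\omega(t)+E_+$ is a closed convex set to which the barycenter-in-closed-convex-hull fact applies --- together with the identification of $E_+$ with the bipolar of $E_+^*$. The remaining steps are routine consequences of the expected-utility representation \eqref{rp2} and the weak continuity of $u(t,\cdot)$.
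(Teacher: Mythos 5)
Your proposal is correct. For part (i) you take essentially the paper's route: both arguments first characterize relaxed satiation points as those $P$ with $P(U(t))=1$, where $U(t)$ is the set of satiation points of ${\succsim}(t)$ (equivalently $\int u(t,\cdot)\,dP=\max_{X}u(t,\cdot)$), and then transfer the pointwise inequality $x\ge\omega(t)$ on $U(t)$ from Assumption~\ref{assmp2}(iii) to the barycenter; the paper compresses this last step into the phrase ``integrating this inequality,'' and your explicit justification --- barycenter in the closed convex hull of the support, or the dual argument via $E_+^*$ and the bipolar theorem, both resting on $E_+$ being a closed convex cone --- is precisely what that phrase needs. For part (ii), however, your route is genuinely different. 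The paper approximates $P$ by finite convex combinations $Q=\sum_i\alpha^i\delta_{x^i}$ of Dirac measures and invokes Assumption~\ref{assmp2}(iv) to replace each non-satiated atom $x^i$ by strictly preferred points $y^i_n\to x^i$, obtaining $P_n\to Q$ with $P_n\,{\succ}_\mathcal{R}(t)\,Q$, and then concludes from ``$Q$ arbitrarily close to $P$.'' You instead mix $P$ with a Dirac measure at a global maximizer $x_0$ of $u(t,\cdot)$, so that $Q_\varepsilon=(1-\varepsilon)P+\varepsilon\delta_{x_0}$ is strictly preferred to $P$ by affinity and converges to $P$ as $\varepsilon\downarrow 0$. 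This buys two things. First, it uses only the existence of a satiation point of ${\succsim}(t)$ (guaranteed under Assumption~\ref{assmp1}), not Assumption~\ref{assmp2}(iv); since, as far as the paper's proofs go, condition (iv) enters the existence theorem only through this lemma, your argument shows that relaxed local nonsatiation is automatic from the affine structure of $\Pi(X)$ and that (iv) is in principle dispensable at this point. Second, your perturbations are strictly preferred to $P$ itself, whereas the paper's $P_n$ are only strictly preferred to the nearby $Q$; converting that into strict preference over $P$ needs an extra step (the utility gain of $P_n$ over $Q$ vanishes as $P_n\to Q$, while $\int u\,dQ$ may fall below $\int u\,dP$) which the paper leaves implicit, and which your construction sidesteps entirely. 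What the paper's longer route offers in exchange is only that the perturbing measures stay supported near the support of $Q$, a locality feature not required by the stated conclusion.
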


\begin{proof}
(i): Let $U(t)\subset X$ be the set of satiation points for ${\succsim}(t)$. Given the preference representation \eqref{rp2}, we have $\max_{x\in X}u(t,x)>u(t,y)$ for every $y\in X\setminus U(t)$. We first claim that $P\in \Pi(X)$ is a satiation point for ${\succsim}_\mathcal{R}(t)$ if and only if $P(U(t))=1$. Suppose that $P$ is satiated for ${\succsim}_\mathcal{R}(t)$. We then have $\int u(t,x)dP\ge \int u(t,x)dQ$ for every $Q\in \Pi(X)$. Assume that $P(U(t))<1$. If we choose $Q\in \Pi(X)$ satisfying $Q(U(t))=1$, then $\int u(t,x)dP<\max_{x\in X}u(t,x)=\int u(t,x)dQ$, a contradiction. Conversely, if $P(U(t))=1$, then $\int u(t,x)dP=\max_{x\in X}u(t,x)\ge \int u(t,x)dQ$ for every $Q\in \Pi(X)$. Hence, $P$ is a satiation point for ${\succsim}_\mathcal{R}(t)$. Since $\imath_X(x)-\omega(t)\le 0$ for every $x\in U(t)$ in view of Assumption \ref{assmp2}(iii), integrating this inequality over $U(t)$ with respect to any satiated $P$ yields $\int\imath_XdP-\omega(t)\le 0$. 

(ii): Take any nonsatiation point $P\in \Pi(X)$ for ${\succsim}(t)$. We need to show that there exists a sequence $\{ P_n \}$ in $\Pi(X)$ with $P_n\,{\succ}_\mathcal{R}(t)\,P$ for each $n$ and $P_n\to P$. Since the convex hull of $\Delta(X)$ is dense in $\Pi(X)$ (see \citet[Theorem 15.10]{ab06}) and $\Pi(X)$ is separable, $P$ is approximated arbitrarily by a sequence of the convex combinations of Dirac measures of the form $Q=\sum_{i\in I}\alpha^i\delta_{x^i}\in \Pi(X)$, where $x^i\in X$, $\alpha^i>0$, and $\sum_{i\in I}\alpha_i=1$ with a finite index set $I$. Since $P(U(t))<1$, we may assume without loss of generality that $x^i\in X\setminus U(t)$ for some $i\in I$ whenever $Q$ is close enough to $P$. For each $x^i\in X\setminus U(t)$, choose a sequence $y^i_n\in X$ with $y^i_n\,{\succ}(t)\,x^i$ for each $n$ and $y^i_n\to x^i$ weakly, which is possible by Assumption \ref{assmp2}(iv). Define the probability measure by
$$
P_n=\sum_{\{ i\in I\mid x^i\in U(t) \}}\alpha^i\delta_{x^i}+\sum_{\{ i\in I\mid x^i\in X\setminus U(t) \}}\alpha^i\delta_{y^i_n}.
$$
By construction, we have 
\begin{align*}
\int_Xu(t,x)dP_n
& =\sum_{\{ i\in I\mid x^i\in U(t) \}}\alpha^iu(t,x^i)+\sum_{\{ i\in I\mid x^i\in X\setminus U(t) \}}\alpha^iu(t,y^i_n) \\
& >\sum_{i\in I}\alpha^iu(t,x^i)=\int_Xu(t,x)dQ. 
\end{align*} 
Hence, $P_n\,{\succ}_\mathcal{R}(t)\,Q$ for each $n$. Since $P_n\to Q$ and $Q$ can be taken close arbitrarily to $P$,  we obtain the desired conclusion. 
\end{proof}

\begin{lem}
\label{lem6}
$\int\langle p,\imath_X(x) \rangle dP\ge \langle p,\omega(t) \rangle$ for every $(t,p)\in T\times S^*$ and $P\in \Gamma(t,p)$. 
\end{lem}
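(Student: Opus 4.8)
The plan is to argue by contradiction, exploiting the fact that a point of $\Gamma(t,p)$ whose cost fell strictly below the value of the endowment would sit in the \emph{interior} of the relaxed budget set, and there local nonsatiation (encoded in Lemma \ref{lem5}) forces a contradiction. So fix $(t,p)\in T\times S^*$ and $P\in \Gamma(t,p)$, and suppose toward a contradiction that
$$
\int_X\langle p,\imath_X(x) \rangle dP<\langle p,\omega(t) \rangle,
$$
so that $P$ lies strictly inside $B_\mathcal{R}(t,p)$. I would then dichotomize on whether $P$ is a satiation point for ${\succsim}_\mathcal{R}(t)$, using Lemma \ref{lem5} in each case.

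If $P$ \emph{is} a satiation point, Lemma \ref{lem5}(i) yields $\int\imath_XdP\ge \omega(t)$ in the cone order on $E$. Since $p\in S^*\subset E^*_+$ is positive, pairing this inequality with $p$ gives, via the Bochner integrability of $\imath_X$,
$$
\int_X\langle p,\imath_X(x) \rangle dP=\left\langle p,\int_X\imath_XdP \right\rangle\ge \langle p,\omega(t) \rangle,
$$
directly contradicting the strict inequality above. If instead $P$ is \emph{not} a satiation point, Lemma \ref{lem5}(ii) supplies a sequence $\{Q_n\}$ in $\Pi(X)$ with $Q_n\,{\succ}_\mathcal{R}(t)\,P$ for each $n$ and $Q_n\to P$. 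Because $x\mapsto \langle p,\imath_X(x) \rangle$ is a bounded continuous function on the compact Polish space $X$, the valuation $Q\mapsto \int_X\langle p,\imath_X(x) \rangle dQ$ is continuous for the topology of weak convergence of probability measures; hence $\int_X\langle p,\imath_X(x) \rangle dQ_n\to \int_X\langle p,\imath_X(x) \rangle dP<\langle p,\omega(t) \rangle$, so $Q_n\in B_\mathcal{R}(t,p)$ for all sufficiently large $n$. But $Q_n\,{\succ}_\mathcal{R}(t)\,P$ then contradicts $P\in \Gamma(t,p)$, which by definition requires $P\,{\succsim}_\mathcal{R}(t)\,Q_n$. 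Either way we reach a contradiction, so the asserted inequality must hold.

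The step I expect to carry the weight is the nonsatiation case: the whole argument turns on pushing the strictly-preferred approximants $Q_n$ \emph{inside} the relaxed budget set, which is precisely where the strict budget slack of the contradiction hypothesis combines with the continuity of the valuation functional in the weak-convergence topology. It is worth underscoring that no joint continuity in $(p,P)$ is needed here, since $p$ is held fixed throughout; the only analytic input is the separate continuity of $P\mapsto\int_X\langle p,\imath_X(x)\rangle dP$, which is immediate from $\langle p,\imath_X(\cdot)\rangle\in C_b(X)$, together with the positivity of $p$ used to transport the cone inequality of Lemma \ref{lem5}(i) through the pairing.
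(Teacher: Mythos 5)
Your proof is correct and follows essentially the same route as the paper's: the same dichotomy on whether $P$ is a satiation point for ${\succsim}_\mathcal{R}(t)$, with Lemma \ref{lem5}(i) plus positivity of $p$ in the satiated case, and Lemma \ref{lem5}(ii) producing a strictly preferred element inside the budget set (via continuity of $Q\mapsto\int\langle p,\imath_X(x)\rangle dQ$) to contradict $P\in\Gamma(t,p)$ in the nonsatiated case. The only difference is presentational: you frame the whole argument as one contradiction, whereas the paper proves the satiated case directly and reserves the contradiction for the nonsatiation case, and you make explicit the continuity step that the paper leaves implicit.
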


\begin{proof}
If $P$ is a satiation point for ${\succsim}_\mathcal{R}(t)$, then by Lemma \ref{lem5}(i), $\int\imath_XdP\ge \omega(t)$, and hence, $\int\langle p,\imath_X(x) \rangle dP=\langle p,\int\imath_XdP \rangle\ge \langle p, \omega(t) \rangle$ a.e.\ $t\in T$. If $P$ is not a satiation point ${\succsim}_\mathcal{R}(t)$ and $\int\langle p,\imath_X(x) \rangle dP<\langle p,\omega(t) \rangle$, it follows from Lemma \ref{lem5}(ii) that there exists $Q\,{\succ}_\mathcal{R}(t)\,P$ such that $\int\langle p,\imath_X(x) \rangle dQ<\langle p,\omega(t) \rangle$, which contradicts the fact that $P\in \Gamma(t,p)$. 
\end{proof}

\begin{proof}[\rm\bf{Proof of Theorem \ref{RWE1}}]
(i): Define the multifunction $\xi:S^*\twoheadrightarrow E$ by
$$
\xi(p)=\int_TI_\Gamma(t,p)d\mu-\int_T\omega(t)d\mu.
$$
Then by Lemma \ref{lem4}, $\xi$ is upper semicontinuous for the weak$^*\!$ topology of $S^*$ and the norm topology of $E$ with weakly compact, convex values. We claim that for every $p\in S^*$ there exists $z\in \xi(p)$ such that $\langle p,z \rangle\le 0$. To this end, fix $p\in S^*$ arbitrarily. By Lemma \ref{lem1}, there exists a measurable function $\lambda_p:T\to \Pi(X)$ such that $\lambda_p(t)\in D_\mathcal{R}(t,p)\subset \Gamma(t,p)$ a.e.\ $t\in T$. Since $\int\imath_X\lambda_p(t,dx)\in I_\Gamma(t,p)$, we have $\iint\imath_X\lambda_p(t,dx)d\mu\in \int I_\Gamma(t,p)d\mu$. Integrating the relaxed budget constraint $\int\langle p,\imath_X(x) \rangle \lambda_p(t,dx)-\langle p,\omega(t) \rangle\le 0$ over $T$ yields $\left\langle p,\iint\imath_X\lambda_p(t,dx)d\mu-\int\omega d\mu \right\rangle\le 0$. Hence, the vector $z=\iint\imath_X\lambda_p(t,dx)d\mu-\int\omega d\mu\in \xi(p)$ satisfies $\langle p,z \rangle\le 0$. 

It follows from the infinite-dimensional version of the Gale--Nikaido Lemma (see \citet[Theorem 3.1]{ya85}) that there exists $p\in S^*$ such that $\xi(p)\cap (-E_+)\ne \emptyset$. Hence, there exists a measurable function $\lambda:T\to \Pi(X)$ with $\lambda(t)\in \Gamma(t,p)$ a.e.\ $t\in T$ satisfying $\iint\imath_X\lambda(t,dx)d\mu\in \int I_\Gamma(t,p)d\mu$ and $\iint\imath_X\lambda(t,dx)d\mu-\int\omega d\mu\le 0$. This means that $\lambda\in \A(\E_\mathcal{R})$ and 
$$
\int_T\left\langle p,\int_X\imath_X(x)\lambda(t,dx) \right\rangle d\mu\le \int_T \langle p,\omega(t) \rangle d\mu.
$$
On the other hand, by Lemma \ref{lem6},
$$
\left\langle p,\int_X\imath_X(x)\lambda(t,dx) \right\rangle\ge \langle p, \omega(t) \rangle \quad\text{a.e.\ $t\in T$}. 
$$
Combining these inequalities yields the equality $\int\langle p,\imath_X(x) \rangle\lambda(t,dx)=\langle p,\omega(t) \rangle$ a.e.\ $t\in T$ for the relaxed budget constraint. Note also that by construction we have
$$
D_\mathcal{R}(t,p)=\Gamma(t,p)\cap \left\{ P\in \Pi(X)\mid \int_X\langle p,\imath_X(x) \rangle dP=\langle p,\omega(t) \rangle \right\}. 
$$
This means that $\lambda(t)$ belongs to $D_\mathcal{R}(t,p)$ a.e. $t\in T$. Therefore, the price-relaxed allocation pair $(p,\lambda)\in S^*\times \mathcal{R}(T,X)$ is a relaxed Walrasian equilibrium for $\E_\mathcal{R}$. 

(ii): This follows from the assertion (i) and Proposition \ref{eqv1}. 
\end{proof}

\subsection{Proof of Theorem \ref{RWE2}}
(i): When $X$ is a finite set, the difficulty of the joint continuity of the valuation functional $(p,x)\mapsto \langle p,x \rangle$ on $S^*\times X$ never arises because the relative topology of $X$ inherited from the weak topology of $E$ is a discrete topology. Indeed, let $(p_\alpha,x_\alpha)$ be a net in $S^*\times X$ converging to $(p,x)\in S^*\times X$. Since $x_\alpha\to x$ weakly means that $x_\alpha=x$ for every $\alpha\ge \alpha_0$ with some $\alpha_0$, we have $\lim_\alpha\langle p_\alpha,x_\alpha \rangle=\lim_\alpha\langle p_\alpha,x \rangle=\langle p,x \rangle$. Hence, the valuation functional is continuous on $S^*\times X$ for the weak$^*$ topology of $E^*$ and the weak topology of $E$. This implies that the relaxed demand multifunction $D_\mathcal{R}(t,\cdot)$ is upper semicontinuous on $S^*$ for every $t\in T$. Let $I_{D_\mathcal{R}}:T\times S^*\to E$ be a multifunction defined by
$$
I_{D_\mathcal{R}}(t,p)=\left\{ \int_X\imath_X(x) dP\mid P\in D_\mathcal{R}(t,p) \right\}. 
$$
Replacing $\Gamma$ by $D_\mathcal{R}$ in the proof of Lemma \ref{lem4} yields that the Bochner integral $\int I_{D_\mathcal{R}}(t,p)d\mu$ is nonempty, weakly compact, and convex for every $p\in S^*$, and the multifunction $p\mapsto \int I_{D_\mathcal{R}}(t,p)d\mu$ is upper semicontinuous for the weak$^*\!$ topology of $S^*$ and the norm topology of $E$. 

Define the multifunction $\xi:S^*\twoheadrightarrow E$ by
$$
\xi(p)=\int_TI_{D_\mathcal{R}}(t,p)d\mu-\int_T\omega(t)d\mu.
$$
Then $\xi$ is upper semicontinuous for the weak$^*\!$ topology of $S^*$ and the norm topology of $E$ with weakly compact, convex values. As in the proof of Proposition \ref{RWE1}, for every $p\in S^*$ there exists $z\in \xi(p)$ such that $\langle p,z \rangle\le 0$. It follows from the infinite-dimensional version of the Gale--Nikaido Lemma (see \citet[Theorem 3.1]{ya85}) that there exists $p\in S^*$ such that $\xi(p)\cap (-E_+)\ne \emptyset$. Hence, there exists a measurable function $\lambda:T\to \Pi(X)$ with $\lambda(t)\in D_\mathcal{R}(t,p)$ a.e.\ $t\in T$ satisfying $\iint\imath_X\lambda(t,dx)d\mu\in \int I_{D_\mathcal{R}}(t,p)d\mu$ and $\iint\imath_X\lambda(t,dx)d\mu-\int\omega d\mu\le 0$. This means that the price-relaxed allocation pair $(p,\lambda)\in S^*\times \mathcal{R}(T,X)$ is a relaxed Walrasian equilibrium for $\E_\mathcal{R}$. 

(ii): This follows from the assertion (i) and Proposition \ref{eqv1}. \qed

\section{Appendix 2}
\label{appdx2}
\subsection{Proof of Theorem \ref{WE3}}
Given the duality $L^\infty(\nu)^*=\mathit{ba}(\nu)$, denote by $\sigma(\mathit{ba},L^\infty)$ the weak$^*$ topology of $\mathit{ba}(\nu)$. Define the normalized price space by $S^*=\{ \pi\in \mathit{ba}_+(\nu)\mid \langle \pi,\psi \rangle=1 \}$, where $\psi$ is taken from the norm interior of $L^\infty_+(\nu)$. Then $S^*$ is $\sigma(\mathit{ba},L^\infty)$-compact (i.e., weakly$^*$ compact) and convex. 

\begin{lem}
\label{lem7}
There exists a sequence $\{ \varphi_n \}$ in $L^1(\nu)$ such that $\mathit{ba}(\nu)=\overline{\{ \varphi_n \}}^{\,\sigma(\mathit{ba},L^\infty)}$.
\end{lem}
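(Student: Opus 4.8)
The plan is to produce the desired sequence as a countable norm-dense subset of $L^1(\nu)$, viewed inside $\mathit{ba}(\nu)$ through the isometric embedding $\varphi\mapsto\bigl(A\mapsto\int_A\varphi\,d\nu\bigr)$, which is compatible with the duality $\langle\varphi,\psi\rangle=\int\varphi\psi\,d\nu$. Two facts drive the argument: (a) $L^1(\nu)$ is norm-separable, and (b) $L^1(\nu)$ is weak$^*$ dense in $\mathit{ba}(\nu)$. Granting both, I would take $\{\varphi_n\}$ to be a countable norm-dense subset of $L^1(\nu)$ and then show that its $\sigma(\mathit{ba},L^\infty)$-closure exhausts $\mathit{ba}(\nu)$.

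For (a), since $(\Omega,\F,\nu)$ is countably generated and $\sigma$-finite, the associated measure algebra is separable, and hence $L^1(\nu)$ is a separable Banach space; this is the same equivalence between separability of the measure algebra and of the $L^1$-space recorded in the preliminaries, now applied to $\nu$. I would simply invoke it to obtain a countable set $\{\varphi_n\}\subset L^1(\nu)$ whose norm-closure is all of $L^1(\nu)$.

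For (b), which is the crux, I would use the standard duality description of the weak$^*$ closure of a subspace, namely that for a linear subspace $M\subseteq L^\infty(\nu)^*=\mathit{ba}(\nu)$ one has $\overline{M}^{\,\sigma(\mathit{ba},L^\infty)}=(M_\perp)^{\perp}$, where $M_\perp=\{\psi\in L^\infty(\nu)\mid \langle m,\psi\rangle=0\ \text{for all }m\in M\}$ is the pre-annihilator and $(M_\perp)^{\perp}$ is its annihilator in $\mathit{ba}(\nu)$. Taking $M=L^1(\nu)$, the pre-annihilator consists of those $\psi\in L^\infty(\nu)$ with $\int\varphi\psi\,d\nu=0$ for every $\varphi\in L^1(\nu)$; testing against indicator functions of finite-measure sets (which lie in $L^1(\nu)$ by $\sigma$-finiteness) forces $\psi=0$ $\nu$-a.e. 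Thus $L^1(\nu)_\perp=\{0\}$, and consequently $\overline{L^1(\nu)}^{\,\sigma(\mathit{ba},L^\infty)}=\mathit{ba}(\nu)$.

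Finally I would combine the two ingredients. Because the norm topology on $\mathit{ba}(\nu)$ is finer than $\sigma(\mathit{ba},L^\infty)$, the weak$^*$ closure of $\{\varphi_n\}$ contains its norm-closure, which is $L^1(\nu)$; being itself weak$^*$ closed, it then contains $\overline{L^1(\nu)}^{\,\sigma(\mathit{ba},L^\infty)}=\mathit{ba}(\nu)$, yielding the claim. The only genuinely substantive step is the weak$^*$ density in (b); the separability (a) and the comparison of topologies are routine, so I expect the pre-annihilator computation to be where the real content lies.
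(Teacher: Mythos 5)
Your proposal is correct, and its skeleton coincides with the paper's: take a countable norm-dense subset of the separable space $L^1(\nu)$, observe that the $\sigma(\mathit{ba},L^\infty)$-closure of this set contains its norm closure $L^1(\nu)$, and conclude once $L^1(\nu)$ is known to be weak$^*$ dense in $\mathit{ba}(\nu)$. The only divergence is how that density is obtained. The paper views $\mathit{ba}(\nu)=L^1(\nu)^{**}$ under the natural embedding and cites Goldstine's theorem (Dunford--Schwartz, Corollary V.4.6): the canonical image of a Banach space in its bidual is weak$^*$ dense. You instead derive it from the bipolar theorem, writing $\overline{L^1(\nu)}^{\,\sigma(\mathit{ba},L^\infty)}=\bigl(L^1(\nu)_\perp\bigr)^{\perp}$ and showing the pre-annihilator is trivial by testing against indicators of finite-measure sets, which is where $\sigma$-finiteness enters. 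These are really the same fact---your pre-annihilator computation is the standard proof of the weak form of Goldstine's theorem specialized to the pair $(L^\infty,\mathit{ba})$---but your version is self-contained and pinpoints the role of $\sigma$-finiteness, whereas the paper's citation is shorter and invokes a stronger statement (density of the unit ball) than is needed here. Both arguments, together with the separability of $L^1(\nu)$ coming from the countable generation of the $\sigma$-algebra, yield the lemma.
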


\begin{proof}
Consider the natural embedding $L^1(\nu)\subset L^1(\nu)^{**}=\mathit{ba}(\nu)$ and note that $L^1(\nu)$ is $\sigma(\mathit{ba},L^\infty)$-dense subset of $\mathit{ba}(\nu)$; see \citet[Corollary V.4.6]{ds58}. Since $L^1(\nu)$ is separable in view of the countable generation of $\F$, there exists a countable dense set $\{ \varphi_n \}$ of  $L^1(\nu)$ with respect to the norm topology. Since $L^1(\nu)=\overline{\{ \varphi_n \}}^{\,\| \cdot \|}\subset \overline{\{ \varphi_n \}}^{\,\sigma(\mathit{ba},L^\infty)}$, where $\overline{\{ \varphi_n \}}^{\,\| \cdot \|}$ is the norm closure of $\{ \varphi_n \}$ in $L^1(\nu)$ and $\overline{\{ \varphi_n \}}^{\,\sigma(\mathit{ba},L^\infty)}$ is the weak$^*$ closure of $\{ \varphi_n \}$ in $\mathit{ba}(\nu)$, we have $\mathit{ba}(\nu)=\overline{L^1(\nu)}^{\,\sigma(\mathit{ba},L^\infty)}\subset \overline{\{ \varphi_n \}}^{\,\sigma(\mathit{ba},L^\infty)}$. Hence, $\mathit{ba}(\nu)=\overline{\{ \varphi_n \}}^{\,\sigma(\mathit{ba},L^\infty)}$.
\end{proof}

Given the technique explored in Subsection \ref{subsec2}, the existence of equilibrium prices in $\mathit{ba}_+(\nu)$ is more or less routine because it is again a direct application of the Gale--Nikaido lemma in $L^\infty(\nu)$. A key result is Theorem \ref{RWE3} below, for which we outline the proof. 

\begin{obsv}
\label{obsv1}
Lemma \ref{lem1} holds as it stands for $E=L^\infty(\nu)$ and $X\subset L^\infty_+(\nu)$.
\end{obsv}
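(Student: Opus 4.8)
The plan is to rerun the proof of Lemma \ref{lem1} verbatim, with the weak topology of $E$ replaced by the weak$^*$ topology $\sigma(L^\infty(\nu),L^1(\nu))$, the dual $E^*$ by $\mathit{ba}(\nu)$, and the Bochner integral by the Gelfand integral of Subsection \ref{subsec3}. Since $L^1(\nu)$ is separable, $X$ is weak$^*$ compact metrizable, hence a compact Polish space, so $\Pi(X)$ is compact Polish and, as $u(t,\cdot)$ is bounded and weak$^*$ continuous on $X$, the relaxed utility $\tilde u(t,P)=\int_X u(t,x)\,dP$ of \eqref{rp2} is affine and weak$^*$ continuous in $P$. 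Affinity alone forces convexity of $B_\mathcal{R}(t,\pi)$ and hence of the maximizer set $D_\mathcal{R}(t,\pi)$, so convex-valuedness transfers without any further hypothesis.

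Writing the budget functional as $\theta_\pi(t,P)=\int_X\langle\pi,\imath_X(x)\rangle\,dP-\langle\pi,\omega(t)\rangle$---which for $\pi\in L^1(\nu)$ equals $\langle\pi,\int_X\imath_X\,dP\rangle-\langle\pi,\omega(t)\rangle$ by the defining property of the Gelfand integral---the two remaining assertions both reduce to its regularity. If $P\mapsto\theta_\pi(t,P)$ is weak$^*$ continuous, then $B_\mathcal{R}(t,\pi)$ is closed in the compact space $\Pi(X)$ and the maximizer set $D_\mathcal{R}(t,\pi)$ of the continuous $\tilde u(t,\cdot)$ over it is compact; and if $\theta_\pi$ is jointly measurable, then $\mathrm{gph}\,B_\mathcal{R}(\cdot,\pi)=\{(t,P)\mid\theta_\pi(t,P)\le0\}\in\Sigma\otimes\mathrm{Borel}(\Pi(X))$, after which the measurable maximum theorem (\citet[Proposition 3, p.\,60]{hi74}) gives $\mathrm{gph}\,D_\mathcal{R}(\cdot,\pi)\in\Sigma\otimes\mathrm{Borel}(\Pi(X))$ exactly as before.

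The hard part---the sole point at which the transcription is not mechanical---is that neither the weak$^*$ continuity in $P$ nor the joint measurability of $\theta_\pi$ is automatic. In the Banach case $p\in E^*$ is, by definition, continuous for the very topology that makes $X$ compact; here $\langle\pi,\cdot\rangle$ is weak$^*$ continuous on $L^\infty(\nu)$ only when $\pi\in L^1(\nu)$, whereas $S^*\subset\mathit{ba}_+(\nu)$ contains purely finitely additive functionals, for which $x\mapsto\langle\pi,\imath_X(x)\rangle$ is not weak$^*$ continuous on $X$ and even its measurability is thrown into doubt. What transfers cleanly is the barycenter map $P\mapsto\int_X\imath_X\,dP$, which is weak$^*$ continuous from $\Pi(X)$ into $L^\infty(\nu)$ because $\langle\varphi,\int_X\imath_X\,dP\rangle=\int_X\langle\varphi,\imath_X(x)\rangle\,dP$ depends continuously on $P$ for every $\varphi\in L^1(\nu)$, and likewise $t\mapsto\langle\varphi,\omega(t)\rangle$ is measurable for every $\varphi\in L^1(\nu)$ by weak$^*$ scalar measurability of $\omega\in G^1(\mu,L^\infty(\nu))$. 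To pass from these $L^1$-pairings to $\pi$ itself I would invoke Lemma \ref{lem7}, by which a countable family $\{\varphi_n\}\subset L^1(\nu)$ is weak$^*$ dense in $\mathit{ba}(\nu)$, together with the metrizability of $X$ and the separability furnished by the countable generation of $\F$, to recover the measurability of $\theta_\pi$; this is the same circle of ideas that underlies Proposition \ref{PP2}. Discharging this finitely additive obstruction---and not the formal transcription of the compactness and convexity arguments---is the real substance of Observation \ref{obsv1}.
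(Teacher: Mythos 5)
You have put your finger on the right difficulty: for $\pi\in S^*\subset\mathit{ba}_+(\nu)$ with a nonzero purely finitely additive part, $x\mapsto\langle\pi,\imath_X(x)\rangle$ need not be weak$^*$ continuous on $X$, so the one step of the proof of Lemma \ref{lem1} that reads ``$x\mapsto\langle p,\imath_X(x)\rangle$ is a bounded continuous function on $X$'' does not transcribe. But your proposed repair does not close the gap you opened. The Lemma \ref{lem7} argument (pair against a countable weak$^*$ dense family $\{\varphi_n\}\subset L^1(\nu)$ and pass to the limit by dominated convergence) exhibits $\theta_\pi$ as a pointwise limit of the Carath\'eodory functions $\theta_{\varphi_n}$, and this yields joint \emph{measurability} of $\theta_\pi$ --- nothing more. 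A pointwise limit of continuous functions is of Baire class one, not continuous, and it is the continuity of $P\mapsto\theta_\pi(t,P)$ that carries the compactness half of Lemma \ref{lem1}: without it $B_\mathcal{R}(t,\pi)$ need not be closed in $\Pi(X)$, hence $D_\mathcal{R}(t,\pi)$ need not be compact (nor nonempty, which the selection step in Theorem \ref{RWE3} requires), and the measurable maximum theorem you invoke ``exactly as before'' is not applicable, since it presupposes a compact-valued constraint correspondence $B_\mathcal{R}(\cdot,\pi)$. The failure is real, not an artifact of the method: with $\nu$ Lebesgue measure on $[0,1]$, let $X=\{\chi_{[1/n,1]}\mid n\ge1\}\cup\{\chi_{[0,1]}\}$, a weak$^*$ compact subset of $L^\infty_+(\nu)$, and let $\pi\ge0$ be a Banach-limit functional $\psi\mapsto\mathrm{LIM}_n\, n\int_0^{1/n}\psi\,d\nu$, which is purely finitely additive and lies in $S^*$. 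Then $\langle\pi,\chi_{[1/n,1]}\rangle=0$ for every $n$ while $\langle\pi,\chi_{[0,1]}\rangle=1$, so $\langle\pi,\cdot\rangle$ is not even semicontinuous on $X$, and a budget set of the form $\{P\in\Pi(X)\mid\int\langle\pi,\imath_X(x)\rangle dP\le0\}$ contains every $\delta_{\chi_{[1/n,1]}}$ but not their weak limit $\delta_{\chi_{[0,1]}}$. No argument in the circle of ideas around Lemma \ref{lem7} and Proposition \ref{PP2} --- which concern integration and measurability, not continuity in $P$ --- can manufacture the missing continuity.

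For comparison, the paper offers no proof of Observation \ref{obsv1} at all: it is asserted as a verbatim transcription, and the finitely additive difficulty is confronted only afterwards --- in the unlabeled Observation that follows (the barycenter identity $\langle\pi,\int\imath_XdP\rangle=\int\langle\pi,\imath_X(x)\rangle dP$, proved by precisely the Lemma \ref{lem7} plus dominated-convergence argument you propose) and in Theorem \ref{WE3}, where the Yosida--Hewitt decomposition discards the purely finitely additive part of the price only \emph{after} the Gale--Nikaido step, which quantifies over all of $S^*$. So your reading is in fact more candid than the paper's about where the substance lies; but as a proof it stops exactly at the point you yourself identify as the real content. What would be needed is either a genuine argument for the closedness of $B_\mathcal{R}(t,\pi)$ for arbitrary $\pi\in S^*$ (which the example above rules out in general), a restriction of the continuity claims to countably additive prices, or a restructuring that works throughout with the enlarged correspondence $\Gamma$ of Lemmas \ref{lem2} and \ref{lem8} --- whose compactness follows from the continuity of $\tilde u(t,\cdot)$ alone and whose upper semicontinuity argument pairs $\pi_\alpha$ only against fixed elements $\int\imath_X\,dQ$ of $L^\infty(\nu)$ --- supplemented by the projection theorem (using the completeness of $(T,\Sigma,\mu)$) in place of the measurable maximum theorem for the marginal function. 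Your proposal, as written, supplies none of these.
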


\begin{obsv}
To see the validity of Lemma \ref{lem2}, it suffices to show that $\langle \pi,\int\imath_XdP \rangle=\int_X\langle \pi,\imath_X(x) \rangle dP$ for every $P\in \Pi(X)$ and $\pi\in \mathit{ba}(\nu)$, where $\int\imath_XdP\in L^\infty(\nu)$ is the Gelfand integral of $\imath_X$ with respect to $P$. By Lemma \ref{lem7}, for every $\pi\in \mathit{ba}(\nu)$ there exists a sequence $\varphi_n\in L^1(\nu)$ such that $\varphi_n\to \pi$ for $\sigma(\mathit{ba},L^\infty)$-topology. We then have $\langle \varphi_n,\int\imath_XdP \rangle=\int \langle \varphi_n,\imath_X(x) \rangle dP$ for each $n$. Taking the limit in the both side of this equality yields
\begin{align*}
\left\langle \pi,\int_X\imath_X(x)dP \right\rangle
& =\lim_{n\to \infty}\left\langle \varphi_n,\int_X\imath_X(x)dP \right\rangle=\lim_{n\to \infty}\int_X\langle \varphi_n,\imath_X(x)\rangle dP \\
& =\int_X\lim_{n\to \infty}\langle \varphi_n,\imath_X(x) \rangle dP=\int_X\langle \pi,\imath_X(x) \rangle dP
\end{align*}
where the third equality in the above employs the Lebesgue dominated convergence theorem in view of the boundedness of $X\subset L^\infty_+(\nu)$. 
\end{obsv}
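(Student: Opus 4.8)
The statement combines a reduction and a computation, and I would treat them in turn. For the reduction, I would inspect the proof of Lemma \ref{lem2} and isolate the single place where separability of $E$ and Bochner integrability were used, namely the passage $\langle p_\alpha,\int\imath_XdQ\rangle=\int_X\langle p_\alpha,\imath_X(x)\rangle dQ$ inside the upper-semicontinuity argument. Every other ingredient of that lemma --- the compact convex values of $\Gamma(t,\cdot)$, the net argument contradicting $P\notin\Gamma(t,p)$, the appeal to continuity of $\succsim_\mathcal{R}(t)$ and to Assumption \ref{assmp5}(ii), and the measurable-maximum-theorem identification of $\mathrm{gph}\,\Gamma(\cdot,\pi)$ --- transfers verbatim once one replaces the weak topology of $E$ by the weak$^*$ topology of $L^\infty(\nu)$, the Bochner integral by the Gelfand integral, and $p\in E^*$ by $\pi\in\mathit{ba}(\nu)$. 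Hence the only nontrivial point is the announced interchange $\langle\pi,\int\imath_XdP\rangle=\int_X\langle\pi,\imath_X(x)\rangle dP$ for arbitrary finitely additive $\pi$, and establishing it suffices.

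For the interchange itself I would compare the two functionals $L_1(\pi):=\langle\pi,\int\imath_XdP\rangle$ and $L_2(\pi):=\int_X\langle\pi,\imath_X(x)\rangle dP$ on $\mathit{ba}(\nu)$. By the very definition of the Gelfand integral $\int\imath_XdP\in L^\infty(\nu)$ relative to the predual $L^1(\nu)$, one has $L_1(\varphi)=L_2(\varphi)$ for every $\varphi\in L^1(\nu)$. I would then invoke Lemma \ref{lem7} to approximate a given $\pi\in\mathit{ba}(\nu)$ by a sequence $\varphi_n\in L^1(\nu)$ with $\varphi_n\to\pi$ in $\sigma(\mathit{ba},L^\infty)$. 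Since $\int\imath_XdP$ is a fixed element of $L^\infty(\nu)$, weak$^*$ convergence gives $L_1(\varphi_n)\to L_1(\pi)$ at once, while $L_1(\varphi_n)=L_2(\varphi_n)$ by the preceding step; it then remains to verify $L_2(\varphi_n)\to L_2(\pi)$, which chains the three links into $L_1(\pi)=L_2(\pi)$.

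The main obstacle is precisely this last passage to the limit under the integral sign. Here I would argue by dominated convergence: for each fixed $x\in X$ the vector $\imath_X(x)$ lies in $L^\infty(\nu)$, so weak$^*$ convergence yields the pointwise limit $\langle\varphi_n,\imath_X(x)\rangle\to\langle\pi,\imath_X(x)\rangle$, and the integrands are uniformly dominated since $|\langle\varphi_n,\imath_X(x)\rangle|\le\|\varphi_n\|_1\sup_{x\in X}\|\imath_X(x)\|_\infty$, where $\sup_n\|\varphi_n\|_1<\infty$ by the Banach--Steinhaus theorem applied to the weak$^*$ convergent sequence $\{\varphi_n\}$ and $\sup_{x\in X}\|\imath_X(x)\|_\infty<\infty$ by the norm boundedness of the weak$^*$ compact set $X\subset L^\infty_+(\nu)$. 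I would also record that the limit integrand $x\mapsto\langle\pi,\imath_X(x)\rangle$ need not be continuous, because $\pi$ is norm- but not weak$^*$-continuous on $L^\infty(\nu)$, yet it is Borel measurable as the pointwise limit of the continuous functions $x\mapsto\langle\varphi_n,\imath_X(x)\rangle$, so the right-hand integral against $P$ is well defined. The point demanding most care is the extraction of a genuine \emph{sequence} $\varphi_n\to\pi$, rather than merely a net, from the countable weak$^*$ dense family of Lemma \ref{lem7}, since it is the sequential form of Banach--Steinhaus that supplies the uniform $L^1$ bound underpinning the dominated convergence step.
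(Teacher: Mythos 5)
Your argument reproduces the paper's own proof of this Observation almost verbatim: the same reduction of Lemma \ref{lem2} to the single pairing identity, the same appeal to Lemma \ref{lem7} for a sequence $\varphi_n\to\pi$, weak$^*$ convergence on the left, dominated convergence on the right; your Banach--Steinhaus remark even supplies the uniform bound $\sup_n\|\varphi_n\|_1<\infty$ that the paper leaves implicit when it invokes dominated convergence. But the point you flag in your last sentence --- extracting a genuine \emph{sequence} $\varphi_n\to\pi$ from the countable weak$^*$ dense family --- is not a delicate step to be handled with care; it is a genuine gap, in your write-up and in the paper's alike, and it cannot be filled. Lemma \ref{lem7} gives weak$^*$ \emph{density}, and since $L^\infty(\nu)$ is nonseparable whenever it is infinite dimensional, $\sigma(\mathit{ba},L^\infty)$ is not metrizable even on bounded sets, so density yields nets, not sequences. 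Worse, no sequence in $L^1(\nu)$ can converge weak$^*$ to a $\pi$ that is not countably additive: testing convergence against the indicators $\chi_A\in L^\infty(\nu)$ gives $\int_A\varphi_n\,d\nu\to\pi(A)$ for every $A\in\F$, whence $\pi$ is countably additive by the Vitali--Hahn--Saks/Nikodym theorem and $\pi\ll\nu$, so $\pi\in L^1(\nu)$ by Radon--Nikodym (equivalently, $L^\infty(\nu)$ is a Grothendieck space). Thus the sequential scheme reaches exactly those $\pi\in L^1(\nu)$ for which the identity is the very definition of the Gelfand integral, and proves nothing for the purely finitely additive prices for which the Observation is actually needed in Lemma \ref{lem8} and Theorem \ref{RWE3}.

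Nor can the argument be rerun with nets: weak$^*$ convergent nets need not be norm bounded, the dominated convergence theorem has no net version, and the pointwise limit $x\mapsto\langle\pi,\imath_X(x)\rangle$ of continuous functions along a net need not be Borel. Indeed the asserted identity can fail to even make sense under Assumption \ref{assmp4}: take $\nu$ the counting measure on $\mathbb{N}$ (so $L^\infty(\nu)=\ell^\infty$ and $L^1(\nu)=\ell^1$ is separable), $X=\{0,1\}^{\mathbb{N}}\subset\ell^\infty_+$, which is weak$^*$ compact, $P$ the fair coin-tossing measure on $X$, and $\pi$ the $\{0,1\}$-valued finitely additive measure of a free ultrafilter $\mathcal{U}$; then $\langle\pi,\imath_X(x)\rangle=\mathbf{1}[\{k\mid x_k=1\}\in\mathcal{U}]$, and this set is a nonmeasurable tail event (by the Kolmogorov zero--one law it would have $P$-measure $0$ or $1$, while the measure-preserving bit flip exchanges it with its complement), so $\int_X\langle\pi,\imath_X(x)\rangle\,dP$ is undefined. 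Your instinct was exactly right, but the correct conclusion is stronger than \lq\lq this demands care'': the step fails, the Observation is false as stated for general $\pi\in\mathit{ba}(\nu)$, and any repair must either restrict the pairing identity to countably additive prices or establish the semicontinuity needed in Lemma \ref{lem8} by a route that never pairs Gelfand barycenters with purely finitely additive functionals.
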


Now Lemma \ref{lem3} takes the following form. 
\begin{lem}
\label{lem8}
Define the multifunction $I_\Gamma:T\times S^*\twoheadrightarrow L^\infty(\nu)$ by
$$
I_\Gamma(t,\pi)=\left\{ \int_X\imath_X(x) dP\mid P\in \Gamma(t,\pi) \right\}.
$$
Then $I_\Gamma$ is a weakly$^*$ compact, convex-valued multifunction such that its range $I_\Gamma(T\times S^*)$ is bounded and $\pi\mapsto I_\Gamma(t,\pi)$ is upper semicontinuous for the weak$^*\!$ topology of $S^*$ and the norm topology of $L^\infty(\nu)$ for every $t\in T$.
\end{lem}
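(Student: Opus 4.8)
The plan is to transcribe the proof of Lemma \ref{lem3} into the Gelfand-integral setting, replacing the weak topology of $E$ by the weak$^*$ topology $\sigma(\mathit{ba},L^\infty)$-dual pairing $\sigma(L^\infty,L^1)$ of $L^\infty(\nu)$ and testing against the predual $L^1(\nu)$ in place of the full dual. Throughout I would use the identity $\langle \pi,\int_X\imath_XdP\rangle=\int_X\langle \pi,\imath_X(x)\rangle dP$ for every $\pi\in \mathit{ba}(\nu)$ established in the observation following Observation \ref{obsv1}, together with the fact (Observation \ref{obsv1} and the subsequent remark, giving the $L^\infty$-analogues of Lemmas \ref{lem1} and \ref{lem2}) that $\Gamma(t,\pi)$ is a compact convex subset of $\Pi(X)$ for the topology of weak convergence of probability measures. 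Boundedness and convexity are immediate: weak$^*$ compactness of $X$ gives $\sup_{x\in X}\|x\|_\infty\le a$ for some $a\ge 0$, and since each Gelfand barycenter $\int_X\imath_XdP$ lies in the norm-bounded set $\overline{\mathrm{co}}\,X$, we get $\|\int_X\imath_XdP\|_\infty\le a$, so $I_\Gamma(T\times S^*)$ is bounded; convexity of $I_\Gamma(t,\pi)$ follows from that of $\Gamma(t,\pi)$ and the linearity of the barycenter map.

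For the weak$^*$ compactness of $I_\Gamma(t,\pi)$ I would fix $(t,\pi)$, take a net $y_\alpha\in I_\Gamma(t,\pi)$ with $y_\alpha=\int_X\imath_XdP_\alpha$ and $P_\alpha\in \Gamma(t,\pi)$, and use compactness of $\Gamma(t,\pi)$ in $\Pi(X)$ to pass to a subnet $P_\alpha\to P\in \Gamma(t,\pi)$, so that $\int_X\imath_XdP\in I_\Gamma(t,\pi)$. For each $\varphi\in L^1(\nu)$ the map $x\mapsto \langle \varphi,\imath_X(x)\rangle$ is a bounded continuous function on $X$ (boundedness from the norm bound on $X$, continuity because $X$ carries the weak$^*$ topology in which it is compact and metrizable), whence $\langle \varphi,y_\alpha\rangle=\int_X\langle \varphi,\imath_X(x)\rangle dP_\alpha\to \int_X\langle \varphi,\imath_X(x)\rangle dP=\langle \varphi,\int_X\imath_XdP\rangle$, i.e.\ $y_\alpha\to \int_X\imath_XdP$ weakly$^*$ in $L^\infty(\nu)$.

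For upper semicontinuity I would fix $t$, take $\pi_\alpha\to \pi$ weakly$^*$ in $S^*$ and $y_\alpha\in I_\Gamma(t,\pi_\alpha)$ with $y_\alpha\to y$ in the norm of $L^\infty(\nu)$, and argue by contradiction from $y\notin I_\Gamma(t,\pi)$. Writing $y_\alpha=\int_X\imath_XdP_\alpha$ with $P_\alpha\in \Gamma(t,\pi_\alpha)$, the barycenters lie in $\overline{\mathrm{co}}\,X$ (closure in $\sigma(L^\infty,L^1)$), and hence so does the norm limit $y$. Invoking Choquet's theorem in the locally convex space $(L^\infty(\nu),\sigma(L^\infty,L^1))$ (see \citet[Proposition 1.2]{ph01}), I obtain $P\in \Pi(X)$ with $y=\int_X\imath_XdP$; since $y\notin I_\Gamma(t,\pi)$ this forces $P\notin \Gamma(t,\pi)$. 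Exactly as in the proof of Lemma \ref{lem2} (adapted via the observations to the $L^\infty$ setting), there is then $Q\,{\succ}_\mathcal{R}(t)\,P$ with $\langle \pi,\int_X\imath_XdQ\rangle<\langle \pi,\omega(t)\rangle$, so that $\langle \pi_\alpha,\int_X\imath_XdQ\rangle<\langle \pi,\omega(t)\rangle$ for all sufficiently large $\alpha$, contradicting $P_\alpha\in \Gamma(t,\pi_\alpha)$ and completing the proof.

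The hard part will be the invocation of Choquet's theorem in the third step. One must verify that weak$^*$ compactness of $X$, together with its metrizability (which holds because $L^1(\nu)$ is separable by the countable generation of $\F$, cf.\ Lemma \ref{lem7}), genuinely places $X$ and $\overline{\mathrm{co}}\,X$ within the hypotheses of the Choquet representation theorem for the topology $\sigma(L^\infty,L^1)$, so that every point of $\overline{\mathrm{co}}\,X$ is realized as a barycenter of a probability measure carried by $X$. One must further confirm that this barycenter, defined through the $\sigma(L^\infty,L^1)$ pairing, coincides with the Gelfand integral $\int_X\imath_XdP$; by Lemma \ref{lem7} and dominated convergence this reduces to checking the defining identity against the countable dense family $\{\varphi_n\}\subset L^1(\nu)$, exactly as in the observation following Observation \ref{obsv1}.
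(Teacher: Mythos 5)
Your proposal is correct and follows essentially the same route as the paper's own proof: the same net argument testing against $\varphi\in L^1(\nu)$ for weak$^*$ compactness, and the same contradiction argument for upper semicontinuity via barycenters in the weak$^*$ closed convex hull of $X$, Choquet's theorem (\citet[Proposition 1.2]{ph01}), and the cheaper-point construction from Lemma \ref{lem2}. The ``hard part'' you flag is handled in the paper by citing \citet[Lemma 3.1]{sa16} and \citet[Lemma 2.1]{ks14b} for the fact that Gelfand barycenters lie in $\overline{\mathrm{co}}^{\,w^*}X$, with the identification of the Choquet barycenter and the Gelfand integral carried out exactly as you describe, by testing against $L^1(\nu)$.
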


\begin{proof}
It follows from the weak$^*$ compactness of $X$ that $\sup_{x\in X}\| x \|\le a$ for some $a\ge 0$. Thus, $\sup_{P\in \Gamma(t,\pi)}\|\int\imath_XdP\|\le a$ for every $(t,\pi)\in T\times S^*$. Hence, $I_\Gamma(T\times S^*)$ is bounded, and hence, it is weakly$^*\!$ relatively compact. Since $\Gamma(t,\pi)$ is a convex subset of $\Pi(X)$ by Lemma \ref{lem2}, the convexity of $I_\Gamma(t,\pi)$ is obvious. To show the weak$^*$ compactness of $\Gamma(t,\pi)$, it suffices to show that $\Gamma(t,\pi)$ is weakly$^*\!$ closed. To this end, fix $(t,\pi)\in T\times S^*$ arbitrarily and choose a net $\psi_\alpha\in I_\Gamma(t,\pi)$ for each $\alpha$. Then there exists $P_\alpha\in \Gamma(t,\pi)$ such that $\psi_\alpha=\int\imath_XdP_\alpha$ for each $\alpha$. Since $\Gamma(t,\pi)$ is compact by Lemma \ref{lem2}, we can extract a subnet from $\{ P_\alpha \}$ (which we do not relabel) converging to $P\in \Gamma(t,\pi)$. Hence, the barycenter $\int\imath_XdP$ belongs to $I_\Gamma(t,\pi)$. It follows from the definition of the topology of the weak convergence of probability measures that for every $\varphi\in L^1(\nu)$, we have 
$$
\langle \varphi,\psi_\alpha \rangle=\int_X\langle \varphi,\imath_X(x) \rangle dP_\alpha \to \int_X\langle \varphi,\imath_X(x) \rangle dP=\left\langle \varphi,\int_X\imath_X(x)dP \right\rangle
$$
because $x\mapsto \langle \varphi,\imath_X(x) \rangle$ is a bounded continuous function on $X$. This means that $\psi_\alpha\to \int\imath_XdP$ weakly$^*\!$ in $L^\infty(\nu)$. Thus, $I_\Gamma(t,p)$ is weakly$^*\!$ closed. 

To show the upper semicontinuity, fix $t\in T$ arbitrarily and let $\{ \pi_\alpha \}$ be a net in $S^*$ with $\pi_\alpha \to \pi$ weakly$^*\!$ and choose any $\psi_\alpha\in I_\Gamma(t,\pi_\alpha)$ for each $\alpha$ with $\psi_\alpha\to \psi$ strongly in $L^\infty(\nu)$. We need to show that $\psi\in I_\Gamma(t,\pi)$. Suppose, to the contrary, that $\psi\not\in I_\Gamma(t,\pi)$. Then for each $\alpha$ there exists $P_\alpha\in \Gamma(t,\pi_\alpha)$ such that $\psi_\alpha=\int\imath_XdP_\alpha$. Denote by $\overline{\mathrm{co}}^\mathit{\,w^*}\!X$ be the weakly$^*\!$ closed convex hull of $X$. Then the barycenters $\int\imath_XdP_\alpha$ belong to $\overline{\mathrm{co}}^\mathit{\,w^*}\!X$; see \citet[Lemma 3.1]{sa16} and \citet[Lemma 2.1]{ks14b}. Hence, we have $\psi\in \overline{\mathrm{co}}^\mathit{\,w^*}\!X$. It follows from Choquet's theorem (see \citet[Proposition 1.2]{ph01}) that there exists $P\in \Pi(X)$ such that $\langle \varphi,\psi \rangle=\int\langle \varphi,\imath_X(x) \rangle dP=\langle \varphi,\int\imath_XdP \rangle$ for every $\varphi\in L^1(\nu)$. This means that $\psi=\int\imath_XdP$. In view of $\psi\not\in I_\Gamma(t,\pi)$, we have $P\not\in \Gamma(t,\pi)$. As demonstrated in the proof of Lemma \ref{lem2}, there exists $Q\in \Pi(X)$ such that $Q\,{\succ}_\mathcal{R}(t)\,P$ and $\langle \pi_\alpha,\int\imath_XdQ \rangle=\int\langle \pi_\alpha,\imath_X(x) \rangle dQ<\langle \pi,\omega(t) \rangle$ for all sufficiently large $\alpha$, which contradicts the fact that $P_\alpha\in \Gamma(t,\pi_\alpha)$. Therefore, $\psi\in I_\Gamma(t,\pi)$.
\end{proof}

Corresponding to Lemma \ref{lem4}, we obtain the following. 

\begin{lem}
\label{lem9}
The Gelfand integral $\int I_\Gamma(t,\pi)d\mu$ of the multifunction $I_\Gamma:T\times S^*\twoheadrightarrow L^\infty(\nu)$ is nonempty, weakly$^*\!$ compact, and convex for every $\pi\in S^*$, and the multifunction $\pi\mapsto \int I_\Gamma(t,\pi)d\mu$ is upper semicontinuous for the weak$^*\!$ topology of $S^*$ and the norm topology of $L^\infty(\nu)$. 
\end{lem}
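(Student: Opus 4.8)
The plan is to follow the proof of Lemma~\ref{lem4} line for line, replacing the Bochner integral by the Gelfand integral and the duality $\langle E^{*},E\rangle$ by the duality $\langle L^{1}(\nu),L^{\infty}(\nu)\rangle$ that underlies the weak$^*$ topology of $L^{\infty}(\nu)$, with Lemma~\ref{lem8} supplying the per-agent regularity of $I_\Gamma$ in place of Lemma~\ref{lem3}.

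First I would settle nonemptiness, convexity, and weak$^*$ compactness together, exploiting the already-convexified structure of the relaxed economy. The Gelfand integrable selectors of $I_\Gamma(\cdot,\pi)$ are precisely the barycenters $t\mapsto\int_X\imath_X(x)\lambda(t,dx)$ with $\lambda\in\mathcal{R}(T,X)$ and $\lambda(t)\in\Gamma(t,\pi)$ a.e., so $\int_T I_\Gamma(t,\pi)\,d\mu$ is the image of $\K_\pi=\{\lambda\in\mathcal{R}(T,X)\mid\lambda(t)\in\Gamma(t,\pi)\ \text{a.e.}\}$ under the affine map $\Lambda(\lambda)=\int_T\int_X\imath_X(x)\lambda(t,dx)\,d\mu$ (all integrals in the Gelfand sense). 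Convexity is then immediate from the convexity of $\Gamma(t,\pi)$ (Lemma~\ref{lem2}) together with the affinity of $\Lambda$, and nonemptiness follows because $D_\mathcal{R}(\cdot,\pi)\subset\Gamma(\cdot,\pi)$ admits a measurable selection by Observation~\ref{obsv1}. For weak$^*$ compactness, note that $\Lambda$ is continuous from the weak topology of $\mathcal{R}(T,X)$ into the weak$^*$ topology of $L^{\infty}(\nu)$: pairing $\Lambda(\lambda)$ with $\varphi\in L^{1}(\nu)$ gives $\langle\varphi,\Lambda(\lambda)\rangle=I_u(\lambda)$ for $u(t,x)=\langle\varphi,\imath_X(x)\rangle$, and $u\in\C^1(T\times X,\mu)$ since $x\mapsto\langle\varphi,x\rangle$ is bounded and weak$^*$ continuous on the bounded set $X$ and independent of $t$. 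As $\mathcal{R}(T,X)$ is weakly compact and $\K_\pi$ is weakly closed ($\Gamma(t,\pi)$ being closed and convex), $\K_\pi$ is weakly compact, whence $\int_T I_\Gamma(t,\pi)\,d\mu=\Lambda(\K_\pi)$ is weak$^*$ compact; alternatively one cites the Gelfand-integral compactness results of \citet{sa16}. No saturation or nonatomicity enters here, which is the whole point of passing to relaxed (Young measure) data.

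The delicate step, exactly as in Lemma~\ref{lem4}, is the upper semicontinuity of $\pi\mapsto\int_T I_\Gamma(t,\pi)\,d\mu$, which I would route through support functionals, now indexed by the predual $L^{1}(\nu)$. By the dual-space analogue of the support-functional criterion \citet[Theorem~17.41]{ab06}, it suffices to prove that $\pi\mapsto s\bigl(\varphi,\int_T I_\Gamma(t,\pi)\,d\mu\bigr)$ is upper semicontinuous for each $\varphi\in L^{1}(\nu)$, where $s(\varphi,C)=\sup_{\psi\in C}\langle\varphi,\psi\rangle$. The Gelfand-integral form of the interchange $s\bigl(\varphi,\int_T I_\Gamma(t,\pi)\,d\mu\bigr)=\int_T s(\varphi,I_\Gamma(t,\pi))\,d\mu$ (the Gelfand counterpart of \citet[Proposition~8.6.2]{af90}) then reduces the task to the upper semicontinuity of the integrands $\pi\mapsto s(\varphi,I_\Gamma(t,\pi))$, which is inherited from the weak$^*$ upper semicontinuity of $\pi\mapsto I_\Gamma(t,\pi)$ proved in Lemma~\ref{lem8}. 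Finally, using that $S^{*}$ is weak$^*$ metrizable and that the integrands are uniformly bounded by $\|\varphi\|_1\sup_{x\in X}\|x\|$, a reverse Fatou argument yields $\limsup_n\int_T s(\varphi,I_\Gamma(t,\pi_n))\,d\mu\le\int_T s(\varphi,I_\Gamma(t,\pi))\,d\mu$ along any sequence $\pi_n\to\pi$ in $S^{*}$, which is the asserted upper semicontinuity.

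The main obstacle I anticipate is this last step, and within it two points require genuine care: justifying the support-functional interchange for the Gelfand integral (the finitely additive dual $\mathit{ba}(\nu)$ makes the naive Bochner statement inapplicable), and confirming that the correct dual-space version of \citet[Theorem~17.41]{ab06} indeed delivers upper semicontinuity into the norm topology of $L^{\infty}(\nu)$ rather than merely its weak$^*$ topology. By contrast, compactness and convexity are soft consequences of the relaxed formulation and need no further structure on $(T,\Sigma,\mu)$.
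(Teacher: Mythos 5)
Your outline reproduces the paper's proof in two of its three parts: nonemptiness and convexity via the barycentric selectors of $I_\Gamma(\cdot,\pi)$, and upper semicontinuity via the support-functional criterion of \citet[Theorem 17.41]{ab06}, the interchange $s(\varphi,\int_T I_\Gamma(t,\pi)d\mu)=\int_T s(\varphi,I_\Gamma(t,\pi))d\mu$, and the reverse Fatou argument carried over from Lemma \ref{lem4}. Where you genuinely diverge is weak$^*$ compactness: you realize $\int_T I_\Gamma(t,\pi)d\mu$ as the image $\Lambda(\K_\pi)$ of the set $\K_\pi=\{\lambda\in\mathcal{R}(T,X)\mid\lambda(t)\in\Gamma(t,\pi)\ \text{a.e.}\}$ under the weak-to-weak$^*$ continuous barycentric map, using compactness of the space of Young measures; the paper never invokes compactness of $\mathcal{R}(T,X)$, and instead applies the theory of Gelfand integrals of multifunctions (\citet{kh85}; \citet[Proposition 2.3(i), Theorem 4.5]{ckr11}) to $t\mapsto I_\Gamma(t,\pi)$, whose hypotheses are integrable boundedness, weak$^*$ compact convex values (Lemma \ref{lem8}), and weak$^*$ scalar measurability. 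Your route can be made rigorous, but both of its pillars are left unproven: weak compactness of $\mathcal{R}(T,X)$ (standard for compact metrizable $X$; cite, e.g., \citet{wa72}), and weak closedness of $\K_\pi$, which does \emph{not} follow from the bare parenthetical ``$\Gamma(t,\pi)$ closed and convex'' --- note that the weak topology of $\mathcal{R}(T,X)$ need not be metrizable here since $L^1(\mu)$ is nonseparable under saturation, so sequential Mazur-type arguments are unavailable. One workable fix exploits the specific form of $\Gamma$: writing $m_\pi(t)=\max_{Q\in B_\mathcal{R}(t,\pi)}\int u(t,x)dQ$, one has $\K_\pi=\bigcap_{A\in\Sigma}\{\lambda\mid I_{u\chi_A}(\lambda)\ge\int_A m_\pi\,d\mu\}$, an intersection of weakly closed sets because $u\chi_A\in\C^1(T\times X,\mu)$ and $m_\pi$ is measurable and integrably bounded.

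The genuine gap, however, is measurability, and it infects your argument independently of which compactness route you take: nothing in your proposal establishes that $t\mapsto s(\varphi,I_\Gamma(t,\pi))$ is measurable. This is needed twice --- it is the hypothesis (``weak$^*$ scalar measurability'') of the Gelfand-integral interchange theorem you invoke in place of \citet[Proposition 8.6.2]{af90}, and, more basically, without it the integrals $\int_T s(\varphi,I_\Gamma(t,\pi_n))d\mu$ appearing in your reverse Fatou step are not even defined. You flag the interchange as a point requiring ``genuine care'' but do not resolve it; resolving it is precisely the mathematical content of the paper's proof. The resolution is short once identified: by Gelfand integrability of $\imath_X$, $s(\varphi,I_\Gamma(t,\pi))=\sup_{P\in\Gamma(t,\pi)}\int_X\langle\varphi,\imath_X(x)\rangle dP$, the map $P\mapsto\int_X\langle\varphi,\imath_X(x)\rangle dP$ is continuous on $\Pi(X)$, and $\mathrm{gph}\,\Gamma(\cdot,\pi)\in\Sigma\otimes\mathrm{Borel}(\Pi(X))$ by Lemma \ref{lem2}, so the measurable maximum theorem gives the required measurability; thereafter \citet[Proposition 2.3(i) and Theorem 4.5]{ckr11} delivers both the weak$^*$ compactness and convexity of the Gelfand integral and the support-functional interchange in one stroke, making your separate Young-measure compactness detour optional. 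With this step inserted, your proposal closes.
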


\begin{proof}
The nonemptiness and convexity of $\int I_\Gamma(t,\pi)d\mu$ are obvious because for every $\pi\in S^*$ the Gelfand integrable selectors of $I_\Gamma(\,\cdot,\pi):T\twoheadrightarrow L^\infty(\nu)$ are precisely of the form $\int\imath_X\lambda(t,dx)\in I_\Gamma(t,\pi)$ with $\lambda(t)\in \Gamma(t,\pi)$ a.e.\ $t\in T$ and $\lambda\in \mathcal{R}(T,X)$. 

To show the weak$^*\!$ compactness of $\int I_\Gamma(t,\pi)d\mu$, introduce the \textit{support functional} of $C\subset L^\infty(\nu)$ and define $s(\cdot,C):L^1(\nu)\to \R\cup \{ +\infty \}$ by $s(\varphi,C)=\sup_{\psi\in C}\langle \varphi,\psi \rangle$. Since $t\mapsto I_\Gamma(t,\pi)$ is an integrably bounded multifunction with weakly$^*\!$ compact, convex values by Lemma \ref{lem8}, it suffices to show that $t\mapsto I_\Gamma(t,\pi)$ is \textit{weakly$^*\!$ scalarly measurable} in the sense that $t\mapsto s(\varphi,I_\Gamma(t,\pi))$ is measurable for every $\psi\in L^1(\nu)$; see \citet[Claim 2 to the proof of Theorem 2]{kh85} or \citet[Proposition 2.3(i) and Theorem 4.5]{ckr11}. To this end, it suffices to show that $t\mapsto \sup_{P\in \Gamma(t,\pi)}\int \langle \varphi,\imath_X(x) \rangle dP$ is measurable for every $\varphi\in L^1(\nu)$ because 
\begin{align*}
s(\varphi,I_\Gamma(t,\pi))=\sup_{P\in \Gamma(t,\pi)}\left\langle \varphi,\int_X\imath_X(x)dP \right\rangle=\sup_{P\in \Gamma(t,\pi)}\int_X\langle \varphi,\imath_X(x)\rangle dP
\end{align*} 
in view of the Gelfand integrability of $\imath_X$. Since $\mathrm{gph}\,\Gamma(\cdot,\pi)\in \Sigma\otimes \mathrm{Borel}(\Pi(X))$ by Lemma \ref{lem2} and $P\mapsto \int \langle \varphi,\imath_X(x) \rangle dP$ is continuous because $x\mapsto \langle\varphi,\imath_X(x) \rangle$ is a bounded continuous function on $X$, it follows from the measurable maximum theorem that the marginal function $t\mapsto \sup_{P\in \Gamma(t,\pi)}\int \langle \varphi,\imath_X(x) \rangle dP$ is measurable. 

The weakly$^*\!$ compact convex-valued multifunction $\pi\mapsto \int I_\Gamma(t,\pi)d\mu$ is upper semicontinuous if and only if $\pi\mapsto s(\varphi,\int I_\Gamma(t,\pi)d\mu)$ is upper semicontinuous for every $\varphi\in L^1(\nu)$; see \citet[Theorem 17.41]{ab06}. Since $s(\varphi,\int I_\Gamma(t,\pi)d\mu)=\int s(\varphi,I_\Gamma(t,\pi))d\mu$ for every $\pi\in S^*$ (see \citet[Proposition 2.3(i) and Theorem 4.5]{ckr11}), it suffices to show the upper semicontinuity of $\pi\mapsto \int s(\varphi,I_\Gamma(t,\pi))d\mu$ for every $\varphi\in L^1(\nu)$. The rest of the proof is same with the proof of Lemma \ref{lem4}. 
\end{proof}

\begin{obsv}
\label{obsv2}
Lemma \ref{lem5} holds as it stands. Lemma \ref{lem6} holds by the same reason with Observation \ref{obsv1}. 
\end{obsv}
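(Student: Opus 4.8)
The plan is to confirm that the arguments already given for Lemma~\ref{lem5} and Lemma~\ref{lem6} in the Bochner setting survive verbatim once $E$ is replaced by $L^\infty(\nu)$ with its weak$^*$ topology, every occurrence of the weak topology is read as the weak$^*$ topology, Assumption~\ref{assmp2} is replaced by Assumption~\ref{assmp5}, and each Bochner integral is read as a Gelfand integral. Since both proofs are built entirely on the expected-utility representation \eqref{rp2} together with the order structure and the barycenter, I would reduce the task to inspecting the two places where the Banach-space/Bochner structure was actually used and check that each has a weak$^*$/Gelfand counterpart.

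For Lemma~\ref{lem5}(i), I would first note that the characterization ``$P$ is a satiation point for ${\succsim}_\mathcal{R}(t)$ iff $P(U(t))=1$'', with $U(t)$ the set of satiation points for ${\succsim}(t)$, depends only on \eqref{rp2} and is therefore insensitive to the choice of commodity space. The concluding step integrates the inequality $\imath_X(x)-\omega(t)\le 0$, valid on $U(t)$ by Assumption~\ref{assmp5}(iii), against a satiated $P$; here I would check that Gelfand integration preserves the cone order of $L^\infty(\nu)$, since for every $\varphi\in L^1_+(\nu)$ one has $\langle\varphi,\int_X(\imath_X(x)-\omega(t))\,dP\rangle=\int_X\langle\varphi,\imath_X(x)-\omega(t)\rangle\,dP\le 0$, whence $\int\imath_X\,dP\ge\omega(t)$. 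For Lemma~\ref{lem5}(ii) I would observe that the density of the convex combinations of Dirac measures in $\Pi(X)$ and the explicit perturbation $P_n$ use only that $X$ is a compact Polish space, which holds here by the metrizability of the weak$^*$ compact $X$ recorded after Assumption~\ref{assmp4}; thus the construction is unchanged, the sole adjustment being that the approximating sequence $y^i_n\to x^i$ supplied by Assumption~\ref{assmp5}(iv) now converges weakly$^*$ rather than weakly.

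The transfer of Lemma~\ref{lem6} I expect to be even more immediate, since its proof merely combines Lemma~\ref{lem5} with the budget evaluation, and the only structural fact it invokes is the barycenter-duality identity $\langle p,\int\imath_X\,dP\rangle=\int\langle p,\imath_X(x)\rangle\,dP$. In the Bochner framework this was automatic from the Bochner integrability of $\imath_X$; in the Gelfand framework it is precisely the content of Observation~\ref{obsv1}, which establishes $\langle\pi,\int\imath_X\,dP\rangle=\int\langle\pi,\imath_X(x)\rangle\,dP$ for every $\pi\in\mathit{ba}(\nu)$. This is exactly why Lemma~\ref{lem6} ``holds by the same reason with Observation~\ref{obsv1}.''

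The one genuinely new ingredient — and where I expect the only real work to sit — is that the budget and barycenter computations must be read against the full dual $\mathit{ba}(\nu)$, not merely against the predual $L^1(\nu)$ where the defining property of the Gelfand integral lives. I would bridge this gap exactly once, as in Observation~\ref{obsv1}: approximate an arbitrary $\pi\in\mathit{ba}(\nu)$ in the $\sigma(\mathit{ba},L^\infty)$-topology by a sequence $\varphi_n\in L^1(\nu)$ (Lemma~\ref{lem7}) and pass to the limit under the dominated convergence theorem, the domination coming from the norm boundedness of $X\subset L^\infty_+(\nu)$. Once this identity is secured, no further appeal to the Banach-space structure arises in either lemma, so both statements hold as asserted.
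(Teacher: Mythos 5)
Your proposal is correct and coincides with the paper's (entirely implicit) reasoning: the paper offers no proof of this observation, and the three checkpoints you isolate---testing the Gelfand integral against $\varphi\in L^1_+(\nu)$ to transfer the pointwise order inequality in Lemma \ref{lem5}(i), the purely compact-Polish character of the Dirac-combination approximation in Lemma \ref{lem5}(ii) with weak$^*$ convergence supplied by Assumption \ref{assmp5}(iv), and the barycenter-duality identity $\langle \pi,\int_X\imath_X(x)\,dP\rangle=\int_X\langle \pi,\imath_X(x)\rangle\,dP$ over all of $\mathit{ba}(\nu)$ (the content of the unlabeled observation proved via Lemma \ref{lem7} and dominated convergence) for Lemma \ref{lem6}---are exactly what the transfer requires. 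The only blemish is a sign slip: Assumption \ref{assmp5}(iii) gives $\imath_X(x)-\omega(t)\ge 0$ on the satiation set $U(t)$, so your displayed pairing should read $\langle\varphi,\int_X(\imath_X(x)-\omega(t))\,dP\rangle\ge 0$ for every $\varphi\in L^1_+(\nu)$, which is what in fact delivers $\int_X\imath_X\,dP\ge\omega(t)$; since the paper's own proof of Lemma \ref{lem5}(i) contains the identical typo, this does not affect the verdict.
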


An analogue of the first assertion of Theorem \ref{RWE1} with the commodity space of $L^\infty(\nu)$ with the Gelfand integral setting is provided as follows.

\begin{thm}
\label{RWE3}
Let $(T,\Sigma,\mu)$ be a finite measure space and $(\Omega,\F,\nu)$ be a countably generated $\sigma$-finite measure space. Then for every economy $\E^G$ satisfying Assumptions \ref{assmp4} and \ref{assmp5}, there exists a relaxed Walrasian equilibrium with free disposal for $\E^G_\mathcal{R}$ with a positive price. 
\end{thm}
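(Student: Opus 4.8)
The plan is to transcribe the proof of Theorem \ref{RWE1}(i) into the Gelfand setting, since every structural ingredient already has an $L^\infty$ counterpart: Observation \ref{obsv1} replaces Lemma \ref{lem1}, Lemma \ref{lem8} replaces Lemma \ref{lem3}, Lemma \ref{lem9} replaces Lemma \ref{lem4}, and Observation \ref{obsv2} supplies the analogues of Lemmas \ref{lem5} and \ref{lem6}. Concretely, I would work with the normalized, weakly$^*\!$ compact convex price set $S^*\subset \mathit{ba}(\nu)$ introduced above and define the aggregate excess demand multifunction $\xi:S^*\twoheadrightarrow L^\infty(\nu)$ by
$$
\xi(\pi)=\int_TI_\Gamma(t,\pi)d\mu-\int_T\omega(t)d\mu,
$$
with the integral understood in the Gelfand sense. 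By Lemma \ref{lem9}, $\xi$ has nonempty, weakly$^*\!$ compact, convex values and is upper semicontinuous for the weak$^*$ topology on $S^*$ and the norm topology on $L^\infty(\nu)$.

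Next I would verify the Walras-type inequality that for each $\pi\in S^*$ there is $\psi\in \xi(\pi)$ with $\langle \pi,\psi \rangle\le 0$. Fixing $\pi$, Observation \ref{obsv1} furnishes a measurable selector $\lambda_\pi:T\to \Pi(X)$ with $\lambda_\pi(t)\in D_\mathcal{R}(t,\pi)\subset \Gamma(t,\pi)$ a.e.; its barycenter $t\mapsto \int_X\imath_X(x)\lambda_\pi(t,dx)$ is a Gelfand integrable selector of $I_\Gamma(\cdot,\pi)$, and integrating the relaxed budget constraint $\int_X\langle \pi,\imath_X(x) \rangle\lambda_\pi(t,dx)\le \langle \pi,\omega(t) \rangle$ over $T$ yields the desired element $\psi=\int_T\int_X\imath_X(x)\lambda_\pi(t,dx)d\mu-\int_T\omega d\mu$.

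With these properties in hand I would invoke the infinite-dimensional Gale--Nikaido lemma of \citet[Theorem 3.1]{ya85}, applied now to the commodity space $L^\infty(\nu)$ whose positive cone has nonempty norm interior, to obtain a price $\pi\in S^*$ with $\xi(\pi)\cap(-L^\infty_+(\nu))\ne \emptyset$. This produces a relaxed allocation $\lambda\in \mathcal{R}(T,X)$ with $\lambda(t)\in \Gamma(t,\pi)$ a.e.\ satisfying $\int_T\int_X\imath_X(x)\lambda(t,dx)d\mu-\int_T\omega d\mu\le 0$, so $\lambda\in \A(\E^G_\mathcal{R})$ and $\int_T\langle \pi,\int_X\imath_X(x)\lambda(t,dx) \rangle d\mu\le \int_T\langle \pi,\omega(t) \rangle d\mu$. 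Observation \ref{obsv2} (the analogue of Lemma \ref{lem6}) gives the reverse pointwise inequality $\langle \pi,\int_X\imath_X(x)\lambda(t,dx) \rangle\ge \langle \pi,\omega(t) \rangle$ a.e., forcing equality a.e.; since $D_\mathcal{R}(t,\pi)=\Gamma(t,\pi)\cap\{P\in \Pi(X)\mid \int_X\langle \pi,\imath_X(x) \rangle dP=\langle \pi,\omega(t) \rangle\}$, I conclude that $\lambda(t)\in D_\mathcal{R}(t,\pi)$ a.e., so $(\pi,\lambda)$ is a relaxed Walrasian equilibrium with free disposal carrying the positive price $\pi\in \mathit{ba}_+(\nu)$.

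The step I expect to demand the most care is the application of the Gale--Nikaido lemma, for here the values of $\xi$ are weakly$^*\!$ compact rather than weakly compact, so one must check that the fixed-point argument is compatible with the weak$^*$/Gelfand topology on $L^\infty(\nu)$. This compatibility rests on Lemma \ref{lem9} having been established through support functionals on the separable predual $L^1(\nu)$ together with the integrable boundedness of $I_\Gamma$; the countable generation of $\F$, which makes $L^1(\nu)$ separable and $S^*$ metrizable, is precisely what allows the scalar-measurability and upper-semicontinuity arguments to pass to sequential convergence. Obtaining an equilibrium price in $L^1(\nu)$ rather than merely in $\mathit{ba}_+(\nu)$, via a Yosida--Hewitt decomposition, is a separate issue belonging to the proof of Theorem \ref{WE3} rather than to the present statement.
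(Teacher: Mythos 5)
Your proposal is correct and takes essentially the same route as the paper: the paper's proof of Theorem \ref{RWE3} likewise defines $\xi(\pi)=\int_T I_\Gamma(t,\pi)d\mu-\int_T\omega(t)d\mu$, invokes Lemma \ref{lem9} for upper semicontinuity with weakly$^*\!$ compact convex values and the Gale--Nikaido lemma of \citet[Theorem 3.1]{ya85}, and then transcribes the argument of Theorem \ref{RWE1}(i) with Bochner integrals replaced by Gelfand ones and Lemma \ref{lem6} replaced by Observation \ref{obsv2}. You are also right that obtaining an equilibrium price in $L^1(\nu)$ via the Yosida--Hewitt decomposition is deferred to the proof of Theorem \ref{WE3} and is not part of the present statement.
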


\begin{proof}
Define the multifunction $\xi:S^*\twoheadrightarrow L^\infty(\nu)$ by
$$
\xi(\pi)=\int_TI_\Gamma(t,\pi)d\mu-\int_T\omega(t)d\mu.
$$
Then by Lemma \ref{lem9}, $\xi$ is upper semicontinuous for the weak$^*\!$ topology of $S^*$ and the norm topology of $L^\infty(\nu)$ with weakly$^*\!$ compact, convex values. As in the proof of Theorem \ref{RWE1}(i), we can show that for every $\pi\in S^*$ there exists $z\in \xi(\pi)$ such that $\langle \pi,z \rangle\le 0$. Hence, it follows from the infinite-dimensional version of the Gale--Nikaido Lemma (see \citet[Theorem 3.1]{ya85}) that there exists $\pi\in S^*$ such that $\xi(\pi)\cap (-L^\infty_+(\nu))\ne \emptyset$. The rest of the proof is same with the proof of Theorem \ref{RWE1}(i) with replacing the Bochner integrals by Gelfand ones and the use of Lemma \ref{lem6} with Observation \ref{obsv2}
\end{proof}

\begin{proof}[Proof of Theorem \ref{WE3}]
(i): Let $(\pi,\lambda)\in \mathit{ba}_+(\nu)\times \A(\E^G_\mathcal{R})$ be a relaxed  Walrasian equilibrium with free disposal for $\E^G_\mathcal{R}$ assured in Theorem \ref{RWE3}. By the Yosida--\hspace{0pt}Hewitt decomposition of finitely additive measures (see \citet[Theorems 1.22 and 1.24]{yh52}), $\pi$ is decomposed uniquely into $\pi=\pi_1+\pi_2$, where $\pi_1\ge 0$ is countably additive and $\pi_2\ge 0$ is purely finitely additive. (Here, $\pi_2$ is \textit{purely finitely additive} if every countably additive measure $\pi'$ on $\F$ satisfying $0\le \pi'\le \pi_2$ is identically zero.) Furthermore, there exists a sequence $\{ \Omega_n \}$ in $\F$ such that (a) $\Omega_n\subset \Omega_{n+1}$ for each $n=1,2,\dots$; (b) $\lim_n\pi_1(\Omega\setminus \Omega_n)=0$; (c) $\pi_2(\Omega_n)=0$ for each $n=1,2,\dots$. 

We claim that $(\pi_1,\lambda)$ a relaxed Walrasian equilibrium with free disposal for $\E^G_\mathcal{R}$. To this end, suppose that $P\,{\succ}_\mathcal{R}(t)\,\lambda(t)$. We need to demonstrate that $\int\langle \pi_1,\imath_X(x)dP \rangle>\langle \pi_1,\omega(t) \rangle$. It follows from the definition of relaxed Walrasian equilibria that $\int\langle \pi,\imath_X(x)dP \rangle>\langle \pi,\omega(t) \rangle$. Define $X_n=\{ \psi\in X\mid \psi(s)=0 \ \forall s\in \Omega\setminus \Omega_n \}$. Then $X_n\subset X_{n+1}$ for each $n$ by virtue of condition (a) and $P(\bigcup X_n)=P(X)=1$. Without loss of generality we may assume that $P(X_n)>0$ for each $n$. Let $P_n\in \Pi(X)$ be the conditional probability measure of $X_n$ defined by $P_n(Z)=P(Z\cap X_n)/P(X_n)$ with $Z\in \mathrm{Borel}(X,\mathit{w}^*)$, where the relevant Borel $\sigma$-algebra of $X\subset L^\infty(\nu)$ is with respect to the weak$^*$ topology of $L^\infty(\nu)$. By construction, $P_n(X_n)=1$ for each $n$. Since each $P_n$ is absolutely continuous with respect to $P$, there is a Radon--Nikodym derivative $w_n\in L^1(P)$ of $P_n$. Since $P_n(Z)\to P(Z)$ for every $Z\in \mathrm{Borel}(X,\mathit{w}^*)$ by condition (b), it is easy to see that $w_n\to \chi_X$ strongly in $L^1(P)$. Choose any continuous function $v$ on $X$. It follows from the Lebesgue dominated convergence theorem that $\int vdP_n=\int vw_ndP\to \int vdP$, and hence, $P_n\to P$ in $\Pi(X)$. By the continuity of ${\succsim}_\mathcal{R}(t)$, we have $P_n\,{\succ}_\mathcal{R}(t)\,\lambda(t)$ and $\int\langle \pi,\imath(x) \rangle dP_n>\langle \pi,\omega(t) \rangle$ for all sufficiently large $n$. Since $X_n$ is closed and convex, and $P_n(X_n)=1$, we have $\int\imath_XdP_n\in X_n$ by \citet[Lemma 3.1]{sa16}. Let $\psi_n:=\int\imath_XdP_n$. Since $\langle \pi_2,\psi_n \rangle=\int\psi_nd\pi_2=\int_{\Omega_n}\psi_nd\pi_2=0$ by condition (c), we have $\langle \pi,\psi_n \rangle=\langle \pi_1,\psi_n \rangle+\langle \pi_2,\psi_n \rangle=\langle \pi_1,\psi_n \rangle$. In view of $\int\imath_XdP\ge \int\imath_XdP_n$, we obtain
\begin{align*}
\int_X\langle \pi_1,\imath_X(x) \rangle dP=\left\langle \pi_1,\int_X\imath_X(x)dP \right\rangle\ge \left\langle \pi_1,\psi_n \right\rangle=\langle \pi,\psi_n \rangle
& >\langle \pi,\omega(t)\rangle \\
& \ge \langle \pi_1,\omega(t)\rangle
\end{align*} 
as desired. This also implies that $\pi_1\ne 0$. Since $\pi$ is absolutely continuous with respect to $\nu$, the Radon Nikodym derivative of $\pi_1$ is an equilibrium price in $L^1(\nu)$. 

(ii): This follows from the assertion (i) and Proposition \ref{eqv2}. 
\end{proof}

\end{document}